\newcommand {\mm}[1] {\ifmmode{#1}\else{\mbox{\(#1\)}}\fi}
\newcommand {\floor}[1] {{\left\lfloor #1 \right\rfloor}}
\newcommand{\abs}[1]       {\mm{\left|{#1}\right|}}
\newsavebox{\smallProofsym}                            
\long\def\@makecaption#1#2{%
  \vskip\abovecaptionskip
  \sbox\@tempboxa{\small #1: #2}%
  \ifdim \wd\@tempboxa >\hsize
    \small #1: #2\par
  \else
    \global \@minipagefalse
    \hb@xt@\hsize{\hfil\box\@tempboxa\hfil}%
  \fi
  \vskip\belowcaptionskip}
\newcommand{\Rspace}        {\mm{{\mathbb R}}}
\newcommand{\barRspace}     {\mm{\bar{\mathbb R}}}
\newcommand{\Zspace}        {\mm{{\mathbb Z}}}
\newcommand{\filter}        {\mm{F}}
\newcommand{\filteraux}     {\mm{G}}
\newcommand{\MTree}         {\mm{{\mathbb M}}}
\newcommand{\perMTree}[2]   {\mm{{\mathbb M}{({#1},{#2})}}}
\newcommand{\MTreeIndex}    {\mm{\MTree_\textrm{index}}}
\newcommand{\NTree}         {\mm{{\mathbb N}}}
\newcommand{\OTree}         {\mm{{\mathbb O}}}
\newcommand{\BCode}         {\mm{{\mathbb B}}}
\newcommand{\perBCode}[2]   {\mm{{\mathbb B}_{0}{({#1},{#2})}}}
\newcommand{\slSpace}[2]    {\mm{[{#2}]}}
\newcommand{\Height}        {\mm{{\tt{h}}}}
\newcommand{\Frequency}     {\mm{{\tt{f}}}}
\newcommand{\Shadow}[2]     {\mm{{\tt{m}}_{#1}{({#2})}}}
\newcommand{\Monomial}      {\mm{{\mathcal M}}}
\newcommand{\multiplicity}  {\mm{\mu}}
\newcommand{\GCD}[2]        {\mm{{\rm gcd}{({#1},{#2})}}}
\newcommand{\bigOh}[1]      {\mm{\mathcal{O}({#1})}}
\newcommand{\Wasser}[3]     {\mm{{W_{#1}{({#2},{#3})}}}}
\newcommand{\Wasserpm}[3]   {\mm{{W_{#1}^{\pm}{({#2},{#3})}}}}
\newcommand{\volume}[2]     {\mm{{{\rm vol}_{#1}\,}{#2}}}
\newcommand{\Span}[1]       {\mm{{\rm span}{({#1})}}}
\newcommand{\support}[1]    {\mm{{\rm supp}{({#1})}}}
\newcommand{\Tplan}         {\mm{T}}
\newcommand{\Xplan}         {\mm{X}}
\newcommand{\Yplan}         {\mm{Y}}
\newcommand{\card}[1]       {\mm{{\#}{#1}}}
\newcommand{\dime}[1]       {\mm{\rm dim\,}{#1}}
\newcommand{\degree}[1]     {\mm{\rm deg\,}{#1}}
\newcommand{\norm}[1]       {\mm{\|{#1}\|}}
\newcommand{\normtwo}[1]    {\mm{\|{#1}\|_2}}
\newcommand{\normop}[2]     {\mm{\|{#1}\|_{\rm op}^{#2}}}
\newcommand{\Maxdist}[2]    {\mm{\|{#1}-{#2}\|_\infty}}
\newcommand{\Onedist}[2]    {\mm{\|{#1}-{#2}\|_1}}
\newcommand{\Idist}[2]      {\mm{{I}{({#1},{#2})}}}
\newcommand{\Jdist}[2]      {\mm{{J}{({#1},{#2})}}}
\newcommand{\ee}            {\mm{\varepsilon}}
\newcommand{\aaa}           {\mm{\alpha}}
\newcommand{\bbb}           {\mm{\beta}}
\newcommand{\ccc}           {\mm{\gamma}}
\newcommand{\AAA}           {\mm{\mathrm A}}
\newcommand{\BBB}           {\mm{\mathrm B}}
\newcommand{\CCC}           {\mm{\Gamma}}
\newcommand{\Root}[1]       {\mm{{\rm Root}{({#1})}}}
\newcommand{\Size}[1]       {\mm{{\rm Size}{({#1})}}}
\newcommand{\Shift}[1]      {\mm{{\rm Shift}{({#1})}}}
\newcommand{\Drift}[1]      {\mm{{\rm Drift}{({#1})}}}
\newcommand{\Next}[1]       {\mm{{\rm Next}{({#1})}}}
\newcommand{\Basis}[1]      {\mm{{\rm Basis}{({#1})}}}
\newcommand{\Oldest}[1]     {\mm{{\rm Old}{({#1})}}}
\newcommand{\Skip}[1]       {}
\title{Merge Trees of Periodic Filtrations}
\authorrunning{Herbert Edelsbrunner, Teresa Heiss}
\author{Herbert Edelsbrunner}{ISTA (Institute of Science and Technology Austria), Kloster\-neu\-burg, Austria}{Herbert.Edelsbrunner@ist.ac.at}{https://orcid.org/0000-0002-9823-6833}{}
\author{Teresa Heiss}{ISTA (Institute of Science and Technology Austria), Kloster\-neu\-burg, Austria}{Teresa.Heiss@ist.ac.at}{https://orcid.org/0000-0002-1780-2689}{}
\keywords{lattices, periodic sets, 
merge tree, persistent homology, union-find data structure, Euclid's gcd algorithm, interleaving distance, quotient pseudo-metric, $1$-Wasserstein distance, optimal transport.}
\begin{document}
\maketitle

\begin{abstract}
  Motivated by applications to crystalline materials, we generalize the merge tree and the related barcode of a filtered complex to the periodic setting in Euclidean space.
  They are invariant under isometries, 
  changing bases, and indeed changing lattices.
  In addition, we prove stability under perturbations and provide an algorithm that under mild geometric conditions typically satisfied by crystalline materials takes $\bigOh{(n+m) \log n}$ time, in which $n$ and $m$ are the numbers of vertices and edges in the quotient complex, respectively.
\end{abstract}

\section{Introduction}
\label{sec:1}

The main objective of this paper is the extension of persistent homology to periodic data in Euclidean space.
Such data frequently arises in material science, in particular the study of crystalline materials, and we refer to Fedorov~\cite{Fed91} for an early attempt to classify such structures.
Modeling the atoms of a crystalline material as points leads to the mathematical concept of a \emph{periodic set}, which we define as the Minkowski sum of a finite set and a lattice; see e.g.\ \cite{Cas97,Zhi15}.
We study the case in which the periodic data is given within a finite parallelepiped; that is: the \emph{unit cell} of the lattice.

\smallskip
Persistent homology describes the connected components and holes of shapes that are filtered, meaning their parts appear at different times.
If this shape is a complex---perhaps the Delaunay triangulation of a point set---there are various ways to derive a filter, including the radius function on the Delaunay triangulation; see e.g.\ \cite[Chapter III.4]{EdHa10}.
Taking the persistence diagram of this function, we get a summary of the topological connectivity on all scales.
In the last two decades, this pipeline of going from the data to a filtered complex and further to the persistence diagram has become a popular tool in the topological approach to data analysis; see e.g.\ \cite{Car09,Wei11}.
While periodic data is abundant and would benefit from such an analysis, persistent homology has not yet been properly extended to periodic sets.
This paper takes a step in this direction, by generalizing the $0$-dimensional aspects of persistent homology to the periodic setting.
We list the specific contributions of this paper:
\smallskip \begin{itemize}
  \item the introduction of the \emph{shadow monomial} as the crucial ingredient to generalize the merge tree and the related $0$-th barcode or persistence diagram to the periodic setting;
  \item an algorithm that under usually satisfied conditions constructs the periodic merge tree and $0$-th barcode in $\bigOh{(n+m) \log n}$ time, with $n$ and $m$ the number of vertices and edges in a unit cell;
  \item the proof of invariance under changing the underlying lattice, both for periodic $0$-th barcodes and for the classes of periodic merge trees defined by the splintering equivalence relation;
  \item the extension of the Wasserstein distance between barcodes and the interleaving distance between merge trees to the periodic setting, and proofs of stability for both extensions.
\end{itemize} \smallskip
Our method can be applied to data in any dimension that can be viewed as a periodic graph with a periodic filter on it.
Similar to classical persistent homology, this includes but is not limited to
\smallskip \begin{itemize}
  \item \emph{locally finite periodic sets}, which generate filtered complexes (e.g.\ possibly weighted Delaunay) by growing balls centered at the points and monitoring when and where they overlap;
  \item \emph{periodic digital images}, in which the grayscale or color values in the pixels provide a filter, and the graph could either be the edge-skeleton of the cubical complex, or its dual;
  \item \emph{periodic functions} (e.g.\ piecewise linear or smooth), for which the graph structure and filter are induced by the critical points and their values at which the sublevel set alters its topology.
\end{itemize} \smallskip
Even aperiodic materials or the aperiodic universe are often simulated with enforced periodic boundary conditions to prevent boundary effects from contaminating the results gathered within the bulk. 

\smallskip
Prior approaches to persistent homology for periodic data relied primarily on one of two heuristics: work with a sufficiently large finite subset of the data, e.g.\ \cite{HNHEMN16,KHM20}, or compactify using the torus topology, which is a classic mathematical concept and supported by modern software, including the implementations of Delaunay triangulations in CGAL \cite{CGAL}.
The first heuristic suffers from unwanted boundary effects and unnecessarily high computation costs, while the latter loses information about the patterns in the periodic setting.
Our approach is related to both and reaps the benefits of the compactness of the torus while carefully recording the relation to the infinite periodic setting; see Figure~\ref{fig:snakes_overview}.
Indeed, each connected component on the torus has preimages (shadows) in the Euclidean space---perhaps infinitely many---and similar to Onus and Robins~\cite{OnRo22}, we count these preimages but in a fine-grained manner that keeps track of growth-rates as well as densities (exponents and coefficients of the shadow monomials).
The growth-rate distinguishes between connected clumps, strings, sheets, and blocks, while the density quantifies how densely these are distributed.
Both pieces of information are needed for detailed statements about how materials are composed.
The periodicity lattice---which is used in the definition of shadow monomials---has been introduced before under different names, and used to compute $0$-homology \cite{CoMe91,DiDu89,OnRo22}.
However, using it to quantify the growth-rate and density of the components is to the best knowledge of the authors new.

\begin{figure}[htp]
  \centering
  \includegraphics[width=0.39\linewidth]{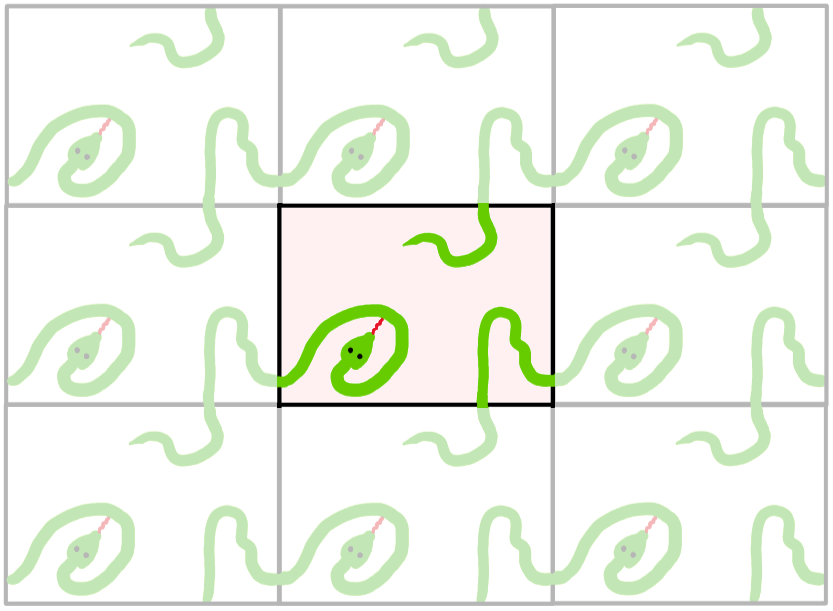}\hspace{0.3in}\includegraphics[width=0.39\linewidth]{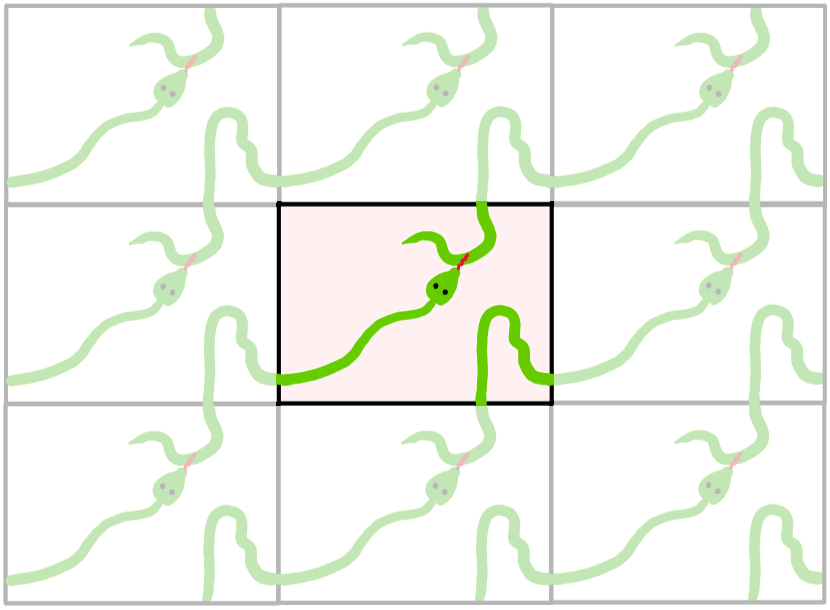}
  \caption{\footnotesize In a unit cell with periodic boundary conditions (the torus), we see a single snake that bites itself, both in the \emph{left} and the \emph{right panel}.
  There is however a significant difference in the periodically tiled plane, since the snakes on the \emph{right} connect in infinite diagonal lines, while the snakes on the \emph{left} remain isolated, a distinction we will quantify with the novel concept of a \emph{shadow monomial}.
  The material properties of the two examples would indeed be rather different, with higher resistance to tearing on the \emph{right}.}
  \label{fig:snakes_overview}
\end{figure}

\smallskip
Shifting the focus from counting components to recording when components merge, we mention the work of Ingrid Hotz and her group \cite{TASMH23}.
In collaborations with domain scientists, they study the electron density of $3$-dimensional layered materials, and found that the timing of the merge events is useful in the quick comparison of materials.
They compromise the (infinite) periodic setting by considering merge events within an array of $2 \times 2 \times 2$ unit cells with periodic boundary conditions.
Our method removes the restriction to a finite portion of the data and provides extra information, such as how many and which components join at a merge event.

\smallskip \noindent \textbf{Outline.}
Section~\ref{sec:2} extends the concept of a merge tree to the periodic setting.
Section~\ref{sec:3} explains the algorithm for constructing periodic merge trees.
Section~\ref{sec:4} proves the invariance and stability of equivalence classes of periodic merge trees.
Section~\ref{sec:5} introduces the periodic $0$-th barcode and proves invariance and stability.
Section~\ref{sec:6} illustrates the concepts using a $3$-dimensional periodic graph as an example.
Section~\ref{sec:7} concludes this paper.

\section{The Periodic Merge Tree}
\label{sec:2}

This paper is written for arbitrary but fixed dimension, $d$, with $d=3$ being the most important case for applications, including to crystalline materials.
We limit ourselves to the discrete setting of complexes or graphs, and stress that the ideas also apply to smooth and piecewise smooth functions.

\subsection{Merge Tree}
\label{sec:2.1}

We begin with standard definitions.
A \emph{cell complex}, $K$, in $\Rspace^d$ consists of points and cells that have these points as vertices.
We require that each cell, $\tau$, be homeomorphic to the $p$-dimensional closed unit ball, for some $0 \leq p \leq d$, and the boundary of $\tau$ is a union of lower-dimensional cells in the complex, called \emph{faces} of $\tau$.
In addition, we require that intersections of cells in the complex are either empty or a union of cells in their shared boundaries.
Depending on the context, we treat $K$ as a combinatorial object (a collection of cells with face relations), or as a geometric object (the set of points in the union of these cells).
We allow for infinitely many points and cells but require that $K$ be locally finite.
The points are referred to as the \emph{vertices}, and the $1$-dimensional cells as the \emph{edges} of $K$.
The complex is \emph{connected} if the graph that consists of its vertices and edges is connected.
Since this paper is primarily concerned with $0$-dimensional homology---whose classes correspond to the connected components---we will mostly ignore cells of dimension $2$ and higher.
After removing such cells, we are left with a graph consisting of vertices and edges, allowing for edges that start and end at the same vertex, and multiple edges connecting the same two vertices.
However, in anticipation of an extension of our methods to higher homology dimensions, and because the overhead is modest, we nonetheless write this paper using the more general terminology of complexes.

\smallskip
A \emph{filter} of $K$ is a function $\filter \colon K \to \Rspace$ that satisfies $\filter(\sigma) \leq \filter(\tau)$ whenever $\sigma$ is a face of $\tau$.
Given $t \in \Rspace$, the corresponding \emph{sublevel set} consists of all cells with value at most $t$, denoted $K_{t} = \filter^{-1} (-\infty, t]$.
By definition of filter, $K_{t} \subseteq K$ is itself a complex and thus a \emph{subcomplex} of $K$.
More generally, $K_s$ is a subcomplex of $K_t$, whenever $s \leq t$, which implies that every connected component of $K_{s}$ includes into a connected component of $K_{t}$.
The ordered sequence of different sublevel sets is the \emph{filtration} defined by $\filter$.
Think of the sublevel set as an object that evolves as the threshold grows continuously.
With this picture in mind, the merge tree of $\filter$ keeps track of the connected components and how they merge.
Intuitively, this tree has a point for each connected component of each sublevel set.
We draw the tree from left to right, nevertheless calling the threshold for the sublevel set the \emph{height} of the point.
As long as a connected component stays the same or just grows, we draw the point along a left-to-right trajectory, but when components merge, then the corresponding trajectories meet, and afterwards continue as a single trajectory again in left-to-right direction.
Since trajectories meet but never bifurcate, and---assuming $K$ is connected---eventually combine to a single trajectory, we indeed have a tree, as suggested by the name.
The number of points in a vertical section of this tree is the number of connected components of the corresponding sublevel set, and the connectivity in the left-to-right direction reflects the inclusion between connected components in different sublevel sets; see Figure~\ref{fig:TwoTreesOne}, which shows two merge trees of the same periodic set.
Each tree is drawn as a collection of horizontal intervals, called \emph{beams}, which we connect with short vertical line segments to represent the merging of two components.
\begin{figure}[hbt]
  \centering \vspace{0.0in}
  \resizebox{!}{1.75in}{\input{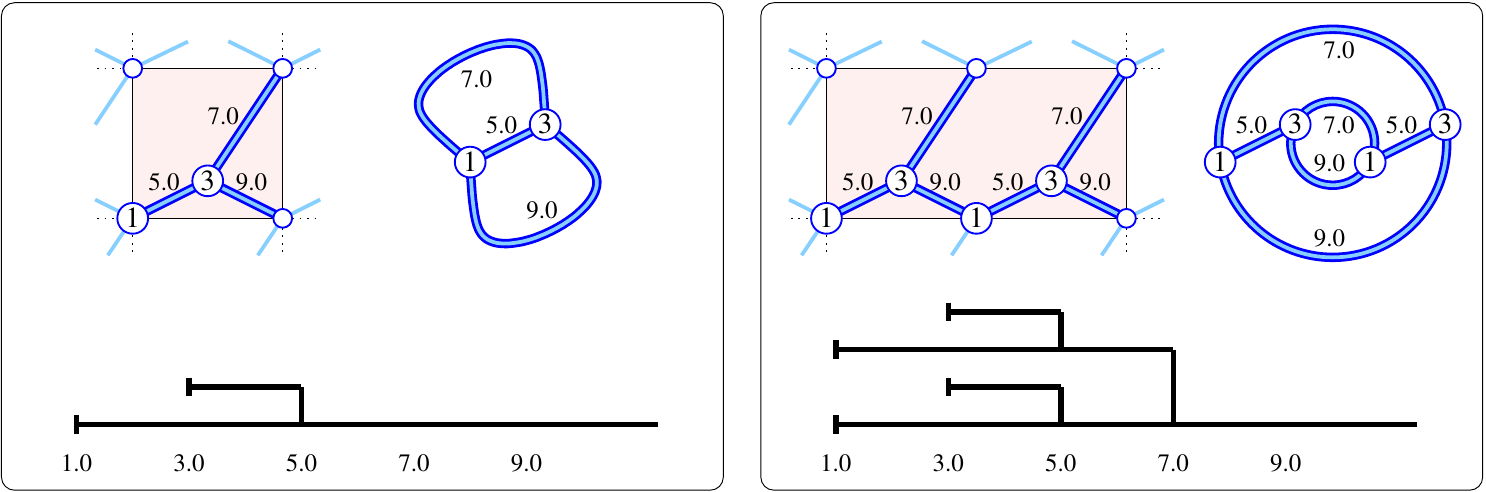_t}}
  \caption{\footnotesize \emph{Left panel:} a periodic graph with two vertices and three edges inside a unit cell in the shape of the unit square in the \emph{upper left portion}, and the corresponding graph in the $2$-dimensional torus in the \emph{upper right portion of the panel}.
  The filter maps the vertices to their (real) labels and the edges to the values shown, which defines the merge tree at the \emph{bottom in the panel}.
  \emph{Right panel:} the same periodic graph as in the \emph{left panel}, but now represented by a sublattice with a rectangular unit cell of twice the area.
  Correspondingly, the graph in the $2$-dimensional torus has twice the number of vertices and edges, and the merge tree is richer than in the \emph{left panel}.}
  \label{fig:TwoTreesOne}
\end{figure}

\smallskip
Note the asymmetry, in which we always draw the vertical segment from the shorter to the longer beam (comparing their lengths to the left of the bifurcation point).
As a result, the shorter beam ends, and the longer beam continues without interruption to the right.
While not being part of the definition of merge tree, this asymmetry will come handy when we transition from merge trees to barcodes in Section~\ref{sec:5}.
We call this the \emph{elder rule} according to which the younger component gets absorbed into the older component; see \cite[page~150]{EdHa10}.
Ties are broken arbitrarily, and we will see in Section~\ref{sec:5} that different ways to break a tie does not affect the barcode.

\smallskip
To give a more formal definition of the merge tree, we introduce the \emph{sublevel space} of $\filter$, denoted $\slSpace{K}{\filter}$, which consists of all points $(x, s) \in K \times \Rspace$ with $x \in K_{s}$,
in which $x$ may be a vertex or a point on an edge or higher-dimensional cell in $K$.
Call two of its points \emph{equivalent}, written $(x,s) \sim (y,t)$, if $s=t$ and $x, y$ belong to the same connected component of $K_{s} = K_{t}$.
The \emph{quotient}, denoted $\slSpace{K}{\filter}_\sim$, effectively combines all equivalent points into one and inherits the \emph{quotient topology} from the Euclidean topology of $\Rspace^{d+1}$ restricted to $\slSpace{K}{\filter} \subseteq \Rspace^{d+1}$.
\begin{definition}[Merge Tree]
  \label{dfn:merge_tree}
  The \emph{merge tree} of a filter $\filter \colon K \to \Rspace$ is the quotient of the sublevel space of $\filter$, denoted $\MTree{} = \MTree{(\filter)} = \slSpace{K}{\filter}_\sim$, together with the corresponding quotient topology.
  It is equipped with the \emph{height function}, $\Height \colon \MTree \to \Rspace$, which maps each point to the value of the sublevel set in which the point is a connected component.
\end{definition}
Whenever we talk about a merge tree, we will tacitly assume that it comes 
with a height function.
We will furthermore simplify language by saying that points $\aaa$ and $\bbb$ of the merge tree not only represent but in fact \emph{are} connected components of their respective sublevel sets, namely of $K_{\Height(\aaa)}$ and $K_{\Height(\bbb)}$, respectively.
We say $\bbb$ \emph{covers} $\aaa$ if $\Height(\aaa) \leq \Height(\bbb)$ and $\aaa \subseteq \bbb$.

\subsection{Periodic Setting}
\label{sec:2.2}

We are specifically interested in filters that are periodically repeated on a complex that consists of periodically arranged copies of a finite complex.
The natural language for such a setting is that of lattices in Euclidean space, discussed e.g.\ in \cite{Cas97,Zhi15}.
Let $u_1, u_2, \ldots, u_p$ be linearly independent vectors in $\Rspace^d$.
The set of all integer combinations is the \emph{lattice} spanned by these vectors:
\begin{align}
  \Lambda &= \Lambda (u_1, u_2, \ldots, u_p)
           = \left\{ \lambda_1 u_1 + \lambda_2 u_2 + \ldots + \lambda_p u_p \mid \lambda_i \in \Zspace \mbox{\rm ~for~} 1 \leq i \leq p \right\} ,
\end{align}
and we call $p = \dime{\Lambda}$ the \emph{dimension} of the lattice.
The \emph{span} of the vectors, denoted $\Span{\Lambda}$, is the set of all real combinations of the vectors, which is a $p$-dimensional linear subspace of $\Rspace^d$.
Similarly, the \emph{unit cell} of the vectors is the set of real combinations with coefficients in $[0,1)$.
The same lattice can be spanned by different sets of vectors, but the dimension, the span, and the ($p$-dimensional) volume of the unit cell are always the same.
We write $\volume{p}{\Lambda}$ for the latter and note that for $p=d$, it is the determinant of the matrix whose columns are the vectors.
Assuming $\dime{\Lambda} = d$, we write $U$ for the $d \times d$ matrix whose columns are the basis vectors.
Since $U^{-1} U$ is the identity matrix, $U^{-1}$ maps $\Lambda$ to the standard integer lattice, $\Zspace^d$, and $U^{-1}$ applied to a lattice vector gives the integer coordinates of that vector with respect to the basis given by $U$.

\smallskip
Given a $d$-dimensional lattice $\Lambda$, a locally finite complex, $K \subseteq \Rspace^d$, is \emph{$\Lambda$-periodic} if $\sigma \in K$ and $u \in \Lambda$ implies $\sigma + u \in K$. 
Similarly, a filter on a $\Lambda$-periodic complex $K$ is a \emph{$\Lambda$-periodic} filter, $\filter \colon K \to \Rspace$, if it satisfies $\filter(\sigma) = \filter(\sigma + u)$ for every $\sigma \in K$ and $u \in \Lambda$. 
Periodicity means that information is repeated infinitely often. 
To avoid infinitely large merge trees, it can therefore be useful to reduce redundancy by building quotients.
The \emph{quotient}, $\Rspace^d / \Lambda$, is obtained by identifying points $x, y \in \Rspace^d$ whenever $y-x \in \Lambda$.
This quotient with topology inherited from the $d$-dimensional Euclidean space is usually referred to as the \emph{$d$-dimensional torus}.
Similarly, we can identify cells of $K$ if they are translates of each other by a vector of $\Lambda$. The resulting \emph{quotient complex}, $K/\Lambda$, is a complex on the $d$-torus consisting of finitely many cells because $K$ is locally finite and the $d$-torus is compact.\footnote{Note, however, that even a periodic complex that is simplicial can have a non-simplicial quotient, which includes the possibilities of more than two edges connecting the same two vertices and an edge connecting a single vertex back to itself; see Figure~\ref{fig:TwoTreesOne} for an example.}
We write $\filter/\Lambda \colon K/\Lambda \to \Rspace$ for the \emph{quotient filter}, which is a convenient representaton of $\filter \colon K \to \Rspace$.
Unfortunately, this representation is not unique since, for example, $K$ and $\filter$ are also periodic with respect to $2 \Lambda$, or really any sublattice of $\Lambda$; compare the two panels in Figure~\ref{fig:TwoTreesOne} in which the respective unit cells (a square on the left and a rectangle of twice the area on the right) define the same periodic complex
but yield different merge trees.
This motivates the introduction of a quantified version of the merge tree; see Figure~\ref{fig:TwoTreesTwo}.
We first explain how we quantify and then how we apply this idea to modify the merge tree.

\subsection{Shadow Monomial}
\label{sec:2.3}

Let $\Lambda \subseteq \Rspace^d$ be a $d$-dimensional lattice, and $\filter \colon K \to \Rspace$ a $\Lambda$-periodic filter on a $\Lambda$-periodic complex.
Write $\phi \colon \Rspace^d \to \Rspace^d / \Lambda$ for the projection that maps every $x$ to $x + \Lambda$.
Taking the inverse, a sublevel set of the quotient filter lifts to the corresponding sublevel set of the filter: $K_t = \phi^{-1} ((K / \Lambda)_t)$.
Given a component $\CCC \subseteq (K / \Lambda)_t$, we call a component $\ccc$ of $\phi^{-1}(\CCC)$ a \emph{shadow} of $\CCC$.
As an example, consider the sublevel set of the graph in Figure~\ref{fig:TwoTreesOne} before adding the edge with value $9.0$.
The quotient is a loop whose shadows are infinite polygonal lines that run parallel to each other in a diagonal direction, as shown in Figure~\ref{fig:shadow}.
To count the distinct shadows of $\CCC$, we introduce the \emph{periodicity lattice} of all vectors that keep the shadows invariant:
\begin{align}
  \Lambda_\CCC &= \{ u \in \Lambda \mid \ccc = \ccc + u \mbox{\rm ~for at least one and therefore every shadow~} \ccc \mbox{\rm ~of~} \CCC \}.
\end{align}
For example, translating the shadow in Figure~\ref{fig:shadow} along the vector $(1,1)$ maps the polygonal line to itself.
All other vectors for which this holds are integer multiples of this vector.
Hence, the periodicity lattice of the loop is the $1$-dimensional lattice spanned by $(1,1)$.
With this, counting the shadows of $\CCC$ reduces to counting the elements of $\Lambda / \Lambda_\CCC$.
If the cardinality of this quotient is finite, then this gives the number of shadows.
On the other hand, if the cardinality is infinite, we wish to count the elements that appear within a spherical window of radius $R$.
To this end, we introduce a monomial in the variable $R$, such that the coefficient gives the density and the exponent gives the growth-rate.
Write $\nu_q$ for the $q$-dimensional volume of the unit ball in $\Rspace^q$, so $\nu_0 = 1$, $\nu_1 = 2$, $\nu_2 = \pi$, $\nu_3 = \frac{4 \pi}{3}$, etc.
\begin{definition}[Shadow Monomial]
  \label{dfn:shadow_monomial}
  Let $\Lambda \subseteq \Rspace^d$ be a $d$-dimensional lattice, and $\filter \colon K \to \Rspace$ a $\Lambda$-periodic filter.
  Letting $\CCC \in \MTree (\filter/\Lambda)$ be a connected component of a sublevel set of the quotient filter, 
  and $p$ the dimension of the corresponding periodicity lattice, $\Lambda_\CCC$, the \emph{shadow monomial} of $\CCC$ is
  \begin{align}
    \Shadow{\CCC}{R} &= \frac{\volume{p}{\Lambda_\CCC}}{\volume{d}{\Lambda}} \cdot \nu_{d-p} R^{d-p} .
  \end{align}
\end{definition}
Up to lower-order terms, this monomial gives the number of elements of $\Lambda / \Lambda_\CCC$ that have points inside the sphere of radius $R$ centered at the origin in $\Rspace^d$.
For example, the loop whose shadows are the polygonal lines in Figure~\ref{fig:shadow}, we have $\volume{1}{\Lambda_\CCC} = \sqrt{2}$, $\volume{2}{\Lambda} = 1$, and $\nu_1 = 2$, so $\Shadow{\CCC}{R} = 2 \sqrt{2} R^1$, and up to a constant error term this is the number of polygonal lines that pass through a disk of radius $R$.
\begin{figure}[hbt]
  \centering \vspace{0.0in}
  \resizebox{!}{1.6in}{\input{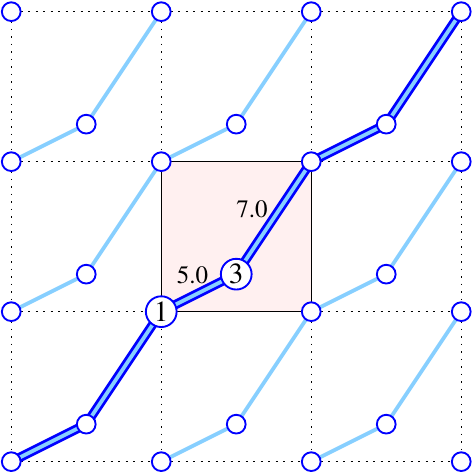_t}}
  \caption{\footnotesize Each shadow of the loop in the quotient complex is an infinite polygonal line with periodicity lattice spanned by the vector $(1,1)$.
  The length of its unit cell is $\sqrt{2}$, which implies that its shadow monomial is $2 \sqrt{2} R$.}
  \label{fig:shadow}
\end{figure}
This property is fundamental to our results, so we give a formal statement and a proof.
\begin{lemma}[Counting Inside Sphere]
  \label{lem:counting_inside_sphere}
  Let $\Lambda \subseteq \Rspace^d$ be a $d$-dimensional lattice and $\Lambda_\CCC \subseteq \Lambda$ a $p$-dimensional sublattice.
  Then the number of elements of $\Lambda / \Lambda_\CCC$ that have a non-empty intersection with the ball of radius $R$ centered at $0 \in \Rspace^d$ is $\Shadow{\CCC}{R} + \bigOh{R^{d-p-1}}$.
\end{lemma}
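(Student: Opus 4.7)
The plan is to reduce the $d$-dimensional counting problem to a $(d-p)$-dimensional lattice point count in the orthogonal complement of $\Span{\Lambda_\CCC}$, after dealing with the subtlety that $\Lambda_\CCC$ need not be saturated in $\Lambda$.

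First I would set up the projection. Let $P = \Span{\Lambda_\CCC}$, let $V = P^\perp$ be the orthogonal complement, and let $\pi_V \colon \Rspace^d \to V$ denote orthogonal projection. Write $\Lambda^{\mathrm{sat}} = \Lambda \cap P$ for the saturation of $\Lambda_\CCC$ in $\Lambda$, and set $t = [\Lambda^{\mathrm{sat}} : \Lambda_\CCC]$. The projection $\pi_V$ maps $\Lambda$ onto a $(d-p)$-dimensional lattice $\Lambda' \subseteq V$, with kernel $\Lambda^{\mathrm{sat}}$. Since each fiber of $\Lambda \to \Lambda'$ is a translate of $\Lambda^{\mathrm{sat}}$ and each such translate contains exactly $t$ cosets of $\Lambda_\CCC$, the natural map $\Lambda / \Lambda_\CCC \to \Lambda'$ is a surjection whose fibers all have size $t$.

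Next I would reduce intersection with $B(0,R)$ to a condition in $V$. For $\lambda \in \Lambda$, the affine flat $\lambda + P$ meets $B(0,R)$ iff $\norm{\pi_V(\lambda)} \leq R$, and in that case the intersection is a $p$-dimensional ball of radius $\sqrt{R^2 - \norm{\pi_V(\lambda)}^2}$. Let $D$ be the diameter of a fundamental domain of $\Lambda_\CCC$ in $P$. If $\norm{\pi_V(\lambda)} \leq \sqrt{R^2 - D^2}$, then this $p$-ball has radius $\geq D$ and therefore contains a point of the coset $\lambda + \Lambda_\CCC$, so the coset meets $B(0,R)$. If $\norm{\pi_V(\lambda)} > R$, the coset does not meet $B(0,R)$. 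The uncertain regime $\sqrt{R^2 - D^2} < \norm{\pi_V(\lambda)} \leq R$ is an annular shell in $V$ of thickness $\bigOh{1/R}\cdot R = \bigOh{1}$ at radius $R$, hence of $(d-p)$-volume $\bigOh{R^{d-p-1}}$, so it contains only $\bigOh{R^{d-p-1}}$ points of $\Lambda'$ and accounts for the same order of cosets.

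Then I would invoke the standard Gauss-type lattice point estimate on the $(d-p)$-dimensional lattice $\Lambda'$ inside $V$: the number of points of $\Lambda'$ inside a Euclidean ball of radius $R$ equals $\nu_{d-p} R^{d-p} / \volume{d-p}{\Lambda'} + \bigOh{R^{d-p-1}}$, since a ball has smooth boundary. Combined with the previous step and the fiber count of $t$, the number of cosets in $\Lambda / \Lambda_\CCC$ meeting $B(0,R)$ equals
\begin{align*}
  t \cdot \frac{\nu_{d-p} R^{d-p}}{\volume{d-p}{\Lambda'}} + \bigOh{R^{d-p-1}}.
\end{align*}

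Finally, I would verify the constants. Choosing a basis of $\Lambda$ whose first $p$ vectors span $\Lambda^{\mathrm{sat}}$, a standard determinant computation yields $\volume{d}{\Lambda} = \volume{p}{\Lambda^{\mathrm{sat}}} \cdot \volume{d-p}{\Lambda'}$; and $\volume{p}{\Lambda_\CCC} = t \cdot \volume{p}{\Lambda^{\mathrm{sat}}}$ by definition of the index $t$. Dividing gives $t / \volume{d-p}{\Lambda'} = \volume{p}{\Lambda_\CCC}/\volume{d}{\Lambda}$, which is precisely the coefficient in $\Shadow{\CCC}{R}$. I expect the main subtlety to be the non-saturated case: a reader might assume $\Lambda_\CCC$ is always $\Lambda \cap P$, but cyclic permutations of shadows by translations can produce a genuine index $t > 1$, and absorbing this $t$ into $\volume{p}{\Lambda_\CCC}$ via the identity above is exactly what makes the formula in Definition~\ref{dfn:shadow_monomial} the right one.
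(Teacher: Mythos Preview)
Your proof is correct and follows essentially the same route as the paper: saturate $\Lambda_\CCC$ to $\Lambda^{\mathrm{sat}} = \Lambda \cap P$ (the paper's $\Lambda'$), project orthogonally to the $(d-p)$-dimensional lattice (the paper's $\Lambda''$), apply the Gauss count there, multiply by the index $t = \volume{p}{\Lambda_\CCC}/\volume{p}{\Lambda^{\mathrm{sat}}}$, and verify the coefficient via $\volume{d}{\Lambda} = \volume{p}{\Lambda^{\mathrm{sat}}}\cdot\volume{d-p}{\Lambda'}$. The only cosmetic differences are that the paper first isolates the cases $p=0$ and $p=d$ before combining them, and it controls the error with a cylinder-in-ball inclusion rather than your explicit annular-shell estimate; your phrase ``thickness $\bigOh{1/R}\cdot R = \bigOh{1}$'' is garbled, but the bound $R-\sqrt{R^2-D^2}\leq D = \bigOh{1}$ is all you need and the $\bigOh{R^{d-p-1}}$ conclusion stands.
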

\begin{proof}
  We begin with the two extreme cases.
  In the zero-dimensional case, when $p=0$, we have $\Lambda_\CCC = \{0\}$ and $\Lambda / \Lambda_\CCC = \Lambda$.
  We thus count the points of $\Lambda$ inside the $d$-ball of radius $R$ centered at $0$, which up to lower-order terms is the volume of the ball divided by the volume of the unit cell: $\nu_d R^d / \volume{d}{\Lambda} = \Shadow{\CCC}{R}$ because $\volume{0}{\Lambda_\CCC} = 1$.
  Observe that this estimate also holds if we count the points inside the $d$-ball with radius $R-1$.
  In the full-dimensional case, when $p=d$, we have $\dim{\Lambda_\CCC} = \dime{\Lambda}$, so $\Lambda / \Lambda_\CCC$ is finite, with cardinality $\volume{d}{\Lambda_\CCC} / \volume{d}{\Lambda}$.
  Indeed, any unit cell of $\Lambda_\CCC$ contains a point from each class in the quotient, so for a large enough constant $R$, the $d$-ball with radius $R$ contains a point from each class.
  Furthermore, $\volume{d}{\Lambda_\CCC} / \volume{d}{\Lambda} = \Shadow{\CCC}{R}$ because $\nu_0 R^0 = 1$.

  \smallskip
  In the general case, let $\Lambda'$ be the maximal sublattice of $\Lambda$ that satisfies $\Span{\Lambda'} = \Span{\Lambda_\CCC}$.
  By construction, $p = \dime{\Lambda'} = \dime{\Lambda_\CCC}$, so we can apply the result for the full-dimensional case, in which we count exactly $\volume{p}{\Lambda_\CCC} / \volume{p}{\Lambda'}$ translates of $\Lambda_\CCC$ within $\Lambda'$.
  Next project $\Lambda$ orthogonally onto the linear subspace orthogonal to $\Span{\Lambda'}$, which gives a $(d-p)$-dimensional lattice, $\Lambda''$.
  The unit cells satisfy $\volume{p}{\Lambda'} \cdot \volume{d-p}{\Lambda''} = \volume{d}{\Lambda}$.
  Using the result for the zero-dimensional case, we count $\nu_{d-p} R^{d-p} / \volume{d-p}{\Lambda''} + \bigOh{R^{d-p-1}}$ points of $\Lambda''$ within the ball of radius $R$, and multiplying the two counts while ignoring the lower order term, we get 
  \begin{align}
    \frac{\volume{p}{\Lambda_\CCC}}{\volume{p}{\Lambda'}} \cdot \nu_{d-p} R^{d-p} \frac{1}{\volume{d-p}{\Lambda''}}
    &= \frac{\volume{p}{\Lambda_\CCC}}{\volume{p}{\Lambda'}} \frac{\volume{p}{\Lambda'}}{\volume{d}{\Lambda}} \cdot \nu_{d-p} R^{d-p}
     = \Shadow{\CCC}{R} .
  \end{align}
  This expression measures the product of the $(d-p)$-ball with radius $R$ and the $p$-ball with constant radius.
  This product is not contained in the $d$-ball of radius $R$, but if we shrink the $(d-p)$-ball to radius $R-1$, then it is contained in the $d$-ball, assuming $R$ is sufficiently large.
  This changes the count only by a lower-order term, which implies the claim in the general case.
\end{proof}

\subsection{Periodic Merge Tree}
\label{sec:2.4}

To cope with the potentially infinitely many connected components of a periodic complex, we construct the merge tree for a finite representation in the torus and decorate this tree with a shadow monomial at every point.
Each shadow monomial is specified by a real coefficient and an integer exponent, and we write $\Monomial [R]$ for the set of such monomials.
\begin{definition}[Periodic Merge Tree]
  \label{dfn:periodic_merge_tree}
  Let $\Lambda \subseteq \Rspace^d$ be a $d$-dimensional lattice, and $\filter \colon K \to \Rspace$ a $\Lambda$-periodic filter.
  The \emph{periodic merge tree} of $\filter$ with respect to $\Lambda$, denoted $\perMTree{\filter}{\Lambda}$, is the merge tree of the quotient filter, $\MTree{} = \MTree{(\filter/\Lambda)} = \slSpace{K}{\filter/\Lambda}_\sim$, together with its height function, $\Height \colon \MTree{} \to \Rspace$, and the \emph{frequency function}, $\Frequency \colon \MTree{} \to \Monomial[R]$, which maps each point $\CCC \in \MTree$ to its shadow monomial.
\end{definition}
Whenever we talk about a periodic merge tree, we will tacitly assume that it comes with its height and frequency functions.
The notational ambiguity between merge trees and periodic merge trees is deliberate and emphasizes that every periodic merge tree is also a merge tree.
See Figure~\ref{fig:TwoTreesTwo} for two examples, in which we write the shadow monomials above the beams of the two periodic merge trees.
For notational reasons, we write $\Shadow{\CCC}{R} = \Frequency (\CCC) (R)$ when a value of the frequency function is applied to a radius, $R$.
Since connected components tend to grow
with increasing threshold, it is plausible that the shadow monomial can only get smaller when it changes as we move from left to right in the merge tree.
To make this intuition concrete, we write $t R^{d-q} < s R^{d-p}$ 
if either $d-q < d-p$ or $d-q = d-p$ and $t < s$, and we write $t R^{d-q} \leq s R^{d-p}$ if equality is allowed.
\begin{figure}[hbt]
  \centering \vspace{0.0in}
  \resizebox{!}{1.75in}{\input{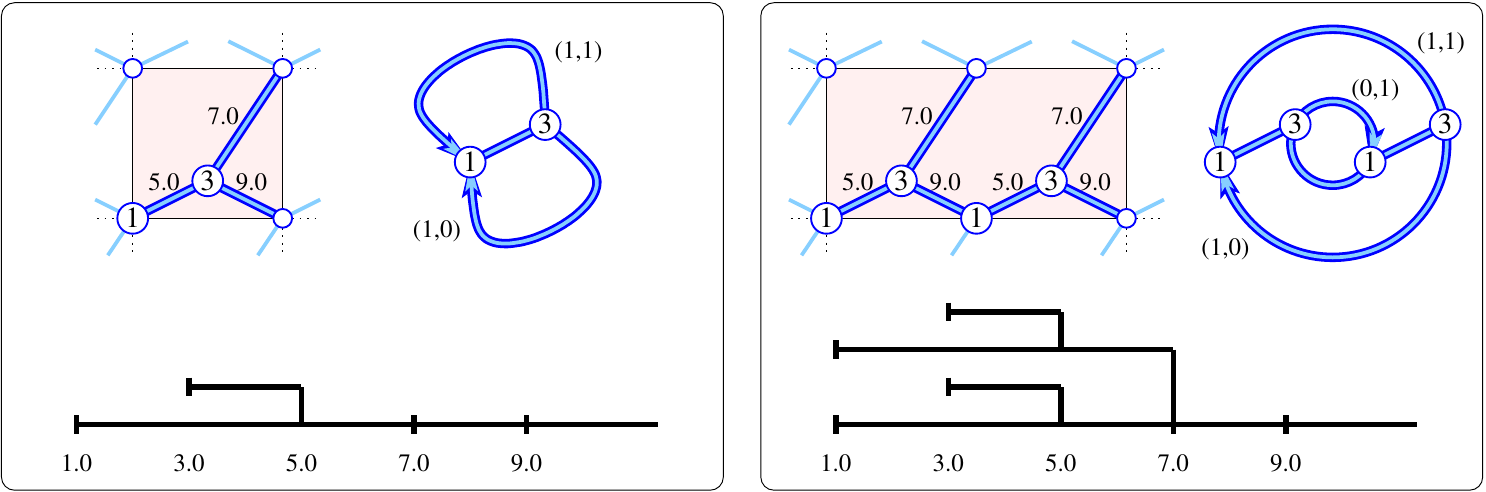_t}}
  \caption{\footnotesize The same graphs and periodic merge trees as in Figure~\ref{fig:TwoTreesOne} but with additional information.
  Edges with non-zero shift vectors (to be defined in Section~\ref{sec:3.2})
  are drawn as (directed) arcs and labeled with their shift vectors, while edges with zero shift vectors remain undirected and without vector.
  In addition to the appearances at $t = 1.0, 3.0$ and the merger at $t = 5.0$, there are two catenations at $t = 7.0, 9.0$ that define the shadow monomials decorating the beams of the periodic merge trees in the \emph{left panel}.
  Note that the tree in the \emph{right panel} has twice as many subtrees rooted at the point labeled $7.0$, and that the shadow monomials compensate for the increased number of beams.
  Indeed, we have two events at each of the first four values defining the periodic merge tree, with a merger followed by a catenation at $t = 7.0$.}
  \label{fig:TwoTreesTwo}
\end{figure}
\begin{lemma}[Monotonicity]
  \label{lem:monotonicity}
  Let $\Lambda \subseteq \Rspace^d$ be a $d$-dimensional lattice, $\filter \colon K \to \Rspace$ a $\Lambda$-periodic filter, and 
  $\MTree = \perMTree{\filter}{\Lambda}$
  the periodic merge tree of $\filter$ with frequency function $\Frequency \colon \MTree{} \to \Monomial [R]$.
  If a point $\BBB \in \MTree{}$ covers another point $\AAA \in \MTree{}$, then $\Frequency(\BBB) \leq \Frequency(\AAA)$.
\end{lemma}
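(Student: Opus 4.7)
\smallskip\noindent\textbf{Proof plan.}
The plan is to reduce the claim to a comparison of the periodicity lattices $\Lambda_\AAA$ and $\Lambda_\BBB$, and then to compare dimensions first and unit-cell volumes second, in line with how the ordering on $\Monomial[R]$ is defined.

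\smallskip
First I would establish the key containment: if $\BBB$ covers $\AAA$, then $\Lambda_\AAA \subseteq \Lambda_\BBB$. Pick a shadow $\aaa$ of $\AAA$. Since $\AAA \subseteq \BBB$, we have $\aaa \subseteq \phi^{-1}(\BBB)$, and because $\aaa$ is connected it lies in a unique connected component of $\phi^{-1}(\BBB)$, namely in some shadow $\bbb$ of $\BBB$. For any $u \in \Lambda_\AAA$, the set $\bbb + u$ is again a shadow of $\BBB$ (translates of shadows by $\Lambda$ are shadows), and $\aaa = \aaa + u \subseteq \bbb + u$. Together with $\aaa \subseteq \bbb$, this forces $\bbb \cap (\bbb + u) \neq \emptyset$, and since distinct shadows are disjoint, $\bbb = \bbb + u$. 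By the ``one and therefore every shadow'' clause in the definition of $\Lambda_\BBB$, this gives $u \in \Lambda_\BBB$. Hence $\Lambda_\AAA \subseteq \Lambda_\BBB$.

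\smallskip
Next I would translate this containment into the desired monomial inequality. Let $p = \dime{\Lambda_\AAA}$ and $q = \dime{\Lambda_\BBB}$, so $p \leq q$. If $p < q$, then the exponent of $\Frequency(\AAA)$ is $d-p$, strictly larger than the exponent $d-q$ of $\Frequency(\BBB)$, and the ordering defined just before the lemma immediately yields $\Frequency(\BBB) < \Frequency(\AAA)$. If instead $p = q$, then $\Lambda_\AAA$ is a full-rank sublattice of $\Lambda_\BBB$ inside the common linear subspace $\Span{\Lambda_\AAA} = \Span{\Lambda_\BBB}$, so
\[
   \volume{p}{\Lambda_\AAA} = [\Lambda_\BBB : \Lambda_\AAA] \cdot \volume{p}{\Lambda_\BBB} \geq \volume{p}{\Lambda_\BBB} .
\]
Both shadow monomials share the factor $\nu_{d-p} R^{d-p} / \volume{d}{\Lambda}$, so this volume inequality gives $\Frequency(\BBB) \leq \Frequency(\AAA)$.

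\smallskip
The routine parts are the ordering bookkeeping in the two cases and the standard index formula for nested lattices of equal rank. The main obstacle is the containment $\Lambda_\AAA \subseteq \Lambda_\BBB$: it is tempting but wrong to argue purely ``componentwise'' about $\AAA$ and $\BBB$ in the torus, and one really has to pass to shadows in $\Rspace^d$ and use that they are mutually disjoint connected components of $\phi^{-1}(\BBB)$ to conclude that a translation fixing $\aaa$ setwise must fix the unique shadow of $\BBB$ containing it.
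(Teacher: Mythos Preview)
Your proposal is correct and follows essentially the same route as the paper: establish $\Lambda_\AAA \subseteq \Lambda_\BBB$ from the covering relation, then compare exponents (dimensions) and, in the equal-dimension case, coefficients (unit-cell volumes). Your justification of the lattice containment via shadows is in fact more carefully spelled out than the paper's one-line assertion, but the argument is the same.
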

\begin{proof}
  Since $\BBB$ covers $\AAA$, the periodicity lattice of $\AAA$ is a sublattice of that of $\BBB$: $\Lambda_\AAA \subseteq \Lambda_\BBB$.
  Indeed, if $\aaa = \aaa+u$ for every shadow $\aaa$ of $\AAA$, then $\bbb = \bbb +u$ for every shadow $\bbb$ of $\BBB$, but not necessarily the other way round.
  The exponents of $\Frequency(\AAA)$ and $\Frequency(\BBB)$ are $d - \dime{\Lambda_\AAA}$ and $d - \dime{\Lambda_\BBB}$.
  If $\dime{\Lambda_\AAA} < \dime{\Lambda_\BBB}$, then $\Frequency(\BBB) < \Frequency(\AAA)$, and if $\dime{\Lambda_\AAA} = \dime{\Lambda_\BBB}$, then the coefficients determine the order, and the only difference between them is the volume of the periodicity lattice, which is at least as large for $\AAA$ as it is for $\BBB$.
  We thus get $\Frequency(\BBB) \leq \Frequency(\AAA)$ in general.
\end{proof}

Since the shadow monomial can only decrease along a left-to-right trajectory in the merge tree, its exponent progresses from $d$ down to $0$, possibly skipping some integers.
We therefore define the \emph{$(d-p)$-ary era} of a beam as the interval of points whose shadow monomials have exponent $d-p$.
For example, the longest beams of the periodic merge trees in Figure~\ref{fig:TwoTreesTwo} have three eras each, characterized by the exponent of $R$, which shrinks from left to right.
We further subdivide each era into \emph{epochs}, which are maximal intervals in which the coefficient of the shadow monomial stays constant.
From one epoch to the next on the same beam, the periodicity lattice grows to a superlattice, which implies a strengthening of Lemma~\ref{lem:monotonicity}, namely that the coefficient drops by division with an integer larger than $1$.
The graph in Figure~\ref{fig:TwoTreesTwo} is unfortunately too small to have eras consisting of more than one epoch, but see Section~\ref{sec:6} for a more elaborate example in $\Rspace^3$.

\section{The Algorithm}
\label{sec:3}

This section presents the algorithm we use to construct the periodic merge tree of a $\Lambda$-periodic filter, $\filter \colon K \to \Rspace$.
This tree is but the merge tree of the quotient filter, $\filter/\Lambda \colon K/\Lambda \to \Rspace$, equipped with the frequency function.
The construction of the merge tree has been amply studied prior to this paper; see e.g.\ \cite{SmMo20}, so we will focus on the frequency function.

\subsection{Critical Events}
\label{sec:3.1}

Similar to Kruskal's classic algorithm for minimum spanning trees \cite{Kru56}, we process the cells of $K / \Lambda$ in the order of the filter and update the periodic merge tree at each step.
The only relevant cells are the vertices and edges, and we distinguish the edges in the quotient complex whose endpoints lie in the same and in different connected components.
In the former case, we consider a shadow of the edge and further distinguish when its endpoints belong to the same or to different shadows of the connected component.
We therefore have three types of critical events that characterize the evolution of the connected components:
\smallskip \begin{itemize}
  \item \emph{appearance:} a vertex is added, which starts a new connected component or, equivalently, a new beam of the periodic merge tree;
   \item \emph{merger:} an edge with endpoints in two different connected components is added, which merges these components into one or, equivalently, ends a beam by joining it to another in the merge tree;
  \item \emph{catenation:} an edge with both endpoints in the same connected component is added and the endpoints of its shadow belong to two different shadows of that component, which enlarges the periodicity lattice of the component and shrinks its shadow monomial;
\end{itemize} \smallskip
see Figure~\ref{fig:TwoTreesTwo}, where the tree in the left panel experiences two appearances, one merger, and two catenations, while the tree in the right panel experiences four appearances, three mergers, and two catenations.
To add a new vertex in a appearance, we set its periodicity lattice to $\{0\}$.
To add a new edge, $e$, let $\AAA, \BBB$ be the connected components that contain its endpoints.
If $\AAA = \BBB$, then $e$ together with a path connecting its endpoints form a loop in the quotient complex, and the shadow of this loop can either be a loop in the periodic complex, or a path that connects different shadows of the same vertex.
Hence, the addition of the edge has either no effect or it enlarges the periodicity lattice of the component and thus decreases the shadow monomial. 
Writing $\Lambda_e$ for the periodicity lattice of the loop, and writing $\CCC$ for the component $\AAA$ after adding $e$,
the periodicity lattice of $\CCC$ is the smallest common superlattice: $\Lambda_\CCC = \Lambda_\AAA + \Lambda_e$.
On the other hand, if $\AAA \neq \BBB$, the addition of $e$ merges the two components, and we write $\CCC$ for the new component, which is the union of $\AAA$, $\BBB$, and $e$.
The periodicity lattice of $\CCC$ is again the smallest common superlattice:
$\Lambda_\CCC = \Lambda_\AAA + \Lambda_\BBB$.

\subsection{Periodicity Lattice and Loops}
\label{sec:3.2}

There is a direct relation between the periodicity lattice of a connected component and its loops.
Since our primary concern are components (and not cycles of dimension $1$ or higher), we may assume that a component, $\CCC$, is an undirected graph.
Nonetheless, we need to reason about loops in this graph, and this is done more conveniently in the representation of $\CCC$ as a directed graph, in which the (undirected) \emph{edge} connecting $x$ to $y$ is replaced by two (directed) \emph{arcs}, one from $x$ to $y$ and the other from $y$ to $x$.
\begin{definition}[Paths and Loops]
  \label{dfn:paths_and_loops}
  In a directed graph, a \emph{path} from $x$ to $y$ is a sequence of arcs, $a_i$ from $x_i$ to $x_{i+1}$, for $1 \leq i \leq k$, with $x_1=x$ and $x_{k+1}=y$.
  This path is a \emph{loop} if, in addition, $x = y$.
  A path or loop is \emph{simple} if no arc is repeated.
\end{definition}
In our context, each arc is labeled with an integer vector that records the relative position of the unit cells where a trajectory that is a shadow of the arc starts and ends.
Specifically, let $a$ be an arc from $x$ to $y$, write $x_0, y_0$ for the shadows of the endpoints inside a common unit cell, and let $x_0+u, y_0+w$ be the endpoints of a shadow of the arc.
Then the \emph{shift vector} of the arc is $\Shift{a} = U^{-1} (w-u)$.
Correspondingly, the shift vector of the arc from $y$ to $x$ is $\Shift{-a} = - \Shift{a}$.
The \emph{drift vector} of a path is the sum of shift vectors of its arcs, and similarly for a loop.
Assuming the directed graph represents a connected undirected graph, $\CCC$, we can designate one of its vertices as the \emph{root} and extend every loop so it starts and ends at the root without changing its drift vector.
Any two loops can therefore be concatenated while adding their drift vectors.
The collection of drift vectors thus satisfies the properties of a lattice: for every vector we also have its integer multiples, and for every two vectors we also have their sum.
Hence, this collection is isomorphic to a sublattice of $\Zspace^d$, and we write $V \subseteq \Zspace^d$ for its basis.
The periodicity lattice of the undirected graph, $\Lambda_\CCC$, has basis $U V$.
  
\smallskip \noindent \textsc{Example.}
Consider the right panel of Figure~\ref{fig:TwoTreesTwo}.
The edges of the quotient graph with non-zero shift vectors are drawn directed to avoid possible ambiguities.
The counterclockwise version of the \emph{upper loop} consists of the edges with filter values $5.0, 7.0, 5.0, 7.0$ 
and drift vector $v_1 = (0,0) + (0,1) + (0,0) + (1,1) = (1,2)$, which means this loop moves one unit cells to the right and two unit cells up.
The basis vectors that span the rectangular unit cell are $u_1 = (2.0, 0.0)$ and $u_2 = (0.0, 1.0)$, so this drift vector corresponds to
\begin{align}
  U \cdot v_1 &= \begin{bmatrix}
             2.0  &  0.0  \\  0.0  &  1.0
           \end{bmatrix} \cdot
           \begin{bmatrix} 1  \\  2 \end{bmatrix}
   =  \begin{bmatrix} 2.0 \\ 2.0 \end{bmatrix} ,
\end{align}
which is a vector in $\Lambda$.
Its length is $2 \sqrt{2}$, which explains the shadow monomial decorating the epoch that starts at $t = 7.0$ and ends at $t = 9.0$.
After adding the two edges with filter value $9.0$, we get additional loops:  the \emph{inner loop} with drift vector $v_2 = (0,0)-(0,1) = (0,-1)$, and the \emph{lower loop} with drift vector $v_3 = (0,0,)+(0,0)+(0,0)-(1,0) = (-1,0)$. 
All other loops are concatenations of these three loops, and thus their drift vectors are sums of these three drift vectors.
Writing $V$ for the matrix whose columns are $v_1, v_2, v_3$, the corresponding periodicity lattice is thus spanned by
\begin{align}
  U \cdot V &= \begin{bmatrix}
                 2.0  &  0.0  \\  0.0  &  1.0
               \end{bmatrix} \cdot
               \begin{bmatrix} 
                 1 & 0 & -1 \\ 2 & -1 & 0
               \end{bmatrix}
   =  \begin{bmatrix} 2.0 & 0.0 & -2.0 \\ 
                      2.0 &-1.0 & 0.0  
      \end{bmatrix} .
\end{align}
The last vector is redundant, so we may choose the first two as a basis.
They span a unit cell of area $2.0$, which is the same area as the unit cell of $\Lambda$ (the shaded rectangle).
This explains the shadow monomial decorating the last epoch in the periodic merge tree.

\subsection{Implementing with Union-Find}
\label{sec:3.3}

For the computation of the periodicity lattice, it suffices to look at the drift vectors of a basis of the loop space.
We work with a basis defined by a spanning tree of $\CCC$: each edge not in this tree belongs to a unique simple cycle in which all other edges are taken from the spanning tree.
Choosing a direction for this edge, we get a corresponding simple loop anchored at the root of $\CCC$, and the basis consists of one such loop for each edge not in the spanning tree.
It is convenient to use the spanning tree that consists of the edges corresponding to past merge events for this purpose.
The construction of this tree can be reduced to a sequence of \emph{find} and \emph{union} operations---the former are used to decide whether the two endpoints of an edge belong to the same or to different components, and the latter merge the two components that contain the endpoints, assuming they are different.

\smallskip
To put the theory into practice, we use a data structure that supports constant time find and amortized logarithmic time union operations.
Faster implementations of the \emph{union-find} data type exist; see e.g.\ the survey in \cite{GaIt91}, but they are not required for constructing the periodic merge tree within the desired time bound.
The data structure stores the vertices of $\CCC$ in a linked list whose first vertex is the root $r$ of $\CCC$.
We write $\Next{x}$ for the successor of $x$, which is \texttt{null} if $x$ is last.
In addition, $x$ stores a link to the root, $\Root{x} = r$, and the drift vector of the unique simple path from $r$ to $x$ inside the spanning tree, denoted $\Drift{x}$.
Finally, the root stores the number of vertices in the component, denoted $\Size{r}$, the vertex with smallest filter value in $\CCC$, denoted $\Oldest{r}$, as well as an integer matrix, $\Basis{r} = V$, such that $U V$ is a basis of the periodicity lattice of $\CCC$.

\smallskip
Given an edge with endpoints $x,y$, the find operation returns their respective roots: $r = \Root{x}$ and $s = \Root{y}$.
The vertices belong to the same component iff $r = s$, so we can distinguish between the cases in which the edge forms a loop or connects two components in constant time.
In the former case, we get a new path from $r$ to $y$, possibly with a different drift vector,
which requires updating the periodicity lattice.
Let $a$ be the directed version of the edge, leading from $x$ to $y$:
\begin{tabbing}
  mmm\=mm\=mm\=mm\=mm\=mm\=mm\=mm\=mm\=mm\=mm\=\kill
  {\footnotesize 01} \> \texttt{if} $r = s$ \texttt{then} \=
    $v = \Drift{x} + \Shift{a} - \Drift{y}$; \\*
  {\footnotesize 02} \> \> $\Basis{r} = \textsc{Reduce} (\Basis{r}, v)$ \\*
  {\footnotesize 03} \> \texttt{endif}.
\end{tabbing}
In the latter case, we form the union of the two components.
To do this efficiently, we pick the smaller component, update the information at all its vertices, and insert the updated list right after the root of the larger component.
In addition, we compute the periodicity lattice of the union:
\begin{tabbing}
  mmm\=mm\=mm\=mm\=mm\=mm\=mm\=mm\=mm\=mm\=mm\=\kill
  {\footnotesize 01} \> \texttt{if} $r \neq s$ \texttt{then} assume $\Size{s} \leq \Size{r}$; \\*
  {\footnotesize 02} \> \> $\Oldest{r} = \min \{ \Oldest{r}, \Oldest{s} \}$;
  $\Basis{r} = \textsc{Reduce} (\Basis{r}, \Basis{s})$; \\
  {\footnotesize 03} \> \> $v = \Drift{x} + \Shift{a} - \Drift{y}$; $z = s$; \\
  {\footnotesize 04} \> \> \texttt{while} $z \neq \texttt{null}$ \texttt{do} \\*
  {\footnotesize 05} \> \> \> $\Root{z} = r$; $\Drift{z} = v + \Drift{z}$;
    $last = z$; $z = \Next{z}$ \\*
  {\footnotesize 06} \> \> \texttt{endwhile}; \\
  {\footnotesize 07} \> \> $\Size{r} = \Size{r} + \Size{s}$;
    $\Next{last} = \Next{r}$; $\Next{r} = s$ \\*
  {\footnotesize 08} \> \texttt{endif.}
\end{tabbing}
The algorithm for updating the periodicity lattice, including its running time, will be discussed in the next subsection.
The running time of all other operations is easily analyzed.
The periodic merge tree of a $\Lambda$-periodic filter, $\filter \colon K \to \Rspace$, is constructed incrementally, by adding one vertex or edge at a time, and each addition gives rise to at most two find operations and at most one union operation.
Letting $n$ be the number of vertices of $K / \Lambda$, there are $n-1$ union operations in total.
Each vertex that receives a new drift vector ends up in a component at least twice the size of its old component.
Hence, every vertex changes its root and drift vector at most $\log_2 n$ times.
Letting $m$ be the number of edges of $K / \Lambda$, we observe that each find operation takes constant time, which implies that $\bigOh{n \log n + m}$ time suffices for this part of the algorithm.

\subsection{Euclid's Algorithm for Lattices}
\label{sec:3.4}

Next, we explain how the bases of two periodicity lattices can be processed to give the basis of their sum.
Given sublattices, $\Lambda', \Lambda'' \subseteq \Lambda$, the vectors in their bases span $\Lambda' + \Lambda''$, which is the smallest sublattice of $\Lambda$ that contains $\Lambda'$ and $\Lambda''$.
Since there are possibly more vectors than necessary, we reduce the matrix of basis vectors to its \emph{Hermite normal form}; see Schrijver~\cite{Sch98} for general background.
Working with the corresponding integer matrices, this form can be computed in time polynomial in the size of the matrix and its entries; see \cite{KaBa79} for the historically first such algorithm. 
To be self-contained, we give a simple algorithm that reduces the matrix by repeated application of Euclid's algorithm for the greatest common divisor (gcd) of two integers; see also \cite{HMM98}.
We focus on constant size matrices and sacrifice the polynomial running time for the simplicity of the algorithm.

\smallskip
Let $M$ be a $d \times c$ integer matrix, and interpret its columns as vectors that span a sublattice of $\Zspace^d$.
The algorithm reduces $M$ using only three types of column operations: multiply a column with $-1$, exchange two columns, and subtract one column from another.
An easy but important observation is that these operations preserve the lattice spanned by the columns.
We reduce $M$ from left to right such that the non-zero columns of the resulting lower triangular matrix define a basis of the lattice:
\begin{tabbing}
  mmm\=mm\=mm\=mm\=mm\=mm\=mm\=mm\=mm\=mm\=mm\=\kill
  {\footnotesize 01} \> $i = j = 1$; \\*
  {\footnotesize 02} \> \texttt{while} $i \leq d$ \texttt{do} \\*
  {\footnotesize 03} \> \> \texttt{if} $\exists \ell$ with $j \leq \ell \leq c$ \texttt{and} $M[i,\ell] \neq 0$ \texttt{then} assume $M[i,j] > 0$; \\*
  {\footnotesize 04} \> \> \> \texttt{for} $k=j+1$ \texttt{to} $c$ \texttt{do} assume $M[i,k] \geq 0$; \\
  {\footnotesize 05} \> \> \> \> \texttt{while} $M[i,j] > 0$ \texttt{and} $M[i,k] > 0$ \texttt{do} \\*
  {\footnotesize 06} \> \> \> \> \> subtract $\floor{{M[i,j]} / {M[i,k]}}$ times column $k$ from column $j$; \\*
  {\footnotesize 07} \> \> \> \> \> exchange columns $j$ and $k$ \\*
  {\footnotesize 08} \> \> \> \> \texttt{endwhile} \\* 
  {\footnotesize 09} \> \> \> \texttt{endfor}; \\
  {\footnotesize 10} \> \> \> \texttt{for} $k = 1$ \texttt{to} $j-1$ \texttt{do} \\*
  {\footnotesize 11} \> \> \> \> subtract $\floor{{M[i,k]} / {M[i,j]}}$ times column $j$ from column $k$ \\*
  {\footnotesize 12} \> \> \> \texttt{endfor}; $j = j+1$ \\*
  {\footnotesize 13} \> \> \texttt{endif}; $i = i+1$ \\*
  {\footnotesize 14} \> \texttt{endwhile.}
\end{tabbing}
The \texttt{while}-loop in lines~{\footnotesize 05} to {\footnotesize 08} effectively computes the gcd of the $i$-th entries of columns $j$ and $k$; that is: $M[i,j] = \GCD{M[i,j]}{M[i,k]}$ and $M[i,k] = 0$.
This is prepared by ascertaining $M[i,j] > 0$ and $M[i,k] \geq 0$ in lines~{\footnotesize 03} and {\footnotesize 04} by the possible exchange of columns $j$ and $\ell$ and by possible multiplication with $-1$.
After finishing the \texttt{for}-loop in lines~{\footnotesize 04} to {\footnotesize 09}, all entries that succeed $M[i,j]$ in its row are zero, and after finishing the \texttt{for}-loop in lines~{\footnotesize 10} to {\footnotesize 12}, all entries that precede $M[i,j]$ in its row are non-negative and smaller than $M[i,j]$.
After running the algorithm, each column has strictly more leading zeros than the preceding column.
Hence, there are at most $d$ non-zero columns left, and they form a basis of the lattice defined by $M$.
Indeed, the matrix is in Hermite normal form, as defined in \cite{Mad00} for the not necessarily full-rank case.

\smallskip
The running time depends, among other things, on the entries in $M$.
To study this dependence, we introduce the \emph{magnitude} of $M$ as the maximum absolute entry in the matrix, denoted $\norm{M}_\infty$.
Similarly, we talk about the magnitude of an integer vector and the magnitude of a basis of integer vectors.
Setting $N = \norm{M}_\infty$, we show that the algorithm reduces $M$ in time at most logarithmic in $N$.
Note however that there are faster variants of this algorithm; see e.g.\ \cite{HMM98,Sto00}.
 
\smallskip
A key ingredient of the proof is B\'{e}zout's identity, which is a statement about Euclid's algorithm for computing the gcd of positive integers $s \leq t$.
Specifically, it asserts that there are integers $x$ and $y$ such that $xs+yt = \GCD{s}{t}$ and $|x|, |y| \leq t$.
These coefficients are computed by the \emph{extended Euclidean algorithm} from the quotients that appear during the execution of Euclid's algorithm.
The latter substitutes $0$ and $\GCD{s}{t}$ for $s$ and $t$, so it is not surprising that there are also integers $x_0$ and $y_0$ such that $x_0 s + y_0 t = 0$ and $|x_0|, |y_0| \leq t$, and they can similarly be computed from the quotients in Euclid's algorithm.
In summary, the actions of Euclid's algorithm are equivalent to substituting $xs+yx$ and $x_0s+y_0t$ for $s$ and $t$, and the magnitude of the four coefficients does not exceed $t$.
\begin{lemma}[Time for Reduction]
  \label{lem:time_for_reduction}
  Given a lattice spanned by the columns of a $d \times c$ matrix of integer entries with absolute values at most $N$, the reduction algorithm takes time at most $\bigOh{d^2 c^d \log (2N)}$ to construct a basis of the lattice.
\end{lemma}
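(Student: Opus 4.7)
The plan is to analyze the algorithm loop by loop, charging cost to (i) elementary column operations and (ii) the number of Euclid iterations required per pair of columns, and to control both via a bound on how the magnitude of the matrix can grow during the reduction.

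First I would count the structural cost. The outer \texttt{while}-loop in lines~01--14 is executed at most $d$ times (once per row). For each active row $i$, the \texttt{for}-loop of lines~04--09 processes at most $c-1$ right-hand columns against the pivot, and the \texttt{for}-loop of lines~10--12 processes at most $j-1 < d$ left-hand columns. Each single column operation (a subtraction of an integer multiple of one column from another, or a swap, or a sign flip) touches all $d$ entries and hence takes $\bigOh{d}$ time. The inner \texttt{while}-loop of lines~05--08 is one execution of Euclid's algorithm on the two scalars $M[i,j]$ and $M[i,k]$, so it terminates in $\bigOh{\log(2\max(|M[i,j]|,|M[i,k]|))}$ rounds. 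This already gives a per-pair cost of $\bigOh{d \log(2M_i)}$ where $M_i$ denotes the current magnitude of the working matrix when row $i$ is being reduced.

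Next I would prove the growth bound, which is the technical heart of the argument and the step I expect to be the main obstacle. Using B\'{e}zout's identity exactly as described in the text preceding the lemma, every column update performed while reducing a pair $(j,k)$ rewrites columns $j$ and $k$ as integer linear combinations of the previous columns $j$ and $k$ with coefficients bounded in magnitude by $\max(|M[i,j]|,|M[i,k]|)$, hence by the current row-$i$ magnitude, which itself is at most $M_{i-1}$ (row-$i$ entries are only produced by gcds of previous row-$i$ entries, and gcds do not increase). Thus processing one pair can at worst double and then multiply column magnitudes by $M_{i-1}$; iterating over the up to $c-1$ right-hand columns, and then absorbing the cleanup step of lines~10--12 (which contributes a further factor bounded by $M_{i-1}$ per column because $\lfloor M[i,k]/M[i,j]\rfloor \leq M_{i-1}$), I would obtain the recursion
\begin{align}
  M_i \;\leq\; C \cdot M_{i-1}^{\,c},
\end{align}
for an absolute constant $C$. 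Unrolling this recursion from $M_0 = N$ gives $\log(2M_i) = \bigOh{c^{\,i}\log(2N)}$.

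Finally I would sum the costs. The work charged to row $i$ is at most
\begin{align}
  \bigOh{(c + d)\cdot d \cdot \log(2M_i)} \;=\; \bigOh{d\,c\cdot c^{\,i}\log(2N)},
\end{align}
and summing $i$ from $1$ to $d$ the geometric series is dominated by its last term, yielding $\bigOh{d\,c \cdot c^{\,d}\log(2N)}$. Absorbing the remaining factor of $d$ that arises from the cleanup loop (which contributes $d$ additional column operations per row, each of cost $\bigOh{d\log(2M_i)}$) into the bound produces the claimed $\bigOh{d^{2}c^{\,d}\log(2N)}$ total running time. The correctness half of the lemma, namely that the non-zero columns of the final matrix form a basis of the original lattice, follows immediately from the observation (already noted in the text preceding the algorithm) that every one of the three permitted column operations preserves the lattice spanned by the columns, together with the termination condition that each successive non-zero column has strictly more leading zeros than its predecessor.
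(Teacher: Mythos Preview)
Your approach is essentially the same as the paper's: count the loop structure, use B\'{e}zout's identity to bound how each row's gcd computations inflate the magnitudes of the lower rows, derive the recursion $M_i \leq C\,M_{i-1}^{\,c}$ (the paper writes $\mu_i \leq \tfrac{1}{2}(2\mu_{i-1})^c$), and bound the inner \texttt{while}-loop by $\log$ of the current magnitude. The only structural difference is that the paper uses the single worst-case bound $\log\mu_d \leq c^{d-1}\log(2N)$ for every row and then multiplies by $d(c-1)$, whereas you sum the per-row costs as a geometric series; your route is in fact slightly sharper.

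The execution, however, has a bookkeeping slip. You define $M_i$ as the magnitude \emph{when row $i$ is being reduced}, so $M_1 = N$; but you then unroll the recursion from $M_0 = N$, which shifts every exponent up by one and gives $\log(2M_i)=\bigOh{c^{\,i}\log(2N)}$ instead of the correct $\bigOh{c^{\,i-1}\log(2N)}$. With your exponent, the sum yields $\bigOh{d\,c^{\,d+1}\log(2N)}$, which is \emph{not} bounded by $\bigOh{d^2 c^d\log(2N)}$ when $c>d$, and your final paragraph about ``absorbing the remaining factor of $d$'' does not repair this (you cannot trade a factor of $c$ for a factor of $d$). With the index fixed, your geometric sum gives $\bigOh{(c+d)\,d\,c^{\,d-1}\log(2N)} \leq \bigOh{d^2 c^d\log(2N)}$ directly, and no extra absorption step is needed.
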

\begin{proof}
  The algorithm consists of four loops, of which the outer \texttt{while}-loop iterates at most $d$ times, and the two \texttt{for}-loops together iterate $c-1$ times.
  To bound the number of iterations of the inner \texttt{while}-loop, let $\multiplicity_i$ be the maximum absolute entry that appears in row $i$ at any time during the algorithm.
  We have $\multiplicity_1 \leq N$, because the reduction of the first row does not increase its entries.
  However, it may increase entries in the other rows.
  To understand by how much, we consider the side effect of Euclid's algorithm computing the gcd of $s = M[1,j]$ and $t = M[1,k]$, with $s \leq t$ or $t \leq s$.
  It substitutes $\GCD{s}{t} = xs+yt$ for $s$ in $M[1,j]$ and $0 = x_0s+y_0t$ for $t$ in $M[1,k]$.
  Since the reduction works on columns, it also substitutes $x M[2,j] + y M[2,k]$ for $M[2,j]$ and $x_0 M[2,j] + y_0 M[2,k]$ for $M[2,k]$.
  We have $|x|, |y|, |x_0|, |y_0| \leq N$, so the first gcd computation in row $1$ increases the magnitude of row $2$ from at most $N$ to at most $2N^2$.
  Similarly, every additional gcd computation in row $1$, increases the magnitude of row $2$ by at most a factor $2N$, so after $c-1$ such computations, we have $\multiplicity_2 \leq N (2N)^{c-1} = \tfrac{1}{2} (2N)^c$.
  We iterate this argument and get $\multiplicity_i \leq \tfrac{1}{2} (2 \multiplicity_{i-1})^c \leq \tfrac{1}{2} (2N)^{c^{i-1}}$ for $2 \leq i \leq d$.
  Taking the binary logarithm of the final and largest upper bound, we get
  \begin{align}
    \log_2 \multiplicity_d
      &\leq \log_2 \left( \tfrac{1}{2} (2N)^{c^{d-1}} \right)
       \leq c^{d-1} \log_2 (2N) .
  \end{align}
  Since Euclid's algorithm takes at most logarithmically many steps to compute the gcd of two integers, this is an upper bound on the number of iterations of the inner \texttt{while}-loop.
  Multiplying with $d (c-1)$, we get $d c^d \log_2 (2N)$ as an upper bound on the total number of iterations.
  Each iteration takes $\bigOh{d}$ time to subtract an integer multiple of a column from another, or to exchange two columns, which implies the claimed time bound.
\end{proof}

\noindent \emph{Remark on magnitude.}
While the above bound on the size of temporary entries in the matrix is rather high, it is known that the magnitude of the matrix in Hermite normal form is bounded from above by $(\sqrt{d} N)^d$ \cite[pages 91, 92]{Sto00}.
Since the matrix in this form is unique \cite[Theorem~1.5.2]{Mad00}, it follows that this upper bound also applies to the reduced matrix computed by the above algorithm.

\subsection{Magnitude and Time for Construction}
\label{sec:3.5}

The size of the entries in the matrices plays an important role in the running time for constructing a periodic merge tree.
We thus express $N$ in terms of the size of $K / \Lambda$.
Considering a (directed) edge of the quotient complex, $a$, we recall that $\Shift{a}$ records the relative position of the endpoints of a shadow.
The magnitude of this vector is its largest absolute entry, and we write $D$ for the maximum magnitude over all shift vectors of edges in $K / \Lambda$, noting that it depends on the chosen basis of $\Lambda$.

\smallskip
It is not difficult to see that there are cases in which an unfortunate choice of basis can cause an arbitrarily large $D$, even independent of the number of vertices and edges in $K / \Lambda$.
On the other hand, if $K$ is a Delaunay triangulation and the basis vectors are pairwise orthogonal, then $D \leq 1$.
This follows from \cite[Theorem 3.1]{DoHu97} but can also be proved directly, as illustrated in Figure~\ref{fig:hyperrectangle}.
Let $x$ and $y$ be the endpoints of an edge in the Delaunay triangulation, and $u_i$ a basis vector.
The four points, $x, x+u_i, y, y - u_i$ are the vertices of a parallelogram, and the angles at $x+u_i$ and $y-u_i$ are equal and at most $90^\circ$.
Indeed, if they exceed $90^\circ$, then every $(d-1)$-sphere that passes through $x$ and $y$ encloses at least one of $x+u_i$ and $y-u_i$, which contradicts that $x$ and $y$ are the endpoints of an edge in the Delaunay triangulation.
Since this holds for all basis vectors and their negatives, $\pm u_i$ for $1 \leq i \leq d$, we conclude that $y$ is contained in the hyper-rectangle with vertices $x \pm u_1 \pm u_2 \pm \ldots \pm u_d$.
But this hyper-rectangle overlaps only the unit cell that contains $x$ and the $3^d - 1$ neighboring unit cells.
It follows that $x$ and $y$ lie in neighboring unit cells, so $D \leq 1$, as claimed.
\begin{figure}[hbt]
  \centering \vspace{0.0in}
  \resizebox{!}{1.5in}{\input{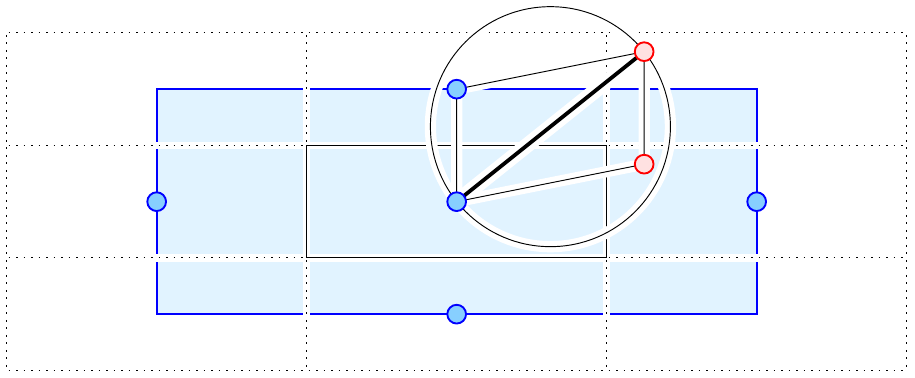_t}}
  \vspace{-0.1in}
  \caption{\footnotesize The edge connecting $x$ to $y$ cannot be in the Delaunay triangulation if $y$ lies outside the hyper-rectangle whose facets are centered at the points $x \pm u_i$.
  This hyper-rectangle has the volume of $2^d$ unit cells and overlaps $3^d$ of them, which for the displayed $2$-dimensional case are drawn with \emph{dotted lines}.}
  \label{fig:hyperrectangle}
\end{figure}

\begin{lemma}[Magnitude of Basis]
  \label{lem:magnitude_of_basis}
  Let $\Lambda \subseteq \Rspace^d$ be a $d$-dimensional lattice with given matrix of basis vectors, $U$, let $\filter \colon K \to \Rspace$ be a $\Lambda$-periodic filter, let $m$ be the number of edges in $K / \Lambda$, and let $D$ the be maximum magnitude of their shift vectors.
  For every 
  $\CCC \in \perMTree{\filter}{\Lambda}$,
  let $V$ be the Hermite normal form of the lattice $U^{-1}(\Lambda_{\CCC}) \subseteq \Zspace^d$. Then, the basis, $U V$, of the periodicity lattice of $\CCC$ satisfies $\norm{V}_\infty \leq (\sqrt{d} Dm)^d$.
\end{lemma}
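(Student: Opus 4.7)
The plan is to bound the magnitude of a generating set of $U^{-1}(\Lambda_\CCC) \subseteq \Zspace^d$ first, and then invoke the known bound on the magnitude of the Hermite normal form (cited from \cite[pages~91,~92]{Sto00} in the remark after Lemma~\ref{lem:time_for_reduction}) to convert this into a bound on $\norm{V}_\infty$.

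First I would recall from Section~\ref{sec:3.2} that, after orienting the edges of $\CCC$ and choosing a root, the columns of $V$ span the lattice of drift vectors of loops anchored at the root. Since any loop can be decomposed along a fixed spanning tree of $\CCC$ into a concatenation of simple fundamental loops (one per non-tree edge), and concatenation adds drift vectors, the drift vectors of the fundamental loops form a generating set of $U^{-1}(\Lambda_\CCC)$. There are at most $m$ such fundamental loops, but what matters more is that each fundamental loop is \emph{simple}: it consists of a single non-tree edge closed up by the unique path in the spanning tree, and this path visits each vertex at most once. Hence each fundamental loop uses at most $m$ edges of $K/\Lambda$ (in fact at most $n$, but $m$ suffices for the claimed bound).

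Next I would estimate the magnitude of each generating vector. By the definition of $\Shift{a}$, every edge of $K/\Lambda$ contributes, in each coordinate, an integer of absolute value at most $D$. The drift vector of a fundamental loop is the signed sum of at most $m$ such shift vectors, so by the triangle inequality applied coordinate-wise, its magnitude is at most $m D$. Thus $U^{-1}(\Lambda_\CCC)$ admits a (possibly redundant) integer generating set whose matrix has magnitude at most $m D$.

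Finally I would apply the Hermite-normal-form magnitude bound mentioned after Lemma~\ref{lem:time_for_reduction}: if a sublattice of $\Zspace^d$ is spanned by integer vectors of magnitude at most $N$, then its (unique) Hermite normal form has magnitude at most $(\sqrt{d} N)^d$. Plugging in $N = m D$ yields $\norm{V}_\infty \leq (\sqrt{d} \, D m)^d$, as claimed. The only delicate step is verifying that the fundamental loops really do generate $U^{-1}(\Lambda_\CCC)$ (and not merely a sublattice); this follows from the standard cycle-space argument, since every loop at the root is a $\Zspace$-combination of fundamental loops, and the drift map is a group homomorphism from loops to $\Zspace^d$. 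Once that is in hand, the remaining ingredients are a coordinate-wise triangle inequality and a direct appeal to the cited HNF bound, so no further obstacle is expected.
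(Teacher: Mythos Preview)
Your proposal is correct and follows essentially the same route as the paper: generate $U^{-1}(\Lambda_\CCC)$ by the drift vectors of the fundamental cycles of a spanning tree, bound each such drift vector's magnitude by $Dm$ via the coordinate-wise triangle inequality over at most $m$ edges, and then invoke the Hermite-normal-form magnitude bound $(\sqrt{d}N)^d$ with $N = Dm$. The paper's proof is terser but uses exactly these three ingredients in the same order.
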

\begin{proof}
  Take a spanning tree of $\CCC$, and for each edge not in this tree, take the drift vector of the simple loop for which this edge is the sole edge not in the tree.
  There are at most $m$ edges in this loop, so its drift vector has magnitude at most $Dm$.
  We get fewer than $m$ such vectors, which define the lattice of the periodicity lattice of $\CCC$.
  Letting $M$ be the matrix whose columns are these drift vectors, we compute a basis by reducing $M$ to Hermite normal form.
  Since $\norm{M{_\infty}} \leq Dm$, this implies that the magnitude of the reduced matrix is at most $(\sqrt{d} Dm)^d$.
\end{proof}

The overall algorithm constructs a periodic merge tree by incrementally adding the vertices and edges of $K / \Lambda$.
Whenever an edge connects two components (a merge event) or forms a loop (a possible catenation), we use the reduction algorithm to compute a basis from two bases.
By Lemma~\ref{lem:magnitude_of_basis}, the magnitude of the input matrices is at most $(\sqrt{d} Dm)^d$.
Indeed, as the Hermite normal form of a lattice is unique, the iteratively computed Hermite normal form from our algorithm agrees with the Hermite normal form from Lemma~\ref{lem:magnitude_of_basis}.
\begin{theorem}[Time for Construction]
  \label{thm:time_for_construction}
  Let $\Lambda \subseteq \Rspace^d$ be a $d$-dimensional lattice with given basis, $\filter \colon K \to \Rspace$ a $\Lambda$-periodic filter, $n$ and $m$ the number of vertices and edges in the quotient complex, $K / \Lambda$, and $D$ the maximum magnitude of the shift vectors.
  Then the periodic merge tree of $\filter$ can be constructed in time $\bigOh{n \log n + m 2^d d^{d+3} \log (\sqrt{d} Dm)}$.
\end{theorem}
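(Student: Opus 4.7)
The plan is to split the work into two conceptually independent pieces: the union-find machinery for tracking which vertices currently belong to which connected component, and the lattice-reduction machinery for updating the bases of the periodicity lattices. The two pieces are loosely coupled, in that every edge triggers at most a constant number of find operations, at most one union, and at most one call to \textsc{Reduce}, so it suffices to bound each piece separately and add.

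For the union-find piece I would invoke the analysis already given at the end of Section~\ref{sec:3.3}. There are $n-1$ union operations and at most $2m$ find operations. Under the ``smaller-into-larger'' strategy implemented in lines {\footnotesize 01}--{\footnotesize 08}, a vertex can only have its $\Root{\cdot}$ and $\Drift{\cdot}$ fields rewritten when it ends up inside a component at least twice the size of its previous one, so each vertex is touched at most $\log_2 n$ times by a relabeling loop. Since each find is constant time and each relabeling step is constant time, this part contributes $\bigOh{n \log n + m}$, which is absorbed by the claimed bound.

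For the lattice-reduction piece I would apply Lemmas~\ref{lem:time_for_reduction} and \ref{lem:magnitude_of_basis} as black boxes, taking care to control the two parameters they depend on: the number of columns $c$ of the matrix presented to \textsc{Reduce}, and its magnitude $N$. Every call to \textsc{Reduce} has one of two forms: either it augments a Hermite normal form (at most $d$ columns) by a single loop-drift vector, giving $c \leq d+1$; or it concatenates two Hermite normal forms, giving $c \leq 2d$. In both cases $c = \bigOh{d}$. By Lemma~\ref{lem:magnitude_of_basis}, the magnitudes of the bases maintained throughout the algorithm are bounded by $(\sqrt{d}Dm)^d$, and since the Hermite normal form is unique, this bound applies to the inputs of every \textsc{Reduce} call. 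Hence $\log(2N) = \bigOh{d \log(\sqrt{d}Dm)}$, and Lemma~\ref{lem:time_for_reduction} bounds the cost of a single reduction by
\begin{align*}
  \bigOh{d^2 (2d)^d \cdot d \log(\sqrt{d} D m)} = \bigOh{2^d d^{d+3} \log(\sqrt{d} D m)}.
\end{align*}
Each edge triggers at most one such call, so summing over the $m$ edges yields $\bigOh{m\, 2^d d^{d+3} \log(\sqrt{d} D m)}$, and adding the union-find contribution finishes the proof.

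The main subtlety I expect is the column-count bookkeeping: one has to be sure that the matrices fed to \textsc{Reduce} never accumulate more than $\bigOh{d}$ columns across the life of the algorithm. This is guaranteed precisely because \textsc{Reduce} returns a Hermite normal form, which has at most $d$ nonzero columns; if we instead appended fresh drift vectors lazily (say, storing all loop-drift vectors encountered so far and reducing only at the end), $c$ could grow to $\bigOh{m}$ and the bound of Lemma~\ref{lem:time_for_reduction} would blow up exponentially in $m$. Calling \textsc{Reduce} eagerly after every edge insertion keeps the input size at $\bigOh{d}$ and, together with the magnitude bound of Lemma~\ref{lem:magnitude_of_basis}, is what makes the per-edge cost depend only logarithmically on $m$ and $D$.
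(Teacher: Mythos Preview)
Your proposal is correct and follows essentially the same route as the paper: split into the union-find cost $\bigOh{n\log n + m}$ and the per-edge reduction cost, bound the latter by plugging $c \leq 2d$ and $N \leq (\sqrt{d}Dm)^d$ into Lemma~\ref{lem:time_for_reduction}, and sum over $m$ edges. Your added discussion of why eager reduction keeps $c = \bigOh{d}$ is a nice clarification the paper leaves implicit.
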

\begin{proof}
  The union-find data structure is maintained in constant time per edge and amortized logarithmic time per vertex, so $\bigOh{n \log n + m}$ time in total.
  The reduction algorithm is used at most once for each edge, and each time reduces an integer matrix with $d$ rows and at most $2d$ columns.
  By Lemma~\ref{lem:magnitude_of_basis}, its magnitude is at most $(\sqrt{d} Dm)^d$, and by Lemma~\ref{lem:time_for_reduction}, the time to reduce the matrix is at most $\bigOh{d^2 (2d)^d \log (2(\sqrt{d} Dm)^d)}$.
  Assuming $(\sqrt{d} D m)^d \geq 2$, we can simplify the bound to $\bigOh{d^3 (2d)^d \log (\sqrt{d}Dm)}$.
  After multiplying with the number of edges, we get $\bigOh{m d^3 (2d)^d \log (\sqrt{d} Dm)}$ time in total, which implies the claimed bound.
\end{proof}

In the important application of our algorithm to crystalline materials, the dimension is a constant, $d=3$.
Using the filtration of alpha complexes in the torus, it is typical that each vertex (atom center) is connected to at most some constant number of other vertices, so the number of edges is $m = \bigOh{n}$.
Similarly typical is that entries in the shift vectors are bounded by a small constant, $D$.
In this case, the algorithm constructs the periodic merge tree in time at most $\bigOh{n \log n}$.

\section{Properties}
\label{sec:4}

This section studies the invariance under different choices of lattices and the stability with respect to perturbations.
We begin with the introduction of a notion of distance between periodic merge trees, and follow up with an equivalence relation among periodic merge trees and the corresponding quotient pseudo-metric.

\subsection{Interleaving Distance}
\label{sec:4.1}

We adapt the interleaving distance between ordinary merge trees introduced in \cite{MBW13} to periodic merge trees by including the frequency function, which maps each point of the tree to its shadow monomial.
Recall that a point $\BBB \in \MTree$ covers $\AAA \in \MTree$ if $\Height(\AAA) \leq \Height(\BBB)$ and $\AAA \subseteq \BBB$.
\begin{definition}[Interleaving Distance]
  \label{dfn:interleaving_distance}
  Let $\MTree$ and $\MTree'$ be two periodic merge trees, with height functions $\Height \colon \MTree \to \Rspace$, $\Height' \colon \MTree' \to \Rspace$ and frequency functions $\Frequency \colon \MTree \to \Monomial [R]$, $\Frequency' \colon \MTree' \to \Monomial [R]$.
  For $\ee \geq 0$, continuous maps, $\varphi \colon \MTree \to \MTree'$ and $\psi \colon \MTree' \to \MTree$ are \emph{$\ee$-compatible} if for all $\CCC \in \MTree$, $\CCC' \in \MTree'$,
  \smallskip \begin{enumerate}[(i)]
    \item $\Height' \circ \varphi (\CCC) = \Height(\CCC) + \ee$ and $\Height \circ \psi (\CCC') = \Height'(\CCC') + \ee$;
    \item $\psi \circ \varphi (\CCC)$ covers $\CCC$ and $\varphi \circ \psi (\CCC')$ covers $\CCC'$;
    \item $\Frequency' \circ \varphi (\CCC) \leq \Frequency (\CCC)$ and $\Frequency \circ \psi (\CCC') \leq \Frequency' (\CCC')$.
  \end{enumerate} \smallskip
  The \emph{interleaving distance} between $\MTree$ and $\MTree'$, denoted $\Idist{\MTree}{\MTree'}$, is the infimum of the $\ee \geq 0$ for which there exist $\ee$-compatible maps from $\MTree$ to $\MTree'$ and back.
\end{definition}
We focus on \emph{finite} periodic merge trees, for which the infimum in Definition~\ref{dfn:interleaving_distance} is a minimum.
Throughout this section, we will therefore assume finite trees, with the benefit of simpler arguments involving the interleaving distance.

\smallskip
As an example, consider the two periodic merge trees in Figure~\ref{fig:TwoTreesTwo}.
Writing $\MTree$ for the tree in the left panel and $\MTree'$ for the tree in the right panel,
we need continuous maps $\varphi$ and $\psi$ whose compositions increase the height of each point by $2\ee$ in its own tree (i.e.\ move the point to the right in our drawing).
For small $\ee$, this is not possible because $\MTree'$ has two subtrees rooted at the point with height $7.0$, while $\MTree$ has only one such subtree.
The minimum height of any point is $1.0$, and since $\ee$ is at least half the height difference, we get $\ee \geq 3.0$.
We get another constraint from Condition {\sf (iii)}: the shadow monomial cannot increase from a point to its image.
All points with height less than $7.0$ in $\MTree$ have shadow monomials larger than those of the points with height $1.0$ in $\MTree'$, hence $\ee \geq 6.0$.
Indeed, for $\ee = 6.0$ we get maps $\varphi$ and $\psi$ that are $\ee$-compatible, which implies $\Idist{\MTree}{\MTree'} = 6.0$.

\smallskip
We extend the proof of the interleaving distance being a metric for merge trees \cite{MBW13} to periodic merge trees, but note that \cite{MBW13} missed the verification of positivity, which we add in our extension.
\begin{lemma}[Interleaving Distance is a Metric]
  \label{lem:interleaving_distance_is_metric}
  Let $\MTree, \MTree', \MTree''$ be periodic merge trees.
  Then
  \begin{itemize}
    \item $\Idist{\MTree}{\MTree'} \geq 0$ and $\Idist{\MTree}{\MTree'} > 0$ iff $\MTree \neq \MTree'$ (positivity);
    \item $\Idist{\MTree}{\MTree'} = \Idist{\MTree'}{\MTree}$ (symmetry);
    \item $\Idist{\MTree}{\MTree'} + \Idist{\MTree'}{\MTree''} \geq \Idist{\MTree}{\MTree''}$ (triangle inequality).
  \end{itemize}
\end{lemma}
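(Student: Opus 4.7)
The plan is to verify the three metric axioms in turn, relying on the fact that finite periodic merge trees come equipped with continuous maps for which the infimum in Definition~\ref{dfn:interleaving_distance} is attained. Symmetry is immediate: if $\varphi\colon\MTree\to\MTree'$ and $\psi\colon\MTree'\to\MTree$ are $\ee$-compatible, then swapping their roles yields $\ee$-compatible maps from $\MTree'$ to $\MTree$, so the sets of admissible $\ee$ coincide.

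For the triangle inequality, I would take $\ee$-compatible maps $\varphi,\psi$ between $\MTree$ and $\MTree'$, and $\ee'$-compatible maps $\varphi',\psi'$ between $\MTree'$ and $\MTree''$, and show that the compositions $\varphi'\circ\varphi\colon\MTree\to\MTree''$ and $\psi\circ\psi'\colon\MTree''\to\MTree$ are $(\ee+\ee')$-compatible. Condition {\sf (i)} on heights is a direct chain: $\Height''\circ\varphi'\circ\varphi(\CCC)=\Height'\circ\varphi(\CCC)+\ee'=\Height(\CCC)+\ee+\ee'$, and similarly for $\psi\circ\psi'$. Condition {\sf (iii)} on frequencies is an analogous chain using $\Frequency''\circ\varphi'\leq\Frequency'$ followed by $\Frequency'\circ\varphi\leq\Frequency$. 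For condition {\sf (ii)}, I need the elementary fact that continuous height-shifting maps preserve the covering relation: if $\AAA\subseteq\BBB$ with $\Height(\AAA)\leq\Height(\BBB)$, then $\varphi(\AAA)\subseteq\varphi(\BBB)$, since $\varphi$ maps each upward trajectory of $\MTree$ continuously into an upward trajectory of $\MTree'$ and matches heights up to the shift $\ee$. With this, covering is transitive along the composition: from $\varphi(\CCC)\subseteq\psi'\circ\varphi'(\varphi(\CCC))$ I apply $\psi$ (which preserves covering) and combine with $\CCC\subseteq\psi\circ\varphi(\CCC)$ to conclude $\CCC\subseteq(\psi\circ\psi')\circ(\varphi'\circ\varphi)(\CCC)$, and symmetrically in the other direction.

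Positivity is the step I expect to be most delicate. Non-negativity is built into the definition. If $\MTree=\MTree'$, identity maps are $0$-compatible, so $\Idist{\MTree}{\MTree'}=0$. Conversely, assume $\Idist{\MTree}{\MTree'}=0$; since the trees are finite, the infimum is attained and there exist $0$-compatible maps $\varphi,\psi$. Then $\varphi$ and $\psi$ preserve heights exactly, and $\psi\circ\varphi(\CCC)$ covers $\CCC$ with the same height. The key observation is that two connected components of the \emph{same} sublevel set, one contained in the other, must coincide; hence $\psi\circ\varphi=\mathrm{id}_\MTree$ and symmetrically $\varphi\circ\psi=\mathrm{id}_{\MTree'}$, so $\varphi$ is a continuous bijection with continuous inverse. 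Applying condition {\sf (iii)} both ways and composing gives $\Frequency(\CCC)=\Frequency\circ\psi\circ\varphi(\CCC)\leq\Frequency'\circ\varphi(\CCC)\leq\Frequency(\CCC)$, so $\varphi$ preserves the frequency function too. Together with the height preservation and the homeomorphism of the underlying spaces, $\varphi$ witnesses $\MTree=\MTree'$ as periodic merge trees.

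The main obstacle is the last step: carefully justifying that a pair of $0$-compatible maps forces the two trees to be identified as periodic merge trees, which requires the finiteness hypothesis (so the infimum is a minimum) and the observation that covering collapses to equality at equal heights. The rest is bookkeeping along compositions.
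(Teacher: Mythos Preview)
Your argument is correct. Symmetry matches the paper verbatim, and your triangle-inequality verification is essentially the same mechanism as the paper's: both rely on the fact that an $\ee$-compatible map sends an upward trajectory to an upward trajectory, so covering is preserved. The paper packages this as ``advance $\CCC$ to $\Pi$ at height $\Height(\CCC)+2\ee'$ and use uniqueness of the covering point,'' while you abstract it into ``$\varphi$ and $\psi$ preserve covering'' and then chain inclusions; the content is identical.

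Positivity is where you genuinely diverge. The paper argues the contrapositive combinatorially: it partitions both trees into intervals by the union of their critical values and observes that $\MTree=\MTree'$ iff there is an adjacency- and label-preserving bijection between these intervals; if not, no $\ee$-compatible maps can exist for $\ee$ below the minimum gap between consecutive critical values, giving a strictly positive lower bound on the distance. You instead work directly: invoke finiteness to realize the infimum at $\ee=0$, observe that covering at equal height forces equality of components so $\psi\circ\varphi$ and $\varphi\circ\psi$ are identities, and then squeeze the frequencies via condition {\sf (iii)} in both directions to obtain a height- and frequency-preserving homeomorphism. Your route is cleaner and more structural, and it makes explicit use of the paper's standing assumption that the infimum is attained for finite trees; the paper's route avoids needing attainment for this particular step but leaves the implication ``no bijection $\Rightarrow$ no small-$\ee$ compatible maps'' unproven. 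Either way the conclusion stands.
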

\begin{proof}
  \emph{Positivity}:
  Clearly, $\Idist{\MTree}{\MTree'} = 0$ if $\MTree = \MTree'$.
  It remains to show $\Idist{\MTree}{\MTree'} > 0$ if $\MTree \neq \MTree'$.
  Recall that a complex in the $d$-dimensional torus is necessarily finite, so there are only finitely many critical events in the construction of the corresponding periodic merge tree.
  Let $t_0 < t_1 < \ldots < t_k$ be the values of the critical events that arise in the construction of $\MTree$ and $\MTree'$.
  They decompose both trees into a finite collection of intervals, each connecting points at consecutive critical values.
  There is an adjacency preserving bijection between the two collections of intervals that preserves heights and shadow monomials iff $\MTree = \MTree'$.
  But if there is no such bijection, then there are also no $\ee$-compatible maps for $\ee$ smaller than the minimum difference between consecutive critical values.
  Hence, $\Idist{\MTree}{\MTree'} > 0$.

  \smallskip \noindent
  \emph{Symmetry}:
  Since we can switch $\varphi$ and $\psi$ in Definition~\ref{dfn:interleaving_distance}, we have $\Idist{\MTree}{\MTree'} \leq \ee$ iff $\Idist{\MTree'}{\MTree} \leq \ee$, for every $\ee \geq 0$.
  Hence, $\Idist{\MTree}{\MTree'} = \Idist{\MTree'}{\MTree}$.

  \smallskip \noindent
  \emph{Triangle inequality}:
  Writing $\ee = \Idist{\MTree}{\MTree'}$ and $\ee' = \Idist{\MTree'}{\MTree''}$, there exist $\ee$-compatible maps $\varphi \colon \MTree \to \MTree'$, $\psi \colon \MTree' \to \MTree$ and $\ee'$-compatible maps $\varphi' \colon \MTree' \to \MTree''$, $\psi' \colon \MTree'' \to \MTree'$.
  It is easy to see that $\varphi' \circ \varphi \colon \MTree \to \MTree''$ and $\psi \circ \psi' \colon \MTree'' \to \MTree$ satisfy {\sf (i)} and {\sf (iii)} of Definition~\ref{dfn:interleaving_distance} for $\ee + \ee'$.
  To see {\sf (iii)}, we continuously advance $\CCC$ to the point $\Pi \in \MTree$ at height $\Height (\Pi) = \Height (\CCC) + 2 \ee'$.
  By continuity of $\varphi$, and because there is only one point of $\MTree'$ whose height is $\Height' (\varphi (\CCC)) + 2\ee'$ that covers $\varphi (\CCC)$, this point must be $\varphi (\Pi) = \psi' \circ \varphi' \circ \varphi (\CCC)$.
  Hence, $\psi \circ \varphi (\Pi) = \psi \circ \psi' \circ \varphi' \circ \varphi (\CCC)$.
  Since $\psi \circ \varphi (\Pi)$ covers $\Pi$ and therefore $\CCC$, this implies that $\varphi' \circ \varphi$ and $\psi \circ \psi'$ satisfy {\sf (ii)}
  for $\MTree$ and similarly 
  for $\MTree''$, which implies $\Idist{\MTree}{\MTree''} \leq \ee+\ee'$.
\end{proof}

\subsection{Splintering Periodic Merge Trees}
\label{sec:4.2}

While the periodic merge tree is independent of the choice of basis of a given lattice, it heavily depends on the lattice used for the filter; see Figure~\ref{fig:TwoTreesTwo}, where we get interleaving distance $\ee = 6.0$ just because we switch from the integer lattice on the left to a sublattice that drops every other column of integer points on the right.
We desire a notion of distance that tolerates different lattices, provided the filter is periodic with respect to both.
To this end, we introduce an equivalence relation and work with the corresponding quotient of the interleaving distance.
For each point $\CCC \in \MTree$, write $\MTree_\CCC$ for the subtree with topmost point $\CCC$; that is: $\MTree_\CCC$ consists of all points $\AAA \in \MTree$ covered by $\CCC$.
\begin{definition}[Equivalence by Splintering]
  \label{dfn:equivalence_by_splintering}
  Let $\MTree$ and $\MTree'$ be two periodic merge trees with height functions $\Height \colon \MTree \to \Rspace$, $\Height' \colon \MTree' \to \Rspace$ and frequency functions $\Frequency \colon \MTree \to \Monomial [R]$, $\Frequency' \colon \MTree' \to \Monomial [R]$.
  We say $\MTree'$ \emph{splinters} $\MTree$ if there exists a continuous surjection $\omega \colon \MTree' \to \MTree$ such that
  \smallskip \begin{enumerate}[(i)]
    \item the surjection preserves height: $\Height \circ \omega = \Height'$;
    \item it splits subtrees evenly: for every $\CCC \in \MTree$ and all $\AAA, \BBB \in \omega^{-1} (\CCC)$, the subtrees at $\AAA$ and $\BBB$ are equal, $\Idist{\MTree_\AAA'}{\MTree_\BBB'} = 0$,
    $\omega$ maps $\MTree_\AAA'$ as well as $\MTree_\BBB'$ surjectively onto $\MTree_\CCC$,
    and the shadow monomials are $\Frequency' (\AAA) = \Frequency'(\BBB) = \Frequency (\CCC) / \card{\omega^{-1} (\CCC)}$.
  \end{enumerate} \smallskip
  We call two periodic merge trees \emph{equivalent}, denoted $\MTree \simeq \NTree$, if there exists a periodic merge tree that splinters both, and we write $[\MTree]$ for the class of periodic merge trees equivalent to $\MTree$.
\end{definition}
To be certain that ``$\simeq$'' is indeed an equivalence relation, we need to verify that it is transitive.
First note that splintering is transitive: if $\MTree'$ splinters $\MTree$ and $\MTree''$ splinters $\MTree'$, then $\MTree''$ also splinters $\MTree$.
Second observe that two splinterings of the same tree can be composed: if $\MTree'$ and $\NTree'$ both splinter $\NTree$, then there exists $\MTree''$ that splinters both $\MTree'$ and $\NTree'$.
This operation acts like point-wise multiplication: if $\CCC \in \NTree$ has $i$ preimages in $\MTree'$ and $j$ preimages in $\NTree'$, then it has $ij$ preimages in $\MTree''$.
To see that this is well defined, we traverse the points of the two trees in parallel and in the order of decreasing height, making sure that the split is even whenever we encounter a point of bifurcation.
With this we argue the transitivity of ``$\simeq$'': assuming $\MTree \simeq \NTree \simeq \OTree$, there exist trees $\MTree'$ and $\NTree'$ splintering $\MTree, \NTree$ and $\NTree, \OTree$, respectively.
Since $\MTree'$ and $\NTree'$ both splinter $\NTree$, there exists $\MTree''$ that splinters $\MTree', \NTree'$.
By transitivity of splintering, $\MTree''$ also splinters $\MTree$ as well as $\OTree$, so $\MTree \simeq \OTree$ as required.

\smallskip
Splintering happens, for example, when we construct the periodic merge tree of a $\Lambda$-periodic filter for a sublattice of $\Lambda$; see Figure~\ref{fig:TwoTreesTwo}.
By combining Definitions~\ref{dfn:interleaving_distance} and \ref{dfn:equivalence_by_splintering}, we obtain a pseudo-distance function that considers equivalent such trees the same.
For this, we use the quotient pseudo-metric of a metric space with respect to an equivalence relation \cite[Definition~3.1.12]{BBI01}.
The proof that this yields a pseudo-metric is both standard and trivial. 
\begin{definition}[Interleaving Pseudo-distance]
  \label{dfn:interleaving_pseudo-distance}
  Let $[\MTree]$ and $[\MTree']$ be two equivalence classes of periodic merge trees.
  The \emph{interleaving pseudo-distance} between them is the infimum, over all sequences of equivalence classes and two trees per class, of the sum of interleaving distances:
  \begin{align}
    \Jdist{[\MTree]}{[\MTree']} &= \inf \left\{ \sum\nolimits_{i=1}^{k} \Idist{\MTree_i}{\MTree_i'} \right\} ,
  \end{align}
  in which $\MTree_1 \in [\MTree]$, $\MTree_k' \in [\MTree']$, and $\MTree_i' \simeq \MTree_{i+1} $ for all $1 \leq i \leq k-1$.
\end{definition}
As an example, consider again the two periodic merge trees in Figure~\ref{fig:TwoTreesTwo}.
Writing $\MTree$ and $\MTree'$ for the tree in the left and right panels, respectively, we recall that the interleaving distance is $\Idist{\MTree}{\MTree'} = 6.0$.
The interleaving pseudo-distance is however zero, because $\MTree'$ splinters $\MTree$ and thus they are in the same equivalence class. Indeed, we get $\MTree'$ by duplicating the subtree of $\MTree$ below the point at height $7.0$ and assigning half the shadow monomial to each point of the two copies of the subtree.

\subsection{Invariance of Equivalence Classes}
\label{sec:4.3}

As mentioned earlier, splintering occurs if we enlarge the unit cell by substituting a sublattice for the original lattice in the construction of the periodic merge tree.
We formalize this claim and prove it.
\begin{lemma}[Splintering from Sublattice]
  \label{lem:splintering_from_sublattice}
  Let $\Lambda \subseteq \Rspace^d$ be a $d$-dimensional lattice, $\filter \colon K \to \Rspace$ a $\Lambda$-periodic filter, and $\Lambda' \subseteq \Lambda$ a $d$-dimensional sublattice.
  Then $\perMTree{\filter}{\Lambda'}$ splinters $\perMTree{\filter}{\Lambda}$.
\end{lemma}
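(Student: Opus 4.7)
The plan is to let the continuous covering map induced by the inclusion $\Lambda' \subseteq \Lambda$ play the role of the splintering surjection $\omega$, and then verify in turn the three conditions of Definition~\ref{dfn:equivalence_by_splintering}. Since any $\Lambda$-periodic filter is automatically $\Lambda'$-periodic, both $\MTree = \perMTree{\filter}{\Lambda}$ and $\MTree' = \perMTree{\filter}{\Lambda'}$ are defined. The projection $\pi \colon K/\Lambda' \to K/\Lambda$ is a continuous covering map with deck group the finite abelian group $\Lambda/\Lambda'$; it preserves filter values and sends every connected component $\AAA$ of $(K/\Lambda')_t$ into a connected component $\omega(\AAA)$ of $(K/\Lambda)_t$. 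This descends to a continuous and surjective $\omega \colon \MTree' \to \MTree$ that trivially satisfies $\Height \circ \omega = \Height'$.

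Next I would check that $\omega$ splits subtrees evenly. Fix $\CCC \in \MTree$ at height $t$. The fiber $\omega^{-1}(\CCC)$ is the set of connected components of $\pi^{-1}(\CCC) \subseteq (K/\Lambda')_t$. Because $\CCC$ is connected and $\Lambda/\Lambda'$ acts on $K/\Lambda'$ by filter-preserving deck transformations, this action is transitive on $\omega^{-1}(\CCC)$. For any $\AAA, \BBB \in \omega^{-1}(\CCC)$ and any representative $u \in \Lambda$ whose class carries $\AAA$ to $\BBB$, translation by $u$ is a filter-preserving self-isomorphism of $K/\Lambda'$, and its restriction yields an isomorphism $\MTree'_\AAA \to \MTree'_\BBB$ that preserves heights and frequencies, so $\Idist{\MTree'_\AAA}{\MTree'_\BBB} = 0$. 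For surjectivity of $\omega|_{\MTree'_\AAA}$ onto $\MTree_\CCC$, each point $\CCC^+ \in \MTree_\CCC$ is a component of some $(K/\Lambda)_{t'}$ containing $\CCC$, and the unique component of $\pi^{-1}(\CCC^+)$ that contains $\AAA$ is mapped by $\omega$ to $\CCC^+$.

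Finally I would verify the shadow-monomial identity. Let $\Lambda_\CCC \subseteq \Lambda$ be the periodicity lattice of $\CCC$ and $\Lambda'_\AAA \subseteq \Lambda'$ that of $\AAA$. Since an element of $\Lambda'$ fixes $\AAA$ iff it fixes the underlying shadow of $\CCC$, we get $\Lambda'_\AAA = \Lambda' \cap \Lambda_\CCC$; because $\Lambda/\Lambda'$ is finite, every $v \in \Lambda_\CCC$ has a positive integer multiple in $\Lambda' \cap \Lambda_\CCC$, so $\Span{\Lambda'_\AAA} = \Span{\Lambda_\CCC}$ and both lattices have the same dimension $p$. The shadows of $\CCC$ are indexed by $\Lambda/\Lambda_\CCC$, and two shadows collapse in $K/\Lambda'$ iff they differ by an element of $\Lambda'$; hence the $\Lambda'$-orbits give
\begin{align}
  \card{\omega^{-1}(\CCC)} &= [\Lambda : \Lambda' + \Lambda_\CCC]
    = \frac{[\Lambda : \Lambda']}{[\Lambda_\CCC : \Lambda_\CCC \cap \Lambda']},
\end{align}
where the second equality invokes the isomorphism $(\Lambda' + \Lambda_\CCC)/\Lambda' \cong \Lambda_\CCC/(\Lambda' \cap \Lambda_\CCC)$. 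Translating indices into volumes via $\volume{d}{\Lambda'}/\volume{d}{\Lambda} = [\Lambda : \Lambda']$ and $\volume{p}{\Lambda'_\AAA}/\volume{p}{\Lambda_\CCC} = [\Lambda_\CCC : \Lambda'_\AAA]$, direct substitution into Definition~\ref{dfn:shadow_monomial} yields $\Frequency'(\AAA) = \Frequency(\CCC)/\card{\omega^{-1}(\CCC)}$, completing condition (iii).

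The step I expect to carry the most substance is this last one: one must match a group-theoretic count of preimages against a volume-ratio definition of the shadow monomial, which works out cleanly only after equalizing the dimensions of $\Lambda_\CCC$ and $\Lambda'_\AAA$ and invoking the second isomorphism theorem to rewrite the index $[\Lambda : \Lambda' + \Lambda_\CCC]$ in a form compatible with the volume formulas.
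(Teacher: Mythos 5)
Your proposal takes essentially the same route as the paper: use the natural covering projection $\pi \colon K/\Lambda' \to K/\Lambda$ to induce the splintering surjection $\omega$, then verify the conditions of Definition~\ref{dfn:equivalence_by_splintering}. Your computation of $\card{\omega^{-1}(\CCC)}$ is a cleaner and more explicit version of the paper's argument: you identify $\Lambda'_\AAA = \Lambda' \cap \Lambda_\CCC$, observe that the preimages of $\CCC$ are indexed by $\Lambda/(\Lambda' + \Lambda_\CCC)$, and use the second isomorphism theorem $(\Lambda' + \Lambda_\CCC)/\Lambda' \cong \Lambda_\CCC/(\Lambda' \cap \Lambda_\CCC)$ to rewrite the index in volume-friendly form; the paper instead argues geometrically that out of the $\degree{\phi}$ preimages of any point, $\volume{p}{\Lambda_{\CCC'}}/\volume{p}{\Lambda_\CCC}$ of them land in the same component. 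Both yield the same final formula.

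One imprecision: your surjectivity argument for the subtree restriction has the containment backwards. Since $\MTree_\CCC$ consists of the points \emph{covered by} $\CCC$, a point $\CCC^+ \in \MTree_\CCC$ is a component at height $t' \leq \Height(\CCC)$ with $\CCC^+ \subseteq \CCC$, not the other way around; and what you need is a component of $\pi^{-1}(\CCC^+)$ that is \emph{contained in} $\AAA$, not one that contains $\AAA$ (a component at a lower height cannot contain one at a higher height). Moreover such a component need not be unique within $\AAA$. The correct statement, which the paper uses, is that $\pi|_{\AAA}$ surjects onto $\CCC$, so $\pi|_{\AAA}^{-1}(\CCC^+)$ is nonempty and any of its components maps to $\CCC^+$ under $\omega$; existence is all that is needed.
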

\begin{proof}
  To show that 
  $\MTree' = \perMTree{\filter}{\Lambda'}$ splinters  $\MTree = \perMTree{\filter}{\Lambda}$,
  we study the projection $\phi \colon \Rspace^d / \Lambda' \to \Rspace^d / \Lambda$ defined by mapping every point $x + \Lambda'$ to $x + \Lambda$.
  Since $\dime{\Lambda'} = \dime{\Lambda}$, the degree of the map is finite, namely $\degree{\phi} = \volume{d}{\Lambda} / \volume{d}{\Lambda}$, and the image of $\phi$ covers every point in the $d$-dimensional torus $\degree{\phi}$ times.
  The projection induces a surjection, $\omega \colon \MTree' \to \MTree$, that maps every point $\CCC' \in \MTree'$ to the point $\CCC = \omega (\CCC') \in \MTree$ with $\Height(\CCC) = \Height'(\CCC')$ and $\phi (\CCC') = \CCC$, where we note that $\CCC$ and $\CCC'$ are connected components of sublevel sets.
  The continuity of $\omega$ follows from the continuity of $\phi$ and the definition of quotient topology as the final topology with respect to the quotient map.
  By construction, $\omega$ preserves height, so it satisfies Condition~{\sf (i)} of Definition~\ref{dfn:equivalence_by_splintering}.

  \smallskip
  To prove Condition~{\sf (ii)}, we note that all preimages of $\CCC$ are translates of each other and thus have the same periodicity lattice and shadow monomials. 
  Similarly, their subtrees are identical. 
  To show that $\omega$ maps subtrees surjectively to the appropriate subtrees, recall that $\phi (\CCC') = \CCC$. Hence, for every component $\AAA$ in the subtree at $\CCC$, we have $\phi|_{\CCC'}^{-1} (\AAA) \neq \emptyset$, and hence at least one component at height $\Height(\AAA)$ in the subtree at $\CCC'$ that $\omega$ maps to $\AAA$.
  The only part of Condition~{\sf (ii)} left to prove is the shadow monomial, $\Frequency(\CCC')$. 
  To this end, note that the cardinality of the preimage, $\card{\omega^{-1} (\CCC)}$, depends on $\degree{\phi}$ but also on the periodicity lattices of $\CCC$ and $\CCC'$.
  By construction, these two lattices have the same dimension, $p$.
  The number of times the projection of $\CCC'$ covers $\CCC$ is thus finite, namely $\volume{p}{\Lambda_{\CCC'}} / \volume{p}{\Lambda_{\CCC}}$.
  Therefore, out of the $\degree{\phi}$ preimages of every point in $\Gamma$, $\volume{p}{\Lambda_{\CCC'}} / \volume{p}{\Lambda_{\CCC}}$ stem from the same component, so the number of different components is
  \begin{align}
    \card{\omega^{-1} (\CCC)}
       = \degree{\phi} \cdot \frac{\volume{p}{\Lambda_\CCC}}{{\volume{p}{\Lambda_{\CCC'}}}} .
  \end{align}
  This yields the following calculation for the shadow monomial:
  \begin{align}
    \Frequency' (\CCC') &= \frac{\volume{p}{\Lambda_{\CCC'}}}{\volume{d}{\Lambda'}} \cdot \nu_{d-p} R^{d-p} \\
      &= \frac{\volume{p}{\Lambda_{\CCC}}}{\volume{d}{\Lambda}} \cdot \nu_{d-p} R^{d-p} / \left[ \frac{\volume{d}{\Lambda'}}{\volume{d}{\Lambda}} \cdot \frac{\volume{p}{\Lambda_\CCC}}{\volume{p}{\Lambda_{\CCC'}}} \right] = \frac{\Frequency (\CCC)}{\card{\omega^{-1} (\CCC)}} ,
    \label{eqn:frequency2}
  \end{align}
  in which the left-hand side of \eqref{eqn:frequency2} is obtained using trivial substitutions, and the right-hand side follows because the expression in squared brackets is $1 / \card{\omega^{-1} (\CCC)}$.
  We conclude that $\omega$ satisfies all conditions of  Definition~\ref{dfn:equivalence_by_splintering}.
\end{proof}

This implies the first main property of periodic merge trees we set out to prove in this section: that their equivalence classes are invariant under different lattices and different bases of the same lattice used in the construction of the trees. 
\begin{theorem}[Invariance of Equivalence Classes of Periodic Merge Trees]
  \label{thm:invariance_of_periodic_merge_trees}
  Let $\Lambda \subseteq \Rspace^d$ be a $d$-dimensional lattice, $\filter \colon K \to \Rspace$ a $\Lambda$-periodic filter, and $\Lambda', \Lambda'' \subseteq \Lambda$ two $d$-dimensional sublattices.
  Then 
  $\perMTree{\filter}{\Lambda'}$ and $\perMTree{\filter}{\Lambda''}$
  are equivalent and thus the interleaving pseudo-distance between their equivalence classes vanishes: $\Jdist{[\perMTree{\filter}{\Lambda'}]}{[\perMTree{\filter}{\Lambda''}]} = 0$.
\end{theorem}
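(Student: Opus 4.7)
The plan is to produce a single periodic merge tree that splinters both $\perMTree{\filter}{\Lambda'}$ and $\perMTree{\filter}{\Lambda''}$, place both in the same equivalence class, and then read off that the pseudo-distance vanishes. The natural candidate is the tree associated with the common refinement $\Lambda''' = \Lambda' \cap \Lambda''$.

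First I would verify that $\Lambda'''$ is itself a $d$-dimensional sublattice of $\Lambda$. Since $\Lambda'$ and $\Lambda''$ are full-rank sublattices of $\Lambda$, the indices $k' = [\Lambda \colon \Lambda']$ and $k'' = [\Lambda \colon \Lambda'']$ are finite, so $k'\Lambda \subseteq \Lambda'$ and $k''\Lambda \subseteq \Lambda''$, which gives $k'k''\Lambda \subseteq \Lambda'''$. In particular $\Lambda'''$ has rank $d$, and both inclusions $\Lambda''' \subseteq \Lambda'$ and $\Lambda''' \subseteq \Lambda''$ are $d$-dimensional sublattice inclusions. Because $\filter$ is $\Lambda$-periodic, it is automatically $\Lambda'$-periodic, $\Lambda''$-periodic, and $\Lambda'''$-periodic, so all three periodic merge trees are well defined.

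Next I would invoke Lemma~\ref{lem:splintering_from_sublattice} twice. Applying it with $\Lambda'$ in the role of the outer lattice and $\Lambda'''$ as the sublattice gives that $\perMTree{\filter}{\Lambda'''}$ splinters $\perMTree{\filter}{\Lambda'}$; applying it with $\Lambda''$ as the outer lattice gives that the same tree $\perMTree{\filter}{\Lambda'''}$ splinters $\perMTree{\filter}{\Lambda''}$. Definition~\ref{dfn:equivalence_by_splintering} then directly yields $\perMTree{\filter}{\Lambda'} \simeq \perMTree{\filter}{\Lambda''}$, so the two equivalence classes $[\perMTree{\filter}{\Lambda'}]$ and $[\perMTree{\filter}{\Lambda''}]$ coincide.

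Finally, to obtain $\Jdist{[\perMTree{\filter}{\Lambda'}]}{[\perMTree{\filter}{\Lambda''}]} = 0$, I would use Definition~\ref{dfn:interleaving_pseudo-distance} with the trivial chain $k = 1$: pick $\MTree_1 = \MTree_1' = \perMTree{\filter}{\Lambda'}$, which is legitimate because $\MTree_1' \in [\perMTree{\filter}{\Lambda'}] = [\perMTree{\filter}{\Lambda''}]$, and note that $\Idist{\MTree_1}{\MTree_1'} = 0$. No step presents a real obstacle; the theorem is essentially a direct corollary of Lemma~\ref{lem:splintering_from_sublattice} applied to the common refinement, the only mildly non-routine ingredient being the full-rank check for $\Lambda' \cap \Lambda''$.
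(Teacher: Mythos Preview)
Your proposal is correct and follows exactly the paper's approach: take $\Lambda''' = \Lambda' \cap \Lambda''$, apply Lemma~\ref{lem:splintering_from_sublattice} to each inclusion, and conclude equivalence and hence vanishing pseudo-distance. You in fact supply more detail than the paper does, namely the full-rank verification for $\Lambda' \cap \Lambda''$ and the explicit trivial chain witnessing $\Jdist{\cdot}{\cdot} = 0$.
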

\begin{proof}
  We show that the two trees are equivalent by finding a common splintering.
  For this, consider the lattice $\Lambda'''=\Lambda' \cap \Lambda''$. 
  It is a sublattice of both $\Lambda'$ and $\Lambda''$.
  By Lemma~\ref{lem:splintering_from_sublattice}, 
  $\perMTree{\filter}{\Lambda'''}$ splinters both, $\perMTree{\filter}{\Lambda'}$ and $\perMTree{\filter}{\Lambda''}$,
  which implies that the interleaving pseudo-distance vanishes.
\end{proof}

\subsection{Stability of Periodic Merge Trees}
\label{sec:4.4}

The second property addressed in this section is the stability of periodic merge trees under perturbations of the filter.
More specifically, we show that the interleaving pseudo-distance between the equivalence classes of two periodic merge trees is bounded from above by the $L_\infty$-distance between the two filters.
\begin{figure}[hbt]
  \centering \vspace{0.0in}
  \resizebox{!}{0.9in}{\input{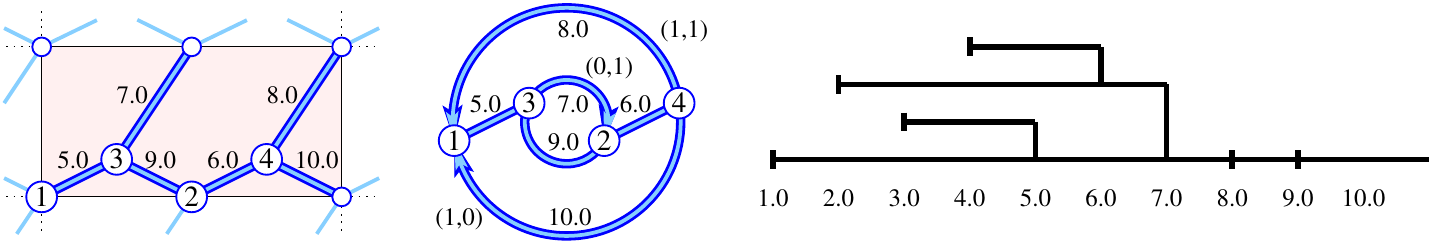_t}}
  \caption{\footnotesize \emph{From left to right:} a perturbation of the periodic graph in the left panel of Figure~\ref{fig:TwoTreesTwo}, the corresponding quotient graph shown with filter values and shift vectors, and the resulting periodic merge tree.
  Its interleaving pseudo-distance to the periodic merge tree in the left panel of Figure~\ref{fig:TwoTreesTwo} is $1.0$.
  This can be seen by first splintering the tree in the left panel to obtain the tree in the right panel of Figure~\ref{fig:TwoTreesTwo}, and then comparing the latter to the periodic merge tree in this figure.
  The intermediate step is necessary because after the perturbation, the graph is no longer periodic with respect to the initially used square lattice.}
  \label{fig:PerturbedTree}
\end{figure}
\begin{theorem}[Stability of Periodic Merge Trees]
  \label{thm:stability_of_periodic_merge_trees}
  Let $\Lambda \subseteq \Rspace^d$ be a $d$-dimensional lattice, and $\filter, \filteraux \colon K \to \Rspace$ two $\Lambda$-periodic filters.
  Then $\Idist{\perMTree{\filter}{\Lambda}}{\perMTree{\filteraux}{\Lambda}} \leq \Maxdist{\filter}{\filteraux}$ and hence $\Jdist{[\perMTree{\filter}{\Lambda}]}{[\perMTree{\filteraux}{\Lambda}]} \leq \Maxdist{\filter}{\filteraux}$.
\end{theorem}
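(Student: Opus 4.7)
Set $\ee = \Maxdist{\filter}{\filteraux}$, and write $\MTree = \perMTree{\filter}{\Lambda}$ and $\MTree' = \perMTree{\filteraux}{\Lambda}$ together with their height and frequency functions. The plan is to adapt the classical construction of interleaving maps between merge trees (as in \cite{MBW13}) and additionally to track how the periodicity lattices behave under these maps. Because $\filter$ and $\filteraux$ disagree by at most $\ee$ on every cell of $K$, the quotient filters satisfy $(K/\Lambda)_t^{\filter/\Lambda} \subseteq (K/\Lambda)_{t+\ee}^{\filteraux/\Lambda}$ for every $t$, and symmetrically with the roles of $\filter$ and $\filteraux$ exchanged. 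Each connected component $\CCC$ of $(K/\Lambda)_t^{\filter/\Lambda}$ is therefore contained in a unique connected component of $(K/\Lambda)_{t+\ee}^{\filteraux/\Lambda}$, which I call $\varphi(\CCC)$; this defines the map $\varphi \colon \MTree \to \MTree'$, and swapping $\filter$ with $\filteraux$ gives $\psi \colon \MTree' \to \MTree$. The main task is then to verify the three conditions of Definition~\ref{dfn:interleaving_distance}.

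Condition~{\sf (i)} holds by construction, since $\varphi$ and $\psi$ shift heights up by exactly $\ee$. For Condition~{\sf (ii)}, I observe the chain of set inclusions $\CCC \subseteq \varphi(\CCC) \subseteq \psi \circ \varphi(\CCC)$ in $K/\Lambda$; since $\psi \circ \varphi(\CCC)$ sits at height $\Height(\CCC) + 2\ee$, this means it covers $\CCC$, and the same reasoning applies to $\varphi \circ \psi$. For Condition~{\sf (iii)}, I extend the lattice argument from the proof of Lemma~\ref{lem:monotonicity}: if $u \in \Lambda_\CCC$ fixes every shadow of $\CCC$, and each shadow of $\varphi(\CCC)$ contains an entire shadow of $\CCC$, then $u$ must also fix every shadow of $\varphi(\CCC)$. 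Hence $\Lambda_\CCC \subseteq \Lambda_{\varphi(\CCC)}$, and reading off Definition~\ref{dfn:shadow_monomial} exactly as in Lemma~\ref{lem:monotonicity} yields $\Frequency' \circ \varphi (\CCC) \leq \Frequency (\CCC)$, symmetrically for $\psi$.

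Continuity of $\varphi$ and $\psi$ I plan to establish via the universal property of the quotient topology: on the sublevel space $\slSpace{K/\Lambda}{\filter/\Lambda}$, the assignment $(x,s) \mapsto (x, s+\ee)$ is a restriction of a continuous map and lands inside $\slSpace{K/\Lambda}{\filteraux/\Lambda}$; after checking that equivalent points go to equivalent points (which is the content of the inclusion of sublevel sets applied to connected components), the induced map on the quotients is continuous. Putting all three conditions together establishes $\ee$-compatibility and thus $\Idist{\MTree}{\MTree'} \leq \ee$. The second inequality follows immediately from Definition~\ref{dfn:interleaving_pseudo-distance} by taking the trivial one-term sequence $\MTree_1 = \MTree$ and $\MTree_1' = \MTree'$.

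The part I expect to be most delicate is Condition~{\sf (iii)}, because it compares periodicity lattices of components living in sublevel sets of two different filters rather than a single one, so Lemma~\ref{lem:monotonicity} does not apply as a black box. The key observation that makes this go through is that the proof of monotonicity depends only on the set-theoretic containment of one component inside another in $K/\Lambda$, not on any specific filtration; once $\CCC \subseteq \varphi(\CCC)$ is in hand, the lattice inclusion follows immediately and the estimate on shadow monomials is a direct consequence of the definition.
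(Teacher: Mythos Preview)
Your proposal is correct and follows essentially the same route as the paper's proof: define $\varphi$ and $\psi$ via the inclusions of sublevel sets, invoke the classical merge-tree stability argument from \cite{MBW13} for Conditions~{\sf (i)} and {\sf (ii)}, and handle the new Condition~{\sf (iii)} by observing that the proof of Lemma~\ref{lem:monotonicity} depends only on the set-theoretic inclusion $\CCC \subseteq \varphi(\CCC)$, not on both components coming from the same filter. Your write-up is in fact more explicit than the paper's in spelling out continuity and the lattice inclusion $\Lambda_\CCC \subseteq \Lambda_{\varphi(\CCC)}$, but the underlying argument is the same.
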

\begin{proof}
  Writing $\MTree = \perMTree{\filter}{\Lambda}$ and $\MTree' = \perMTree{\filteraux}{\Lambda}$ for the two periodic merge trees, we prove $\Idist{\MTree}{\MTree'} \leq \Maxdist{\filter}{\filteraux}$,
  and by definition of the interleaving pseudo-distance in terms of the interleaving distance, we have $\Jdist{[\MTree]}{[\MTree']} \leq \Idist{\MTree}{\MTree'}$, which then implies the claimed inequality.
  Write $\ee = \Maxdist{\filter}{\filteraux}$, and let $\AAA$ and $\AAA'$ be connected components of the sublevel sets of $\filter/\Lambda$ and $\filteraux/\Lambda$ at height $\Height(\AAA) = \Height'(\AAA') = t$, respectively.
  Since $\ee$ is the $L_\infty$-distance between $\filter$ and $\filteraux$, there are unique connected components $\BBB$ and $\BBB'$ of the sublevel sets of $\filter/\Lambda$ and $\filteraux/\Lambda$ at height $t+\ee$, respectively, such that $\AAA \subseteq \BBB'$ and $\AAA' \subseteq \BBB$.
  By the stability theorem for merge trees in \cite{MBW13}, the maps $\varphi \colon \MTree \to \MTree'$ and $\psi \colon \MTree' \to \MTree$ defined by $\varphi (\AAA) = \BBB'$ and $\psi (\AAA') = \BBB$ are $\ee$-compatible as far as the two merge trees are concerned; that is: they satisfy Conditions~{\sf (i)} and {\sf (ii)} of Definition~\ref{dfn:interleaving_distance}.

  \smallskip
  It remains to prove Condition~{\sf (iii)}.
  Since $\AAA \subseteq \BBB'$, an argument analogous to the proof of Lemma~\ref{lem:monotonicity} yields $\Frequency'(\BBB') \leq \Frequency(\AAA)$ and, by symmetry, $\Frequency(\BBB) \leq \Frequency'(\AAA')$.
  Therefore, $\varphi$ and $\psi$ are $\ee$-compatible with respect to the two periodic merge trees, so $\Idist{\MTree}{\MTree'} \leq \ee$ by definition of the interleaving distance between these trees.
\end{proof}

See Figure~\ref{fig:PerturbedTree} for an illustration of the stability in which the perturbation forces a coarser lattice and thus an intermediate step of splintering.
With the help of Theorem~\ref{thm:invariance_of_periodic_merge_trees}, the statement about the interleaving pseudo-distance in Theorem~\ref{thm:stability_of_periodic_merge_trees} can be extended to periodic filters $\filter \colon K \to \Rspace$ and $\filteraux \colon L \to \Rspace$, provided $K$ and $L$ have the same underlying space and a common periodic refinement.

\section{The Periodic Barcode}
\label{sec:5}

Motivated by simplifying the computation of distance, we strip information off the periodic merge tree to construct the periodic analog of the $0$-th barcode or persistence diagram.
Following the structure of the previous section, we introduce a new notion of distance right after defining the concept and prove invariance and stability thereafter.

\subsection{Definition by Construction}
\label{sec:5.1}

In a nut-shell, we get the periodic $0$-th barcode by calling the beams of the periodic merge tree bars while dropping the vertical segments that connect them.
Recall, however, that different epochs along a beam have different shadow monomials, which decrease from left to right, and we need to account for them by possibly substituting more than one bar for each beam.
Specifically, when the monomial decreases from $s \nu_{d-p} R^{d-p}$ to $t \nu_{d-q} R^{d-q}$ at some interior point of a beam, this point marks the end of an era, if $d-q < d-p$, and the end of an epoch inside an era, if $d-q = d-p$.
This change is brought about by $s \nu_{d-p} R^{d-p} - t \nu_{d-q} R^{d-q}$ deaths giving rise to polynomial decorations; see the step from the top left panel to the bottom left panel in Figure~\ref{fig:barcode}.
We recover monomials by introducing two bars that end at this point, one decorated with $s \nu_{d-p} R^{d-p}$ and the other with $-t \nu_{d-q} R^{d-q}$.
This enables the classification of the bars into $d+1$ eras, which is desirable since different growth-rates render the coefficients of the corresponding monomials incommensurable.
We thus obtain the $0$-th barcode as an ensemble of $d+1$ collections of bars, one for each era; see the middle panels in Figure~\ref{fig:barcode}. 
The alternative visualization as an ensemble of $d+1$ points is displayed in the right panels of the same figure.
Within each era we, drop the factor $\nu_{d-p} R^{d-p}$ of the decoration that all bars share.
\begin{figure}[hbt]
  \centering \vspace{0.0in}
  \resizebox{!}{2.1in}{\input{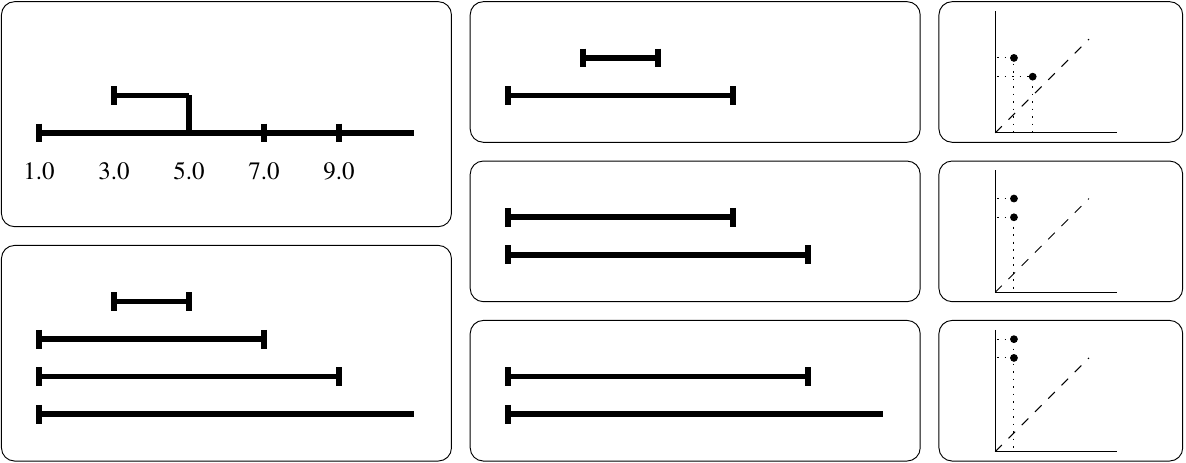_t}}
  \caption{\footnotesize \emph{Left:} the periodic merge tree of the left panel in Figure~\ref{fig:TwoTreesTwo} and below its decomposition into labeled bars. \emph{Middle:} The periodic barcode split into groups of bars per era. \emph{Right:} the corresponding ensemble of periodic persistence diagrams.}
  \label{fig:barcode}
\end{figure}

\smallskip
With this in mind, we compute the periodic barcode from the periodic merge tree one epoch at a time.
In particular, consider a beam with left endpoint $\CCC \in \MTree$, and an epoch on this beam that starts at $\AAA \in \MTree$ and ends entering $\BBB \in \MTree$.
Letting $\Frequency (\AAA) = s \nu_{d-p} R^{d-p}$ be the monomial decorating this epoch, we introduce two bars into the \emph{$(d-p)$-ary $0$-th barcode}: $[\Height (\CCC), \Height (\BBB))$ with \emph{multiplicity} $s$ and $[\Height (\CCC), \Height (\AAA))$ with \emph{multiplicity} $-s$, but note that the second bar might be empty, in which case we skip it.
The first bar tells us that $s \nu_{d-p} R^{d-p}$ (minus the monomial of the next epoch) of the shadows born at $\Height (\CCC)$ die at $\Height (\BBB)$, and the second bar tells us that the monomial of the previous epoch minus $s \nu_{d-p} R^{d-p}$ of the shadows born at $\Height (\CCC)$ die at $\Height (\AAA)$.
This straightforward conversion takes time proportional to the number of epochs and gives a collection of bars with possibly negative multiplicities.
We refer to the collection of the $d+1$ different $(d-p)$-ary $0$-th barcodes as the \emph{periodic $0$-th barcode}, or the \emph{periodic $0$-th persistence diagram}, if we choose to visualize the information with points rather than bars.
Similar to the notation for the periodic merge tree, we write $\perBCode{\filter}{\Lambda}$ for the periodic $0$-th barcode of $\filter$ and $\Lambda$.

\smallskip
Setting $\barRspace = (-\infty, \infty]$, we note that for each era, the corresponding bars with multiplicities can be viewed as a function $\Rspace \times \barRspace \to \Rspace$, which maps a point $(b,d)$ to the (possibly negative) sum of the multiplicities of all bars $[b,d)$.
This sum is $0$ if there are no such bars.
While some vectorization methods for barcodes require the multipliticites to be non-negative integers, some other standard vectorization methods, such as persistence images \cite{Ada17}, do not need such requirements and can thus be used for periodic barcodes.

\subsection{Alternating Wasserstein Distance}
\label{sec:5.2}

While the interleaving distance between periodic merge trees is difficult to find, it is straightforward to compute a meaningful distance between periodic $0$-the barcodes, as we will shortly see.
We begin by formulating the Wasserstein distance between two periodic barcodes as a solution to an optimal transport problem.
The bars in classic barcodes come with positive integer multiplicities, for which the Wasserstein distance is a solution to an optimal assignment problem; see \cite[Section~VIII.4]{EdHa10} for an algorithm.
Contrast this to the periodic setting, in which we have real and possibly negative multiplicities, which requires a solution to the more general optimal transport problem; see \cite{PeCu19} for comprehensive background.
We will work with the linear version, commonly referred to as the \emph{$1$-Wasserstein distance}, which is a weighted sum of distances between points.
The main reason is that this version satisfies $\Wasser{1}{\xi}{\eta} = \Wasser{1}{\xi + \zeta}{\eta + \zeta}$, and a similar relation does not hold if we work with powers $q > 1$ of the distances.
This property allows for the removal of negative multiplicities by adding them on both sides, which is also the intuition behind the definition of the alternating $1$-Wasserstein distance in \eqref{eqn:Wasser2}.
Recall that the \emph{support} a function $f \colon \Rspace \times \barRspace \to \Rspace$, denoted $\support{f}$, are the points in the domain for which $f$ is non-zero.
\begin{definition}[Alternating Wasserstein Distance]
  \label{dfn:alternating_Wasserstein_distance}
  Let $\xi, \eta \colon \Rspace \times \barRspace \to \Rspace$ be two non-negative multiplicity functions, each with finite support.
  Then the \emph{$1$-Wasserstein distance} between $\xi$ and $\eta$ is
  \begin{align}
    \Wasser{1}{\xi}{\eta} &= \inf\nolimits_{\Tplan, \Xplan, \Yplan}
      \left[ \sum\nolimits_{x,y} \Tplan(x,y) \Onedist{x}{y} + \sum\nolimits_x \Xplan(x) \delta(x) + \sum\nolimits_y \Yplan(y) \delta (y) \right] ,
        \label{eqn:Wasser1}
  \end{align}
  in which $\delta(x) = |x_2-x_1|$ is the vertical distance of $x = (x_1,x_2)$ to the diagonal, and the infimum is taken over all functions $\Tplan \colon \support{\xi} \times \support{\eta} \to [0,\infty)$, $\Xplan \colon \support{\xi} \to [0, \infty)$, and $\Yplan \colon \support{\eta} \to [0, \infty)$ that satisfy
  \begin{align}
    \Xplan(x) + \sum\nolimits_{y \in \support{\eta}} \Tplan(x,y) &= \xi(x) \mbox{\rm ~~~and~~~}
    \Yplan(y) + \sum\nolimits_{x \in \support{\xi}} \Tplan(x,y) = \eta(y) ,
  \end{align}
  for all $x \in \support{\xi}$ and $y \in \support{\eta}$.
  Without assuming the non-negativity of the multiplicities, we write $\xi = \xi^+ - \xi^-$ and $\eta = \eta^+ - \eta^-$, in which $\xi^+, \xi^-, \eta^+, \eta^-$ are non-negative.
  The more general \emph{alternating $1$-Wasserstein distance} between $\xi$ and $\eta$ is
  \begin{align}
    \Wasserpm{1}{\xi}{\eta} &= \Wasser{1}{\xi^+ + \eta^-}{\xi^- + \eta^+} .
      \label{eqn:Wasser2}
  \end{align}
\end{definition}
Note that the relation $\Wasser{1}{\xi}{\eta} = \Wasser{1}{\xi + \zeta}{\eta + \zeta}$ implies that $\Wasserpm{1}{\xi}{\eta}$ does not depend on the choice of $\xi^+, \xi^-, \eta^+, \eta^-$.
We may therefore assume that $\xi^+$ and $\xi^-$ have disjoint supports, and similar for $\eta^+$ and $\eta^-$.
We will use this property in the proof of the triangle inequality for the alternating $1$-Wasserstein distance.

\smallskip
Write $\Delta$ for the diagonal in $\Rspace^2$; that is: the points $x = (x_1, x_2)$ with $x_1 = x_2$.
For non-negative multiplicity functions whose support avoids $\Delta$, the $1$-Wasserstein distance is an extended metric, meaning it can also take $\infty$ as a value, namely between persistence pairs with finite and infinite deaths.
We use this property to prove that the alternating $1$-Wasserstein distance is also an extended metric.
\begin{lemma}[$W_1^{\pm}$ is Extended Metric]
  \label{lem:Wpm1_is_extended_metric}
  Let $\xi, \eta, \zeta \colon \Rspace \times \barRspace \to \Rspace$ be multiplicity functions, each with finite support avoiding $\Delta$.
  Then
  \smallskip \begin{itemize}
    \item $\Wasserpm{1}{\xi}{\eta} \geq 0$ and $\Wasserpm{1}{\xi}{\eta} > 0$ iff $\xi \neq \eta$ (positivity);
    \item $\Wasserpm{1}{\xi}{\eta} = \Wasserpm{1}{\eta}{\xi}$ (symmetry);
    \item $\Wasserpm{1}{\xi}{\eta} + \Wasserpm{1}{\eta}{\zeta} \geq \Wasserpm{1}{\xi}{\zeta}$ (triangle inequality).
  \end{itemize}
\end{lemma}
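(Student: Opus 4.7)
The plan is to reduce all three properties of $\Wasserpm{1}{\cdot}{\cdot}$ to the corresponding known properties of the underlying $1$-Wasserstein distance $\Wasser{1}{\cdot}{\cdot}$ on non-negative multiplicity functions. For the whole argument, I will exploit the translation invariance $\Wasser{1}{\alpha + \gamma}{\beta + \gamma} = \Wasser{1}{\alpha}{\beta}$ highlighted in the paragraph preceding the lemma, which allows me to add and remove a common non-negative summand on both sides of the first argument of $W_1$ without changing its value. Throughout, I will use the decompositions $\xi = \xi^+ - \xi^-$, $\eta = \eta^+ - \eta^-$, $\zeta = \zeta^+ - \zeta^-$ into non-negative parts with disjoint supports, which is permitted by the invariance remark; since the supports of $\xi, \eta, \zeta$ avoid $\Delta$, so do all sums of these parts.

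For positivity, $\Wasserpm{1}{\xi}{\eta} = \Wasser{1}{\xi^+ + \eta^-}{\xi^- + \eta^+} \geq 0$ because the right-hand side is an instance of the non-negative-multiplicity $1$-Wasserstein distance. For the equality case, $W_1$ is positive definite on pairs of non-negative functions whose support avoids $\Delta$, so $\Wasserpm{1}{\xi}{\eta} = 0$ iff $\xi^+ + \eta^- = \xi^- + \eta^+$, which rearranges to $\xi^+ - \xi^- = \eta^+ - \eta^-$, i.e.\ $\xi = \eta$. Symmetry is immediate: $\Wasserpm{1}{\xi}{\eta} = \Wasser{1}{\xi^+ + \eta^-}{\xi^- + \eta^+} = \Wasser{1}{\xi^- + \eta^+}{\xi^+ + \eta^-} = \Wasserpm{1}{\eta}{\xi}$ by symmetry of $W_1$.

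The triangle inequality is the main obstacle, but the translation invariance makes it routine once the right intermediate multiplicity function is chosen. My plan is to insert $\xi^- + \eta^+ + \zeta^-$ as the intermediate term and apply the triangle inequality for $W_1$ as
\begin{align*}
  \Wasser{1}{\xi^+ + \eta^- + \zeta^-}{\xi^- + \eta^- + \zeta^+}
  &\leq \Wasser{1}{\xi^+ + \eta^- + \zeta^-}{\xi^- + \eta^+ + \zeta^-} \\
  &\quad + \Wasser{1}{\xi^- + \eta^+ + \zeta^-}{\xi^- + \eta^- + \zeta^+}.
\end{align*}
Now I strip common summands using translation invariance: removing $\zeta^-$ from the first term gives $\Wasser{1}{\xi^+ + \eta^-}{\xi^- + \eta^+} = \Wasserpm{1}{\xi}{\eta}$; removing $\xi^-$ from the second gives $\Wasser{1}{\eta^+ + \zeta^-}{\eta^- + \zeta^+} = \Wasserpm{1}{\eta}{\zeta}$; and removing $\eta^-$ from the left-hand side of the displayed inequality gives $\Wasser{1}{\xi^+ + \zeta^-}{\xi^- + \zeta^+} = \Wasserpm{1}{\xi}{\zeta}$. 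Combining these three identifications with the above inequality yields the desired $\Wasserpm{1}{\xi}{\xi} + \Wasserpm{1}{\eta}{\zeta} \geq \Wasserpm{1}{\xi}{\zeta}$.

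The only subtlety to double-check is that, after inserting common summands, all the multiplicity functions appearing as arguments of $W_1$ are non-negative and have finite support avoiding $\Delta$, so that the triangle inequality of $W_1$ genuinely applies; this is immediate because each is a finite sum of $\xi^{\pm}, \eta^{\pm}, \zeta^{\pm}$. No further calculation is needed.
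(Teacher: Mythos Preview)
Your proof is correct and follows essentially the same route as the paper: both use translation invariance of $W_1$ to add a common non-negative summand to each side, apply the triangle inequality for $W_1$ with intermediate term $\xi^- + \eta^+ + \zeta^-$, and then strip the summands back off. Note the typo in your final displayed conclusion, where $\Wasserpm{1}{\xi}{\xi}$ should read $\Wasserpm{1}{\xi}{\eta}$.
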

\begin{proof}
  Write $\xi = \xi^+ - \xi^-$, $\eta = \eta^+ - \eta^-$, $\zeta = \zeta^+ - \zeta^-$, in which $\xi^+, \xi^-$ are non-negative with disjoint supports, and similar for $\eta^+, \eta^-$ and $\zeta^+, \zeta^-$.
  To show positivity, we note that the right-hand side of \eqref{eqn:Wasser2} can only vanish when $\xi^+ + \eta^- = \xi^- + \eta^+$.
  Since $\support{\xi^+} \cap \support{\xi^-} = \support{\eta^+} \cap \support{\eta^-} = \emptyset$, this implies $\xi^+ = \eta^+$ and $\xi^- = \eta^-$ and therefore also $\xi = \eta$.
  Symmetry is implied by the symmetry of Definition~\ref{dfn:alternating_Wasserstein_distance}.
  Finally, we use the triangle inequality for positive multiplicities to prove the same for possibly negative multiplicities:
  \begin{align}
    \Wasserpm{1}{\xi}{\eta} + \Wasserpm{1}{\eta}{\zeta}
       &= \Wasser{1}{\xi^+ + \eta^-}{\xi^- + \eta^+}
       + \Wasser{1}{\eta^+ + \zeta^-}{\eta^- + \zeta^+} \\
       &= \Wasser{1}{\xi^+ + \eta^- + \zeta^-}{\xi^- + \eta^+ + \zeta^-} \\
       &~~~~+ \Wasser{1}{\xi^- + \eta^+ + \zeta^-}{\xi^- + \eta^- + \zeta^+} \\
       &\geq \Wasser{1}{\xi^+ + \eta^- + \zeta^-}{\xi^- + \eta^- + \zeta^+} \\
       &= \Wasser{1}{\xi^+ + \zeta^-}{\xi^- + \zeta^+} ,
  \end{align}
  in which the last term is equal to $\Wasserpm{1}{\xi}{\zeta}$.
  In summary, the alternating Wasserstein distance satisfies positivity, symmetry, and the triangle inequality and is therefore an extended metric.
\end{proof}

We define the alternating Wasserstein distance of two periodic $0$-th barcodes as the sum over the alternating Wasserstein distances of the $(d-p)$-ary barcodes.

\subsection{Invariance of Periodic Barcodes}
\label{sec:5.3}

In this subsection, we prove that periodic barcodes do not depend on the lattice used in their construction.
This is in contrast to periodic merge trees, for which the splintering relation is needed to get invariant equivalence classes of such trees.
We begin by showing that splintering does not affect the periodic barcode derived from a periodic merge tree.
\begin{lemma}[Splintering Preserves Periodic Barcodes]
  \label{lem:splintering_preserves_periodic_barcodes}
  Let $\BCode$ and $\BCode'$ be the periodic $0$-th barcodes derived from the periodic merge trees $\MTree$ and $\MTree'$.
  If $\MTree'$ splinters $\MTree$, then $\BCode' = \BCode$.
\end{lemma}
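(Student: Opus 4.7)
The plan is to re-express the multiplicity of each bar as the downward jump of a cumulative shadow-monomial function, and then use the splintering condition to show these cumulative functions agree on $\MTree$ and $\MTree'$. Fix an era exponent $d-p$, a splintering witness $\omega \colon \MTree' \to \MTree$, and for each pair $(b, t) \in \Rspace^2$ let $H(b, t)$ be the sum, over all components $\CCC$ of $(K/\Lambda)_t$ whose oldest vertex has filter value exactly $b$, of the coefficient of $\nu_{d-p} R^{d-p}$ in $\Frequency(\CCC)$ (taking this coefficient to be $0$ when $\Frequency(\CCC)$ lies in a different era). Define $G(b, t)$ analogously using $(K/\Lambda')_t$ and $\Frequency'$.

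Tracing through the bar construction of Section~\ref{sec:5.1}, an epoch $[\AAA, \BBB)$ on a beam with birth at height $b$ and era-$(d-p)$ coefficient $s$ contributes $+s$ to $H(b, t)$ exactly for $t \in [\Height(\AAA), \Height(\BBB))$ and $0$ elsewhere. Hence the jump $H(b, d^-) - H(b, d)$ collects a $+s$ from each such epoch ending at $d$ and a $-s$ from each such epoch starting at $d$, which is precisely the multiplicity assigned to the bar $[b, d)$ in the $(d-p)$-ary barcode (with the convention $H(b, \infty) = 0$ handling infinite bars). Thus $\BCode(b, d) = H(b, d^-) - H(b, d)$, and the same identity holds for $\BCode'$ and $G$; it therefore suffices to verify $G(b, t) = H(b, t)$ for all $b, t$.

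Pick a component $\CCC$ of $(K/\Lambda)_t$ with oldest vertex at filter value $b$ and write $k = \card{\omega^{-1}(\CCC)}$. By Definition~\ref{dfn:equivalence_by_splintering}, the preimages $\CCC'_1, \ldots, \CCC'_k$ are components of $(K/\Lambda')_t$ whose subtrees $\MTree_{\CCC'_i}'$ satisfy $\Idist{\MTree_{\CCC'_i}'}{\MTree_{\CCC'_j}'} = 0$ and are each mapped by $\omega$ surjectively and height-preservingly onto $\MTree_\CCC$. Births (points whose subtree is a singleton) necessarily have preimages that are again births, and $\omega$ preserves heights, so the set of birth heights in each $\MTree_{\CCC'_i}'$ equals the set of birth heights in $\MTree_\CCC$; in particular the minima agree, meaning every $\CCC'_i$ also has oldest vertex at filter value $b$. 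Combined with $\Frequency'(\CCC'_i) = \Frequency(\CCC)/k$, each preimage contributes exactly $1/k$ of $\CCC$'s contribution to $H(b, t)$, so the $k$ preimages jointly reproduce it. Summing over all such $\CCC$ yields $G(b, t) = H(b, t)$, hence $\BCode' = \BCode$.

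The main obstacle is the step matching the oldest-vertex filter value of each preimage $\CCC'_i$ to that of $\CCC$: it requires reading $\Idist{\MTree_{\CCC'_i}'}{\MTree_{\CCC'_j}'} = 0$ together with the height-preserving surjection onto $\MTree_\CCC$ as forcing the birth heights in the preimage subtrees to coincide with those in $\MTree_\CCC$, an argument in the same spirit as the positivity proof of Lemma~\ref{lem:interleaving_distance_is_metric}. With this in place, the remainder is a straightforward bookkeeping of coefficients.
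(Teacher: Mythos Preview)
Your proof is correct and takes a genuinely different route from the paper's. The paper argues by refining the epochs of both trees into a common system of \emph{ages}---maximal intervals containing no critical value in their interior and such that preimages under $\omega$ of an age in $\MTree$ are ages in $\MTree'$---and then checks that each age in $\MTree$ and its $N$ preimages in $\MTree'$ produce identical bars (the $N$ copies each carrying multiplicity $s/N$). Your approach instead packages the barcode as the jump of a cumulative function $H(b,t)$ and reduces the problem to the pointwise identity $G=H$, which you derive directly from the splintering axioms at each height $t$. The paper's argument is more combinatorial and stays close to the bar construction; yours is more conceptual and avoids the explicit cancellation bookkeeping, at the cost of introducing the auxiliary functions $H,G$.

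Two small points. First, the lemma is stated for abstract splinterings, so $\MTree'$ need not arise as $\perMTree{\filter}{\Lambda'}$; your reference to ``components of $(K/\Lambda')_t$'' should simply read ``points of $\MTree'$ at height $t$''---nothing in your argument actually uses an underlying complex. Second, your justification that the oldest-vertex values agree is slightly roundabout: you do not need the statement about preimages of births being births. It suffices to observe that (a) $\omega$ maps $\MTree'_{\CCC'_i}$ onto $\MTree_\CCC$ by condition~(ii), and (b) $\omega$ maps $\MTree'_{\CCC'_i}$ \emph{into} $\MTree_\CCC$ because the unique upward path from any $\AAA'\in\MTree'_{\CCC'_i}$ to $\CCC'_i$ is sent by the continuous height-preserving $\omega$ to an upward path ending at $\CCC$. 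Together with height preservation, (a) and (b) give equality of the height-ranges and hence of the minima, which by the elder rule are the birth values.
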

\begin{proof}
  To get $\BCode$, we collect two bars from each epoch of $\MTree$, one with positive and the other with negative multiplicity, but note that the second bar may be empty.
  Let $\AAA, \BBB \in \MTree$ delimit an epoch decorated with $s \nu_{d-p} R^{d-p}$, and let $\CCC \in \MTree$ be the leftmost point on the same beam.
  By the elder rule applied in the construction of the merge tree, $\Height (\CCC)$ is the minimum height of any point in the subtree rooted at $\AAA$.
  The two bars are $[\Height (\CCC), \Height (\BBB))$ with multiplicity $s$ and $[\Height (\CCC), \Height (\AAA))$ with multiplicity $- s$.
  If we subdivide the epoch into intervals we call \emph{ages} and construct the bars from each age, we get the same result because all bars ending at points strictly between $\AAA$ and $\BBB$ cancel in pairs.
  This motivates us to refine the epochs of $\MTree$ and $\MTree'$ into ages such that
  \smallskip \begin{enumerate}[(i)]
    \item none of the ages contains the value of critical event in its interior;
    \item the preimages of an age in $\MTree$ are ages in $\MTree'$.
  \end{enumerate} \smallskip
  Consider an age with shadow monomial $s \nu_{d-p} R^{d-p}$ in $\MTree$, let $N$ be the number of its preimages in $\MTree'$, and observe that each of these preimages is an age with shadow monomial $\frac{s}{N} \nu_{d-p} R^{d-p}$.
  The bars generated by the age in $\MTree$ and its preimages in $\MTree$ are the same, except that the former have multiplicities $\pm s$, while the latter have multiplicities $\pm \frac{s}{N}$.
  Adding the contributions of the $N$ preimages amounts to the same two bars with the same multiplicities.
  Since this is true for all ages, we conclude that $\BCode = \BCode'$.
\end{proof}

The invariance of periodic barcodes is an immediate consequence of Lemma~\ref{lem:splintering_preserves_periodic_barcodes} and Lemma~\ref{lem:splintering_from_sublattice}.
Indeed, if $\Lambda', \Lambda'' \subseteq \Lambda$ are two $d$-dimensional sublattices, and 
$\perMTree{\filter}{\Lambda'}$, $\perMTree{\filter}{\Lambda''}$ 
are the corresponding periodic merge trees, then they both splinter 
$\perMTree{\filter}{\Lambda}$
by Lemma~\ref{lem:splintering_from_sublattice}.
By Lemma~\ref{lem:splintering_preserves_periodic_barcodes}, the periodic barcodes derived from these periodic merge trees satisfy $\perBCode{\filter}{\Lambda'} = \perBCode{\filter}{\Lambda} = \perBCode{\filter}{\Lambda''}$.
We formulate for later reference:
\begin{corollary}[Invariance of Periodic Barcodes]
  \label{cor:invariance_of_periodic_barcodes}
  Let $\Lambda \subseteq \Rspace^d$ be a $d$-dimensional lattice, $\filter \colon K \to \Rspace$ a $\Lambda$-periodic filter, and $\Lambda', \Lambda'' \subseteq \Lambda$ two $d$-dimensional sublattices.
  Then $\perBCode{\filter}{\Lambda'} = \perBCode{\filter}{\Lambda''}$.
\end{corollary}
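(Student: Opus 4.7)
The plan is to chain Lemma~\ref{lem:splintering_from_sublattice} with Lemma~\ref{lem:splintering_preserves_periodic_barcodes}. First I would check that the statement even makes sense: since $\Lambda' \subseteq \Lambda$ and $\Lambda'' \subseteq \Lambda$, every vector of $\Lambda'$ (resp.\ $\Lambda''$) lies in $\Lambda$, so the $\Lambda$-periodicity of $\filter$ immediately passes down to $\Lambda'$- and $\Lambda''$-periodicity. Hence the periodic merge trees $\perMTree{\filter}{\Lambda'}$ and $\perMTree{\filter}{\Lambda''}$, and the barcodes derived from them, are well-defined objects.

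Next I would apply Lemma~\ref{lem:splintering_from_sublattice} twice, once with the sublattice pair $(\Lambda', \Lambda)$ and once with $(\Lambda'', \Lambda)$, to deduce that $\perMTree{\filter}{\Lambda'}$ splinters $\perMTree{\filter}{\Lambda}$ and that $\perMTree{\filter}{\Lambda''}$ splinters $\perMTree{\filter}{\Lambda}$. By Lemma~\ref{lem:splintering_preserves_periodic_barcodes}, splintering preserves the periodic $0$-th barcode, so $\perBCode{\filter}{\Lambda'} = \perBCode{\filter}{\Lambda}$ and $\perBCode{\filter}{\Lambda''} = \perBCode{\filter}{\Lambda}$. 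Chaining these two equalities yields $\perBCode{\filter}{\Lambda'} = \perBCode{\filter}{\Lambda''}$, which is the claim.

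I expect no real obstacle in this argument; the corollary is a direct composition of the two lemmas that were developed precisely for this purpose. It is however worth contrasting the proof with its analog for merge trees in Theorem~\ref{thm:invariance_of_periodic_merge_trees}, where a common refinement via $\Lambda' \cap \Lambda''$ was needed to exhibit a single tree that splinters both. At the barcode level that detour is unnecessary, because Lemma~\ref{lem:splintering_preserves_periodic_barcodes} lets us collapse a splintering relation in either direction to an equality of barcodes, so the coarser tree $\perMTree{\filter}{\Lambda}$ already serves as the common witness without the need to go to a finer lattice.
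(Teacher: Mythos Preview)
Your proposal is correct and matches the paper's own argument essentially verbatim: the paper also applies Lemma~\ref{lem:splintering_from_sublattice} to see that both $\perMTree{\filter}{\Lambda'}$ and $\perMTree{\filter}{\Lambda''}$ splinter $\perMTree{\filter}{\Lambda}$, and then invokes Lemma~\ref{lem:splintering_preserves_periodic_barcodes} to obtain $\perBCode{\filter}{\Lambda'} = \perBCode{\filter}{\Lambda} = \perBCode{\filter}{\Lambda''}$. Your closing remark contrasting this with the common-refinement argument needed for Theorem~\ref{thm:invariance_of_periodic_merge_trees} is apt and goes slightly beyond what the paper spells out.
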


\subsection{Size of Multiplicities}
\label{sec:5.4}

To prepare the proof of stability for periodic barcodes, this section studies how large the multiplicities can get.
We thus consider the maximum multipicity of a bar in the $0$-barcode of $\filter$ and $\Lambda$,
\begin{align}
  \multiplicity (\filter, \Lambda)
  &= \max \left\{ \frac{\volume{p}{\Lambda_\CCC}}{\volume{d}{\Lambda}} \mid \CCC \in \perMTree{\filter}{\Lambda} \right\} ,
\end{align}
and prove an upper bound that depends on the dimension, $d$, the number of edges in the quotient complex, $m$, the magnitude of the shift vectors, $D$, but also on the basis of $\Lambda$.
As before, we write $U$ for the matrix whose columns are the vectors in the basis of $\Lambda$.
Recall that $U^{-1}$ maps $\Lambda$ to $\Zspace^d$, and that the \emph{operator norm}, $\normop{U^{-1}}{} = \max\nolimits_{x \in \Rspace^d} {\normtwo{U^{-1}(x)}}/{\normtwo{x}}$, is the largest singular value of $U^{-1}$ and quantifies the maximum stretching experienced by any vector $x \in \Rspace^d$.
\begin{lemma}[Maximum Multiplicity]
\label{lem:maximum_multiplicity}
  Let $\Lambda \subseteq \Rspace^d$ be a $d$-dimensional lattice, $U$ the matrix of basis vectors, $\filter \colon K \to \Rspace$ a $\Lambda$-periodic filter on a $\Lambda$-periodic complex, $m$ the number of edges, and $D$ the maximum magnitude of their shift vectors in $K / \Lambda$.
  Then $\multiplicity (\filter, \Lambda) \leq (d^{2.5}Dm \normop{U^{-1}}{} )^d$.
\end{lemma}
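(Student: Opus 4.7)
The plan is to combine the bound on the integer matrix $V$ from Lemma~\ref{lem:magnitude_of_basis} with Horn's multiplicative inequality for singular values, which cleanly separates the contribution of $U$ into factors that only involve $\normop{U^{-1}}{}$.

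First I would invoke Lemma~\ref{lem:magnitude_of_basis} to obtain a basis of $\Lambda_\CCC$ as the columns of $UV$, where $V$ is a $d \times p$ integer matrix in Hermite normal form satisfying $\norm{V}_\infty \leq (\sqrt{d}\,Dm)^d$. Writing $\tau_1 \geq \ldots \geq \tau_p$ for the singular values of $UV$ and $\sigma_1 \geq \ldots \geq \sigma_d$ for those of $U$, we have $\volume{p}{\Lambda_\CCC} = \prod_{i=1}^p \tau_i$ and $\volume{d}{\Lambda} = |\det U| = \prod_{i=1}^d \sigma_i$, so the task reduces to comparing these two products.

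Next, Horn's multiplicative inequality yields $\tau_i(UV) \leq \sigma_i(U)\,\normop{V}{}$ for every $i \leq p$, so $\volume{p}{\Lambda_\CCC} \leq \normop{V}{}^p \prod_{i=1}^p \sigma_i$. Taking the ratio with $\volume{d}{\Lambda}$ and cancelling $\sigma_1,\ldots,\sigma_p$ gives
\[
\frac{\volume{p}{\Lambda_\CCC}}{\volume{d}{\Lambda}} \;\leq\; \frac{\normop{V}{}^p}{\prod_{i=p+1}^d \sigma_i} \;\leq\; \normop{V}{}^p \cdot \normop{U^{-1}}{}^{d-p},
\]
where the final inequality uses $\sigma_i \geq \sigma_{\min}(U) = 1/\normop{U^{-1}}{}$. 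This is the crucial step: it entirely eliminates $\normop{U}{}$ from the estimate, replacing it with a controlled power of $\normop{U^{-1}}{}$.

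Finally, I would bound $\normop{V}{} \leq \sqrt{dp}\,\norm{V}_\infty \leq \sqrt{dp}\,(\sqrt{d}\,Dm)^d$ via the Frobenius bound on the spectral norm, substitute Lemma~\ref{lem:magnitude_of_basis}, and collect the various $\sqrt{d}$ factors into the compact prefactor $d^{2.5d}$ to recover $(d^{2.5}Dm\normop{U^{-1}}{})^d$. The main obstacle I anticipate is reconciling the exponent of $\normop{U^{-1}}{}$: the Horn step naturally produces $\normop{U^{-1}}{}^{d-p}$, whereas the statement has $\normop{U^{-1}}{}^d$; closing this gap requires either the regime $\normop{U^{-1}}{} \leq 1$ (shortest lattice vector at least $1$, which can be arranged by rescaling) or absorbing the discrepancy into the $d$-dependent prefactor. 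Tracking the $\sqrt{d}$, $\sqrt{dp}$, and $(\sqrt{d}Dm)^d$ contributions precisely is the routine but somewhat tedious bookkeeping that yields the $d^{2.5}$ exponent in the final expression.
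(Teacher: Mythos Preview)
Your singular-value decomposition via Horn's inequality is structurally sound and parallels what the paper does by extending $w_1,\ldots,w_p$ with orthonormal vectors and applying Hadamard's inequality to $U^{-1}$; both routes produce a bound of the form (something)$^p \cdot \normop{U^{-1}}{d-p}$, and you correctly identify the residual issue with the exponent on $\normop{U^{-1}}{}$.

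The genuine gap is the magnitude bound on $V$. Invoking Lemma~\ref{lem:magnitude_of_basis} gives $\norm{V}_\infty \leq (\sqrt{d}\,Dm)^d$, already a $d$-th power in $Dm$. Feeding this into $\normop{V}{}^p$ yields a factor of order $(Dm)^{dp}$, not $(Dm)^d$. For $p \geq 2$ this overshoots the target $(d^{2.5}Dm\normop{U^{-1}}{})^d$ by a factor $(Dm)^{d(p-1)}$, which is unbounded in $m$ and $D$ and cannot be absorbed into any constant depending only on $d$. So the ``routine bookkeeping'' you anticipate at the end does not close.

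The paper addresses exactly this point: it explicitly \emph{avoids} the Hermite normal form and instead applies the reduction of Klein--Reuter~\cite{KlRe23} to the matrix of drift vectors, obtaining columns $v_i$ with $\norm{v_i}_\infty \leq d^2 Dm$ and hence $\normtwo{v_i} \leq d^{2.5}Dm$, linear in $Dm$. With that bound in hand, $\prod_{i=1}^p \normtwo{v_i} \leq (d^{2.5}Dm)^p$ and the rest goes through. Your Horn-inequality framework would work equally well once you swap in this sharper basis; the missing ingredient is that tighter reduction, not the linear-algebra step.
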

\begin{proof}
  Write 
  $\MTree = \perMTree{\filter}{\Lambda}$,
  let $\CCC \in \MTree$ be arbitrary, set $p = \dime{\Lambda_\Gamma}$, and let $w_1, w_2, \ldots, w_p$ be a basis of $\Lambda_\CCC$.
  Write $v_i = U^{-1} (w_i)$.
  There are different ways to choose the basis.
  For example, the columns of the reduced matrix in Hermite normal form would be a valid choice.
  However, to get a better bound, we choose $v_i$ as the output of a different reduction algorithm, described in \cite{KlRe23}, which we apply to the matrix of size $d \times m$ whose columns are the drift vectors of the simple cycles that correspond to the fewer than $m$ catenation events in the subtree rooted at $\CCC$.
  As argued in the proof of Lemma~\ref{lem:magnitude_of_basis},
  the magnitude of this matrix is at most $Dm$, and according to \cite{KlRe23}, the magnitude of the reduced matrix---which is not necessarily in Hermite normal form---is at most $d^2 Dm$.
  It follows that the Euclidean norm of each column vector satisfies $\normtwo{v_i} \leq \sqrt{d} (d^2 Dm) = d^{2.5}Dm$.
  In order to write $\volume{p}{\Lambda_\CCC}$ as a $d$-dimensional volume, let $w_{p+1}, w_{p+2}, \ldots, w_{d}$ be unit vectors orthogonal to each other and to $\Lambda_\CCC$, so that $\volume{p}{\Lambda_\CCC} = \volume{d}{\Lambda_\CCC'}$, with $\Lambda_\CCC'$ spanned by $w_1, w_2, \ldots, w_d$.
  Since a linear transformation changes the $d$-dimensional volume of any subset of $\Rspace^d$ in the same way, we get
  \begin{align}
    \frac{\volume{d}{\Lambda_\CCC'}}{\volume{d}{\Lambda}} 
      &= \frac{\volume{d}{U^{-1}(\Lambda_\CCC'})}{\volume{d}{U^{-1}(\Lambda})} 
       = \volume{d}{U^{-1}(\Lambda_\CCC'})
      \leq \prod_{i=1}^d\normtwo{v_i} 
        \label{eqn:multiplicity1} \\
      &\leq \left( d^{2.5}Dm \right)^p \cdot \normop{U^{-1}}{d-p} \leq \left( d^{2.5}Dm \normop{U^{-1}}{}\right)^d ,
        \label{eqn:multiplicity2}
  \end{align}
  in which we get the third term in \eqref{eqn:multiplicity1} by recalling that $U^{-1} (\Lambda) = \Zspace^d$, and we get the first term in \eqref{eqn:multiplicity2} by using Lemma~\ref{lem:magnitude_of_basis} for the first $p$ and the operator norm for the last $d-p$ basis vectors.
\end{proof}
\noindent \emph{Remark about dependence on basis.}
Both $D$ and $\normop{U^{-1}}{}$ depend on the basis we choose for $\Lambda$, but by Corollary~\ref{cor:invariance_of_periodic_barcodes}, the barcode and therefore the multiplicities of its bars do not.
To gain a better understanding of the size of the multiplicities, it would therefore we useful to know how small $D$ and $\normop{U^{-1}}{}$ can be made by careful choice of a basis.
Indeed, the bound in Lemma~\ref{lem:maximum_multiplicity} seems excessively pessimistic.
It would be interesting to find out what the maximum multiplicity is in practice, e.g.\ on average for the periodic structures collected in popular crystallographic databases.

\subsection{Stability of Periodic Barcodes}
\label{sec:5.5}

The \emph{cellular $\ell_1$-distance} between filters $\filter/\Lambda, \filteraux/\Lambda \colon K / \Lambda \to \Rspace$ is the sum of absolute differences over all simplices, $\Onedist{ \filter/\Lambda }{ \filteraux/\Lambda } = \sum_{\sigma \in K/\Lambda} \abs{\filter/\Lambda(\sigma) - \filteraux/\Lambda (\sigma)}$, but note that this is different from the $L_1$-distance between the two functions, which is be the integral of their point-wise differences.
By comparing the $1$-Wasserstein distance of two non-periodic barcodes with the cellular $\ell_1$-distance between the corresponding filters, Skraba and Turner~\cite{SkTu20} prove stability with Lipschitz constant $1$.
Following this approach, we prove stability with respect to the cellular $\ell_1$-distance in the periodic setting, but with worse Lipschitz constant.
Observe first that the periodic $0$-th barcode is invariant under changing how ties in the ordering of the simplices are broken.
Indeed, the periodic merge tree is invariant, and the asymmetry introduced by the elder rule has no effect on the periodic barcode computed from the periodic merge tree.
This invariance is of course necessary for the stability of periodic barcodes, which we prove next.
\begin{theorem}[Stability of Periodic Barcodes]
  \label{thm:stability_of_periodic_barcodes}
  Let $\Lambda \subseteq \Rspace^d$ be a $d$-dimensional lattice, $U$ the matrix of a basis of $\Lambda$, and $\filter, \filteraux \colon K \to \Rspace$ two $\Lambda$-periodic filters.
  Then
  \begin{align}
    W_1^{\pm} (\perBCode{\filter}{\Lambda}, \perBCode{\filteraux}{\Lambda})
       &\leq 2 (d+1) \multiplicity_0 \cdot \Onedist{ \filter/ \Lambda }{ \filteraux / \Lambda } ,
  \end{align}
  in which $\multiplicity_0 = (d^{2.5}Dm \normop{U^{-1}}{})^d$ is the upper bound on the maximum multiplicity from Lemma~\ref{lem:maximum_multiplicity}, with $m$ the number of edges of $K / \Lambda$, and $D$ the maximum magnitude of any of their shift vectors.
\end{theorem}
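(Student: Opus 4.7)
The plan is to adapt the straight-line interpolation technique of Skraba and Turner~\cite{SkTu20} to the periodic, multi-era setting with multiplicities. First I would set $\filter_t = (1-t)\filter + t\filteraux$ for $t \in [0,1]$; each $\filter_t$ is $\Lambda$-periodic. The combinatorial structure of the periodic merge tree---which simplex produces which event, and the layout of beams, eras, and epochs---depends only on the relative order of the simplex values, and this order changes at finitely many critical $t$-values. Between consecutive critical values, the bars of $\perBCode{\filter_t}{\Lambda}$ keep their combinatorial identity, and every bar endpoint is either $+\infty$ or varies linearly as $\filter_t(\sigma)$ for some $\sigma \in K/\Lambda$. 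Since the periodic barcode is invariant under tie-breaking (noted immediately before the theorem), $\perBCode{\filter_t}{\Lambda}$ is continuous in $t$, so by the triangle inequality for $\Wasserpm{1}{\cdot}{\cdot}$ (Lemma~\ref{lem:Wpm1_is_extended_metric}) it suffices to bound the distance between the barcodes at the two ends of a single generic subinterval.

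On such a subinterval, the transport plan that sends each bar to its combinatorial counterpart, combined with the identity $\Wasser{1}{\xi+\zeta}{\eta+\zeta} = \Wasser{1}{\xi}{\eta}$ from Definition~\ref{dfn:alternating_Wasserstein_distance}, bounds $\Wasserpm{1}{\cdot}{\cdot}$ by the sum, over all bars, of $\abs{\text{mult}}$ times the $\ell_1$-displacement of the bar's two endpoints in $\Rspace \times \barRspace$. Grouping contributions by simplex, this sum becomes $\sum_\sigma \abs{\filteraux(\sigma) - \filter(\sigma)} \cdot M(\sigma)$, where $M(\sigma)$ denotes the total absolute multiplicity of all bars having $\filter_t(\sigma)$ as an endpoint. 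The heart of the argument is to show that $M(\sigma) \leq 2(d+1)\multiplicity_0$. The dominant case is when $\sigma$ is a vertex starting a beam $\CCC$, so that $\Height(\CCC) = \filter_t(\sigma)$ is the starting endpoint of \emph{every} bar on $\CCC$. Within any single era, however, the positive-multiplicity bar produced by epoch $j$ and the negative-multiplicity bar produced by the following epoch $j{+}1$ have identical endpoint pairs and opposite signs, so they combine. After combining pairwise, the surviving bars' absolute multiplicities telescope to at most $2s_{\max} \leq 2\multiplicity_0$ per era, where $s_{\max}$ is the largest epoch multiplicity in that era, which is bounded by Lemma~\ref{lem:maximum_multiplicity}. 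Summing across the at most $d+1$ possible eras yields $M(\sigma) \leq 2(d+1)\multiplicity_0$. A separate, easier case analysis covers edges (catenations and mergers), giving the tighter bound $M(\sigma) = O(\multiplicity_0)$ because an edge value is the endpoint of at most one combined bar per era, in at most a constant number of eras.

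Integrating the per-simplex contribution over $t \in [0,1]$ yields
\begin{align}
\Wasserpm{1}{\perBCode{\filter}{\Lambda}}{\perBCode{\filteraux}{\Lambda}}
 &\leq 2(d+1)\multiplicity_0 \sum_{\sigma \in K/\Lambda} \abs{\filter(\sigma) - \filteraux(\sigma)}
  = 2(d+1)\multiplicity_0 \cdot \Onedist{\filter/\Lambda}{\filteraux/\Lambda},
\end{align}
as claimed. The main obstacle will be the per-era telescoping in the vertex case: the bars produced by consecutive epochs on the same beam must be combined carefully, and without this pairwise cancellation $M(\sigma)$ could grow with the number of epochs rather than being bounded by $2\multiplicity_0$ per era. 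A secondary subtlety is the continuity of $\perBCode{\filter_t}{\Lambda}$ across critical $t$-values, which I would handle by passing to a limit from the two sides and invoking the tie-breaking invariance noted above.
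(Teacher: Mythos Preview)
Your proposal is correct and follows essentially the same route as the paper: the straight-line homotopy $\filter_t = (1-t)\filter + t\filteraux$, the epoch-by-epoch bijection as transport plan on each subinterval where the simplex order is fixed, the per-era telescoping bound $2\multiplicity_0$ for vertex-birth contributions, and a constant number of bars (the paper makes it three) for edge-death contributions. The paper's presentation separates the ``common compatible order'' case (your single subinterval) from the homotopy reduction, but the substance is identical.
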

\begin{proof}
  The proof is analogous to that of \cite[Lemma 4.7 and Theorem 4.8]{SkTu20}, which itself follows the strategy of the proof of \cite[Combinatorial Stability Theorem]{CEM06}. 
  It consists of two steps: the first assumes that $\filter$ and $\filteraux$ are compatible to a common ordering of the simplices, and the second analyzes the transpositions necessary to make them compatible to such an ordering, if they are not.

  \smallskip \noindent \emph{Step 1.}
  Assume the filters $\filter$ and $\filteraux$ allow for a total order of the simplices that is compatible to both; that is: $\sigma_i \subseteq \sigma_j$, $\filter(\sigma_i) < \filter(\sigma_j)$, and $\filteraux(\sigma_i) < \filteraux(\sigma_j)$ all imply $i < j$.
  Let ${\rm index} \colon K/\Lambda \to \Rspace$ be the filter that assigns to each simplex its index in this order, and construct 
  $\MTreeIndex = \perMTree{{\rm index}}{\Lambda}$.
  Replacing the height $\Height (\CCC) = i$ of a critical event by $\Height (\CCC) = \filter (\sigma_i)$ and $\Height (\CCC) = \filteraux (\sigma_i)$, respectively, we obtain two periodic merge trees, $\MTree_\filter$ and $\MTree_\filteraux$, whose only difference is the height function.
  However, since $\filter (\sigma_i) = \filter (\sigma_j)$ is possible even if $i \neq j$, some epochs in $\MTreeIndex$ may correspond to \emph{empty epochs} in $\MTree_\filter$, and similarly for $\MTree_\filteraux$.
  When we construct the periodic barcodes, $\BCode_\filter = \perBCode{\filter}{\Lambda}$ and $\BCode_\filteraux = \perBCode{\filteraux}{\Lambda}$, we use the empty epochs as well, while noting that each contributes two bars that cancel each other.
  The bijection between the epochs of $\MTree_\filter$ and $\MTree_\filteraux$ (both empty and non-empty) thus gives a bijection between $\BCode_\filter$ and $\BCode_\filteraux$, which is a transportation plan, $\Tplan \colon \support{\xi} \times \support{\eta} \to \Rspace$, as in Definition~\ref{dfn:alternating_Wasserstein_distance}.
    \footnote{Whenever the transportation plan maps to or from a specific point on the diagonal, we could instead use $\Xplan$ or $\Yplan$ to or from the closest point on the diagonal.
    For the sake of a simpler argument, we refrain from using $\Xplan$ and $\Yplan$ and rely solely on $\Tplan$, which leads to a possible over-estimation of the $1$-Wasserstein distance.}
    \footnote{Note also that instead of canceling the two bars contributed by an empty epoch, we keep both.
    This corresponds to looking at $\Wasser{1}{\xi+\zeta}{\eta + \zeta}$ instead of $\Wasser{1}{\xi}{\eta}$, but recall that they agree.}
  We prove that the cost of this particular transportation plan is bounded from above by $2 (d+1) \multiplicity_0 \cdot \Onedist{\filter/\Lambda}{\filteraux/\Lambda}$, which will imply
  \begin{align}
    W_1^{\pm} (\BCode_\filter, \BCode_\filteraux)
    &\leq  2 (d+1) \multiplicity_0 \cdot \Onedist{\filter/\Lambda}{\filteraux/\Lambda} .
      \label{eqn:transportationcost}
  \end{align}
  By construction, we compare two bars if they are born and die at the hands of the same two simplices.
  Their contribution to $\Tplan$ is therefore the difference in birth values plus the difference in death values, times the absolute multiplicity.
  Depending on the type of critical event a simplex causes, its value may mark more or fewer births or deaths.
  If $\sigma$ causes a catenation or merge event, its value marks the death of at most three bars, with the absolute multiplicity of each bounded by $\multiplicity_0$.
  So the total cost for transporting the corresponding death values from $\filter$ to $\filteraux$ is bounded by $3 \multiplicity_0 |\filter (\sigma) - \filteraux (\sigma)|$.
  If $\sigma$ causes an appearance, its value may mark the births of any number of bars.
  However, within each era, absolute multiplicities form a telescoping series whose sum is bounded by $2 \multiplicity_0$.
  Since there are $d+1$ eras, the cost for transporting the corresponding birth values from $\filter$ to $\filteraux$ is bounded by $2 (d+1) \multiplicity_0 \cdot |\filter (\sigma) - \filteraux (\sigma)|$.
  Adding the cost for the death and the birth values implies \eqref{eqn:transportationcost}.

  \smallskip \noindent \emph{Step 2.}
  If $\filter$ and $\filteraux$ are not necessarily compatible with the same total order, we look at the straight-line homotopy between the two filters: $\filter_t = (1-t) \filter + t \filteraux$ for $t \in [0,1]$ and observe that
  \smallskip \begin{itemize}
    \item $\filter_0=\filter$ and $\filter_1 = \filteraux$;
    \item each $\filter_t$ is a periodic filter with respect to $\Lambda$;
    \item for each $0 \leq s \leq t \leq 1$, $\Onedist{\filter_{s}/ \Lambda}{\filter_{t}/ \Lambda} = (t-s) \Onedist{ \filter/ \Lambda }{ \filteraux / \Lambda }$;
    \item there are only finitely many values $0=t_0 < t_1 < \ldots < t_k = 1$ at which the total order compatible with $F_t$ changes.
  \end{itemize} \smallskip
  With this, we finish the proof by taking the sum of the bounds in \eqref{eqn:transportationcost} for each interval between two such values:
  \begin{align}
    W_1^{\pm} (\BCode_\filter, \BCode_\filteraux)
      &\leq \sum\nolimits_1^k W_1^{\pm} (\perBCode{\filter_{t_{i-1}}}{\Lambda}, \perBCode{\filter_{t_{i}}}{\Lambda}) \\
      &\leq 2 (d+1) \multiplicity_0 \sum\nolimits_1^k \Onedist{\filter_{t_{i-1}}/ \Lambda}{\filter_{t_{i}}/ \Lambda} \\
      &= 2 (d+1) \multiplicity_0  \cdot \Onedist{ \filter/ \Lambda }{ \filteraux / \Lambda } ,
  \end{align}
  which is the claimed inequality.
\end{proof}

\noindent \emph{Remark about invariance.}
When a perturbation breaks a symmetry and thus forces a coarser lattice, like in Figure~\ref{fig:PerturbedTree}, Theorem~\ref{thm:stability_of_periodic_barcodes} can be applied to the filter before and after the perturbation, both considered under the coarse lattice.
By Corollary~\ref{cor:invariance_of_periodic_barcodes}, passing to an unnecessarily coarse lattice does not change the periodic barcode.

\smallskip \noindent \emph{Remark about distortion.}
For a $\Lambda$-periodic $\filter \colon K \to \Rspace$, and an arbitrarily small $\ee > 0$, a uniform scaling of $K$ and $\Lambda$ by $1+\varepsilon$ that preserves the filter values yields a periodic merge tree with slightly smaller shadow monomials.
Hence, there is no arbitrarily small interleaving with the original periodic merge tree.
However, the $1$-Wasserstein distance between the corresponding periodic barcodes is less sensitive to small differences in the multiplicities and therefore the better choice in coping with the effect of slight scalings, or more general affine linear transformations with singular values close to $1$.

\smallskip \noindent \emph{Remark about Delaunay triangulations.}
Note that the perturbation of the vertices of a $2$-dimensional Delaunay triangulation may change the filter values but possibly also the underlying graph, which happens only when four points lie on a common circle so one diagonal of a convex quadrangle \emph{flips} to the other diagonal.
In this case, neither diagonal contributes to the merge trees, because their endpoints are connected via shorter edges of the quadrangle.
Hence, a diagonal flip does not affect the periodic merge tree. 
Since Theorems~\ref{thm:stability_of_periodic_merge_trees} and \ref{thm:stability_of_periodic_barcodes} apply to each part of the straight line homotopy between Delaunay triangulations during which the graph structure does not change---and there is no contribution due to the diagonal flips---the two theorems can be used to bound the distance between our descriptors of two Delaunay triangulations with possibly non-isomorphic edge-skeletons.

\section{A Working Example}
\label{sec:6}

This section illustrates the concepts introduced in Sections~\ref{sec:3} to \ref{sec:5} using the periodic graph in Figure~\ref{fig:3D-graph} as a running example.
Its quotient in the $3$-dimensional torus has five vertices, labeled from $1$ to $5$, and eight edges, each labeled with its filter value; see Figure~\ref{fig:3D-quotient}.
If an edge has shadows whose endpoints lie in different copies of the unit cell, we choose an arbitrary direction and label the directed edge with the shift vector that locates the unit cell of the target vertex relative to that of the source vertex.
In Figure~\ref{fig:3D-quotient}, two of the blue edges connect to the same vertex twice, and they correspond to two families of parallel lines in $\Rspace^3$: with diagonal direction defined by the drift vector $(1,1,0)$, and with vertical direction defined by the drift vector $(0,0,1)$.
\begin{figure}[hbt]
  \centering \vspace{-0.0in}
  \resizebox{!}{1.8in}{\input{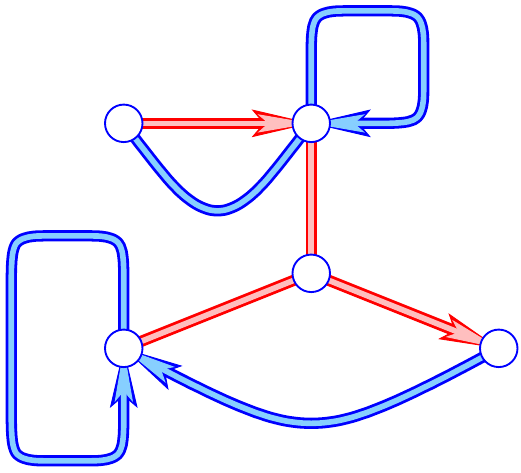_t}}
  \vspace{-0.0in}
  \caption{\footnotesize The quotient of the graph in Figure~\ref{fig:3D-graph}.
  Each edge is labeled by its value in the filter, and some also by their shift vectors.
  The spanning tree edges, which merge components, are drawn straight and \emph{red}, and the others are drawn curved and \emph{blue}.
  Each edge whose shadow has endpoints in different copies of the unit cell has a non-zero shift vector associated to a directed version of the edge.}
  \label{fig:3D-quotient}
\end{figure}
The loop formed by the two edges connecting vertices $1$ and $2$ corresponds to another family of lines with direction defined by $(1,0,0)$.
More interesting is the loop formed by the three edges connecting vertices $3$, $4$, and $5$.
The drift vector is $(2,0,0)$, which implies that two copies of the strand (drawn curved in Figure~\ref{fig:3D-graph} for better visibility) form a double helix that passes through a linear sequence of unit cells.

\smallskip
Next we go through the motion of constructing the periodic merge tree incrementally, by adding one vertex or edge at a time in the order of increasing filter value.
The top panel of Figure~\ref{fig:3D-barcode} shows the final tree.
We pay special attention to the periodicity lattices, which are maintained incrementally and used to compute the shadow monomials decorating the beams of the tree.
For simplicity, we assume that the underlying lattice is $\Lambda = \Zspace^3$, whose unit cell is the unit cube with unit volume.
\begin{figure}[t]
  \centering \vspace{-0.1in}
  \resizebox{!}{3.3in}{\input{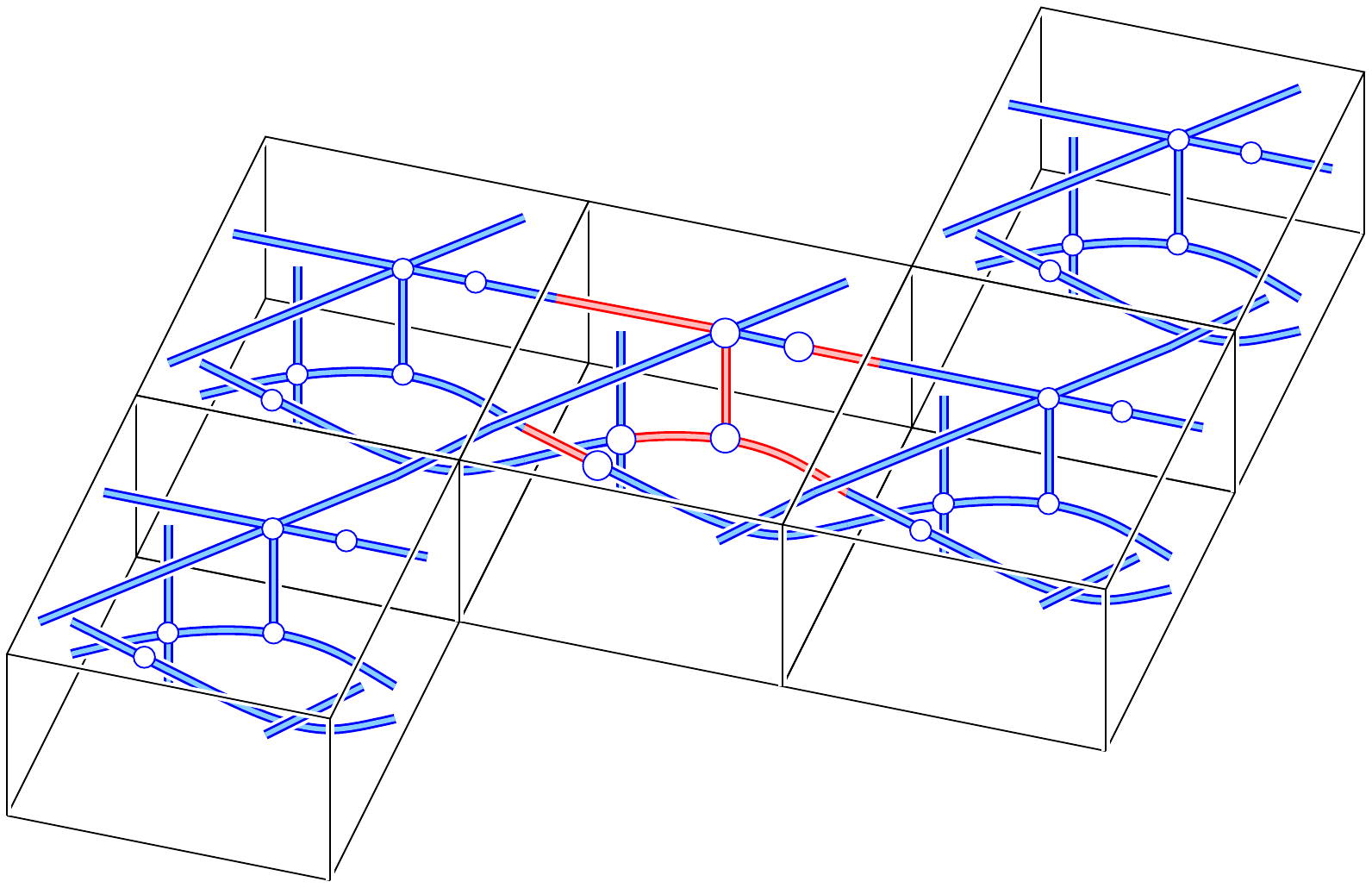_t}}
  \vspace{-0.1in}
  \caption{\footnotesize A periodic graph in $\Rspace^3$ serving as a running example to illustrate the periodic merge tree and its construction.
  The portion of the graph inside five copies of the unit cell is shown.
  Inside the middle unit cell, the vertices are shown with labels and the edges in the spanning tree are drawn \emph{red}.
  The motif within this cell consists of a cross above a double helix, with a single edge connecting the two, and, in addition, a vertical line sharing a point with the helix. 
  This motif repeats periodically.
  The two lines of the cross meet at vertex $2$, and together with their periodic translates form a series of $2$-dimensional grids.
  The two strands of the double helix are periodic translates of each other and pass from left to right through copies of vertices $3$, $4$, $5$, in this cyclic sequence.
  For convenience, the vertex labels are also used as filter values, and the values of the edges are shown in Figure~\ref{fig:3D-quotient}, which displays the corresponding quotient, i.e.\ the graph in the $3$-dimensional torus.}
  \label{fig:3D-graph}
\end{figure}
\medskip \begin{description}
  \item[{\sc Steps 1 to 5:}] adding the five vertices, we get five connected components, each with trivial periodicity lattice, $\Lambda_1 = \Lambda_2 = \Lambda_3 = \Lambda_4 = \Lambda_5 = \{0\}$, in which we use the labels of the vertices as indices.
  Each of these lattices has $0$-dimensional volume $1$, which implies that the shadow monomial is $\frac{4 \pi}{3} R^3$; see the left ends of the beams in the top panel of Figure~\ref{fig:3D-barcode}.
  \item[{\sc Step 6:}] the edge with filter value $6.0$ connects vertex $2$ to itself, and its addition changes the periodicity lattice of the corresponding component to $\Lambda_2 = \Lambda((1,1,0))$.
  Its unit cell has $1$-dimensional volume $\sqrt{2}$.
  Note that the dimension of the periodicity lattice increases from $0$ to $1$, and the shadow monomial decreases from cubic to quadratic.
  The drop of the exponent means the component leaves the cubic and enters the quadratic era.
  \item[{\sc Steps 7 to 9:}] the edges with filter values $7.0, 8.0, 9.0$ form the helix in the periodic graph.
  The addition of the first edge merges the components of vertices $3$ and $4$, and the addition of the second edge merges this component with that of vertex $5$.
  These mergers do not affect the periodicity lattices.
  However, the third edge completes the helix with periodicity lattice $\Lambda_3 = \Lambda( (2,0,0) )$ and shadow monomial $2 \pi R^2$.
  The factor $2$ indicates that shifting a strand by $(1,0,0)$ produces a second strand, so we are dealing with a double and not a single helix.
  \item[{\sc Step 10:}] the edge with filter value $10.0$ connects the line from Step~6 with the helix from Step~9.
  The corresponding periodicity lattice is spanned by the vectors $(1,1,0)$ and $(2,0,0)$, which after reduction becomes $\Lambda_2 = \Lambda((1,1,0),(0,2,0))$.
  The $2$-dimensional volume of its unit cell is $2$, and so is the $1$-dimensional volume of the unit ball in $\Rspace^1$, which is why the shadow monomial is $4R$; see Figure~\ref{fig:3D-barcode}.
  Note that the component leaves the quadratic and enters the linear era.
  \item[{\sc Step 11:}] the edge with filter value $11.0$ connects vertex $3$ to itself.
  Its addition combines the $2$-dimensional structures from Step~10 to form a bigger, $3$-dimensional structure.
  Indeed, the periodicity lattice is spanned by vectors $(1,1,0)$, $(0,2,0)$, and $(0,0,1)$, whose unit cell has $3$-dimensional volume $2$.
  The fact that the volume is $2$ and not $1$ suggests that the two strands of the double helix are still in separate components.
  The corresponding shadow monomial is $2$, so the  component leaves the linear and enters the constant era.
  \item[{\sc Steps 12 and 13:}] until now, vertex $1$ was isolated, with its own beam in the periodic merge tree.
  The first edge with endpoints $1$ and $2$ merges the two components, keeping the larger of the two periodicity lattices unchanged, but assigning it now to beam $1$, by the elder rule. 
  The second such edge forms a loop with drift vector $(1,0,0)$.
  We use the reduction algorithm to select a basis from the three vectors spanning the lattice constructed in Step~11 and the drift vector of the new loop.
  The result is $\Lambda_1 = \Lambda((1,0,0),(0,1,0),(0,0,1))$, and the shadow monomial is $1$, so this step moves the one remaining component from a first to the second and final epoch in the constant era; see the right end of the tree in the top panel of Figure~\ref{fig:3D-barcode}.
\end{description}
\begin{figure}[hbt]
  \centering \vspace{0.1in}
  \resizebox{!}{4.2in}{\input{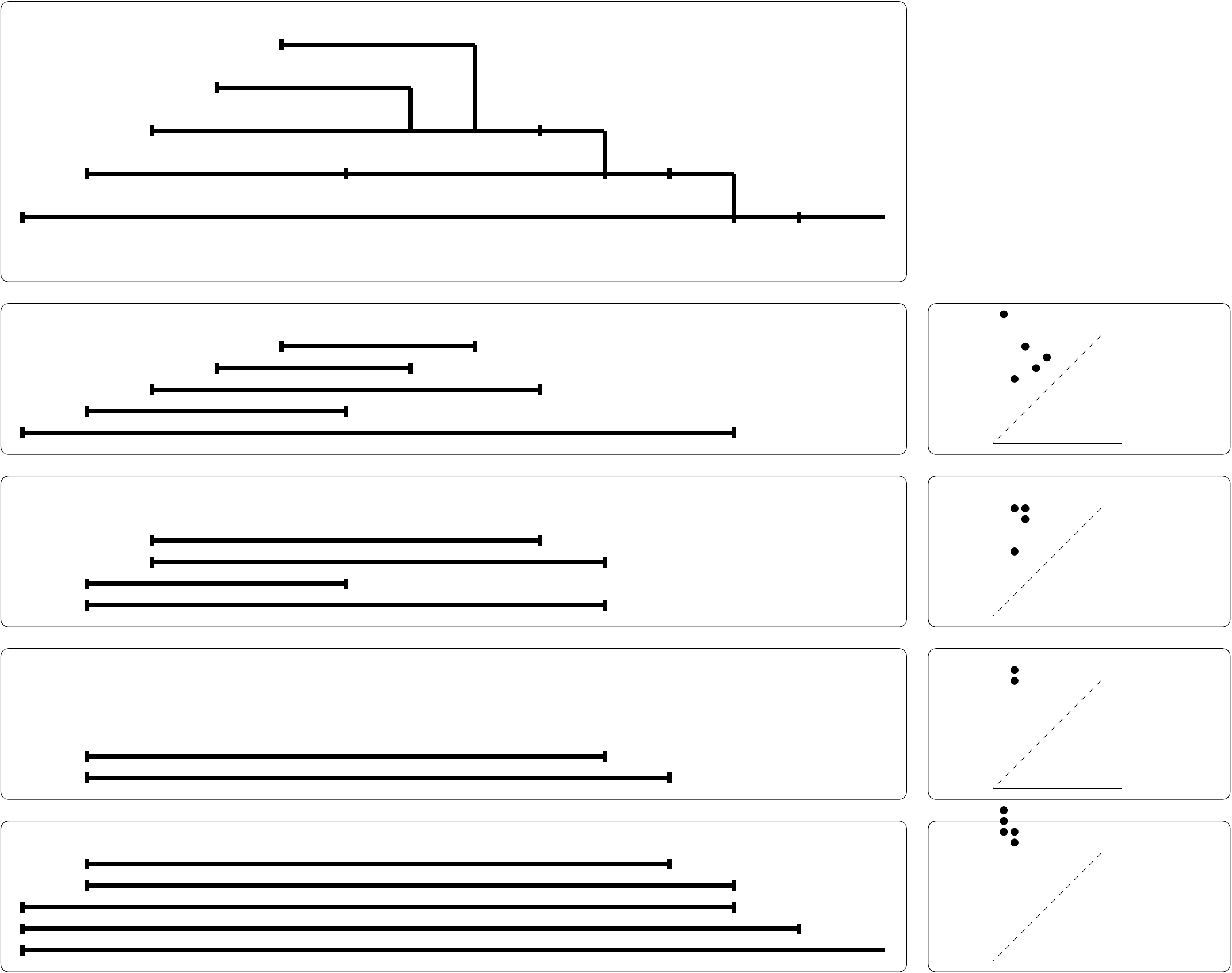_t}}
  \vspace{-0.0in}
  \caption{\footnotesize \emph{Top:} the periodic merge tree of the filtered graph in Figure~\ref{fig:3D-graph}.
  The left endpoints of the beams correspond to vertices of the quotient graph, and the vertical segments correspond to edges in its spanning tree.
  The other edges form loops and thus do not affect the set of components but possibly alter the shadow monomials, which decorate the beams.
  \emph{Below the top on the left:} the four collections of bars labeled with their possibly negative multiplicities.
  \emph{On the right:} the corresponding four persistence diagrams with similarly labeled points.
  The bottommost constant diagram has a point with infinite vertical coordinate, which is drawn above its window.}
  \label{fig:3D-barcode}
\end{figure}

\noindent \emph{Remark about stability.}
If we alter the value of the second vertex from $2$ to $1+\ee$, for an arbitrarily small $\ee > 0$, the periodic merge tree and periodic barcode do not change combinatorially, but if we alter it to $1-\ee$, then this vertex swaps with the current first vertex and thus changes the combinatorics of the tree and barcode.
While the adjustment from $1+\ee$ to $1-\ee$ mostly affects birth values in the arbitrarily small interval between these value, there is one change worth noting:
the two epochs with shadow monomial $2$---which belong to different beams in the tree shown in Figure~\ref{fig:3D-barcode}---merge into one epoch on one beam.
Therefore, the points $(2,12)$ with multiplicity $2$ and $(1,12)$ with multiplicity $-2$ disappear.
This illustrates the importance of negative multiplicities, which are essential for stability as they facilitate the matching of such pairs using the alternating $1$-Wasserstein distance.

\section{Discussion}
\label{sec:7}

The main contribution of this paper is the extension of the persistent homology framework to the periodic setting, albeit only for connected components; that is: for homological dimension $0$.
The crucial ingredient in this extension is the notion of a \emph{shadow monomial}, which quantifies the growth-rate and density of the translates of a component.
The two new data structures are the \emph{periodic merge tree} and the \emph{periodic $0$-barcode}, with the former containing strictly more information and the latter offering computational advantages and an easier connection to machine learning tools;
see the text by Peyr\'{e} and Cuturi~\cite{PeCu19} for optimal transport algorithms and their application to data science.
Both data structures are invariant under isometries and a change of basis, which can be seen directly from the definition of the shadow monomial. 
While the periodic merge tree splinters when we pass to a sublattice of the original lattice, the periodic $0$-th barcode is invariant under such changes, and so are the equivalence classes of periodic merge trees. 
Both data structures are stable under perturbations.

\smallskip
The reported work suggests several avenues of future study aimed at broadening and deepening the capabilities of the method and thus make it more compelling in applications to the sciences.
\smallskip \begin{itemize}
  \item For the sake of convenience, we restricted ourselves to $d$-periodic filtrations in $\Rspace^d$. 
  However, the theory extends to the setting of $k$-periodic filtrations in $\Rspace^d$, by replacing all $d$ by $k$ in the definition of the shadow monomial.
  The non-periodic case, $k=0$, then yields shadow monomials equal to $1$ and thus the classical merge tree and $0$-th persistent homology.
  To what extent can the theory, and in particular Definition~\ref{dfn:shadow_monomial} and Lemma~\ref{lem:counting_inside_sphere}, be further extended to groups acting on topological spaces different from $\Rspace^d$?
  \item Further extend the results of this paper to homological dimensions beyond $0$.
  Some such capability can already be achieved by applying the periodic merge tree to the dual filtration, describing the complements of the original filtration.  
  While this idea is not new \cite{DeEd95}, a key difference is that the infinite periodic space
  is not compact, and therefore Alexander duality does not hold in the usual way.
  As a consequence, we not only get information about $(d-1)$- but also about $(d-2)$-dimensional homology.
  In $\Rspace^3$, $2$-dimensional homology (voids) of the original filtration corresponds to the cubic eras of the dual filtration.
  In contrast, the quadratic eras, 
  for example, describe infinitely long tunnels, so a particular kind of $1$-dimensional homology.
  \item Assuming success in extending periodic barcodes to higher homological dimension, it will be interesting to combine the periodic with the chromatic setting recently developed in \cite{CDES24}.
  Indeed, crystalline materials are periodic and typically consist of more than one type of atoms, which suggests both concepts in its analysis. 
\end{itemize} \smallskip
We conclude this paper with a concrete geometric question about the Delaunay triangulation of a locally finite $\Lambda$-periodic set in $\Rspace^d$:
does there exist a constant $D = D(d)$ and a basis of $\Lambda$ such that the shift vector of every edge in the Delaunay triangulation has magnitude at most $D$?
We recall that this constant is $D = 1$ if $\Lambda$ has a basis with pairwise orthogonal vectors, as proved in Section~\ref{sec:3.5}.
However, the proof does not extend to the case in which the basis vectors enclose arbitrary angles.
Is there a similar bound for nearly orthogonal bases constructed with the Lenstra–Lenstra–Lovász (LLL) lattice basis reduction algorithm?

\vspace{-0.0in}
\subsubsection*{Acknowledgements}
{\footnotesize
  The authors thank Georg Osang and Dmitriy Morozov for their motivating words, Alexey Garber, J\'{a}nos Pach, Morteza Saghafia, Nicolò Zava and Manuel Soriano Trigueros for technical discussions on the topic of this paper, and most of all Chiara Martyka for discussions on lattices and software implementing an earlier version of the algorithm.
}


\appendix

\clearpage
\section{Notation}
\label{app:N}

\begin{table}[h!]
  \centering \vspace{-0.1in}
  \begin{tabular}{ll}
     $\Rspace^d$
       &  $d$-dimensional Euclidean space \\
     $\sigma, \tau \in K$
       &  cells, cell complex \\
     $K_{t} = \filter^{-1} (-\infty, t]$
       &  sublevel set \\
     $\filter \colon K \to \Rspace$
       &  filter \\
     $\Height \colon \MTree{} \to \Rspace$
       &  height function on merge tree \\
     $\Frequency \colon \MTree{} \to \Monomial [R]$
       &  frequency function to the set of monomials \\
    \\
     $\Lambda = \Lambda(u_1,\ldots,u_d); U$
       &  lattice spanned by vectors; basis matrix \\
     $u, w \in \Lambda, \lambda_i \in \Zspace,$
       &  lattice points; integer coefficients \\
     $\Span{\Lambda}; \volume{p}{\Lambda}, \nu_p$
       &  linear space; volume of unit cell and unit ball \\
     $\filter/\Lambda \colon K/\Lambda \to \Rspace$
       &  quotient filter on quotient complex \\
     $\AAA, \BBB, \CCC$
       &  connected components \\
     $\aaa, \bbb, \ccc$
       &  corresponding shadows \\
     $\Lambda_\CCC, \Shadow{\CCC}{R}$
       &  periodicity lattice, shadow monomial \\
    \\
     $\Lambda_\CCC = \Lambda_\AAA + \Lambda_\BBB$
       &  sum, smallest common superlattice \\
     $r, s; x, y, z$
       &  roots; points, vertices, or nodes \\
     $\Root{x}, \Size{r}$
       &  root of component, \#vertices in component \\
     $\Next{x}$
       &  successor in linear list \\
     $a, \Shift{a}, \Drift{x}$
       &  arc, shift vector, drift vector of path from root \\
     $n, m$
       &  number of vertices, edges in quotient complex \\
     $V$; $v_1, \ldots, v_p$
       &  integer basis matrix; integer vectors \\
     $M[i,j]; \norm{M}_\infty; \multiplicity$
       &  integer basis matrix; max magnitude; multiplicity \\
    \\
     $\MTree = \perMTree{\filter}{\Lambda}$
       &  periodic merge tree \\
     $\MTree, \MTree', \MTree''$
       &  splintering periodic merge trees \\
     $\NTree, \NTree', \OTree$
       &  extra periodic merge trees \\
     $\varphi, \psi; \omega$
       &  maps between periodic merge trees \\
     $\MTree \simeq \MTree', [\MTree]$
       &  equivalence between, class of periodic merge trees\\
     $\Idist{\MTree}{\MTree'}$
       &  interleaving distance \\
     $\Jdist{[\MTree]}{[\MTree']}$
       &  interleaving pseudo-distance \\
     \\
     $\xi, \eta, \zeta \colon \Rspace \times \barRspace \to \Rspace$
       &  multiplicity functions \\
     $W_1(\xi, \eta), W_1^{\pm}(\xi, \eta)$
       &  (alternating) $1$-Wasserstein distance \\
     $\Tplan, \Xplan, \Yplan$
       &  transportation plans used in the optimization \\
     $\xi = \xi^+ - \xi^-$
       &  partition into positive, negative multiplicities \\
     $\BCode = \perBCode{\filter}{\Lambda}$
       &  periodic $0$-th barcode \\
     $\multiplicity_0, \multiplicity( \filter, \Lambda )$
       &  upper bound, maximum multiplicity of bars
  \end{tabular}
  \caption{Notation used in the paper.}
  \label{tbl:Notation}
\end{table}


\begin{thebibliography}{21}

{\footnotesize

\bibitem{Ada17}
{\sc H.\ Adams et al.}
Persistence images: a stable vector representation of persistent homology.
\emph{J.\ Machine Learning Res.} {\bf 18} (2017), 1--35.

\bibitem{BBI01}
{\sc D.\ Burago, Y.\ Burago and S.\ Ivanov.}
\emph{A Course in Metric Geometry.}
Amer.\ Math.\ Soc., Providence, Rhode Island, 2001.

\bibitem{Car09}
{\sc G.\ Carlsson.}
Topology and data.
\emph{B.\ Am.\ Math.\ Soc.} {\bf 46} (2009), 255--308.

\bibitem{Cas97}
{\sc J.W.S.\ Cassels.} 
\emph{An Introduction to the Geometry of Numbers.}
Classics Math., Springer, Berlin, 1997.

\bibitem{CEM06}
{\sc D.\ Cohen-Steiner, H.\ Edelsbrunner, and D.\ Morozov.}
Vines and vineyards by updating persistence in linear time.
\emph{In} ``Proc.\ 22nd Ann.\ Sympos.\ Comput.\ Geom., 2006'', 119--126. 

\bibitem{CoMe91}
{\sc E.\ Cohen and N.\ Megiddo.}
Recognizing properties of periodic graphs.
In \emph{Applied Geometry and Discrete Mathematics}, DIMACS Ser.\ Discrete Math.\ Theoret.\ Comput.\ Sci.\ {\bf 4}, 135--146, 1990.

\bibitem{CDES24}
{\sc S.\ Cultrera di Montesano, O.\ Draganov, H.\ Edelsbrunner, and M.\ Saghafian.} 
Chromatic alpha complexes.
\texttt{arXiv:2212.03128} \texttt{[math.AT]}, 2024. 

\bibitem{DeEd95}
{\sc C.J.A.\ Delfinado and H.\ Edelsbrunner.}
An incremental algorithm for Betti numbers of simplicial complexes on the 3-sphere. 
\emph{Comput.\ Aided Geom.\ Design} {\bf 12} (1995), 771--784.

\bibitem{DiDu89}
{\sc W.\ Dicks and M.J.\ Dunwoody.}
\emph{Groups Acting on Graphs.}
Cambridge Univ.\ Press, Cambridge, England, 1989.

\bibitem{DoHu97}
{\sc N.P.\ Dolbilin and D.H.\ Huson.}
Periodic Delone tilings.
\emph{Period.\ Math.\ Hung.} {\bf 34} (1997), 57--64.

\bibitem{EdHa10}
{\sc H.\ Edelsbrunner and J.L.\ Harer.}
\emph{Computational Topology. An Introduction.}
American Mathematical Society, Providence, Rhode Island, 2010.

\bibitem{EHKSW21}
{\sc H.\ Edelsbrunner, T.\ Heiss, V.\ Kurlin, P.\ Smith and M.\ Wintraecken.}
The density fingerprint of a periodic point set. 
\emph{In} ``Proc.\ 37th Ann.\ Sympos.\ Comput.\ Geom., 2021'', 32:1--32:16. 

\bibitem{Fed91}
{\sc E.S.\ Fedorov.}
Symmetry of regular systems of figures.
\emph{Zap.\ Mineral.\ Obch.} {\bf 28} (1891), 1--146.

\bibitem{GaIt91}
{\sc Z.\ Galil and G.\ Italiano.}
Data structures and algorithms for disjoint set union problems.
\emph{ACM Computing Surveys} {\bf 23} (1991), 319--344.

\bibitem{HMM98}
{\sc G.\ Havas, B.S.\ Majewsky and K.R.\ Mathews.}
Extended gcd and Hermite normal form algorithms via lattice basis reduction.
\emph{Exp.\ Math.} {\bf 7} (1998), 125--136.

\bibitem{HNHEMN16}
{\sc Y.\ Hiraoka, T.\ Nakamura, A.\ Hirata, E.G.\ Escolar, K.\ Matsue and Y.\ Nishiura.}
Hierarchical structures of amorphous solids characterized by persistent homology.
\emph{PNAS} {\bf 113} (2016), 7035--7040.

\bibitem{How95}
{\sc N.R.\ Howes.}
\emph{Modern Analysis and Topology.}
Univeritext, Springer-Verlag, Berlin and New York, 1995.

\bibitem{KaBa79}
{\sc R.\ Kannan and A.\ Bachem.}
Polynomial algorithms for computing the Smith and Hermite normal forms of an integer matrix.
\emph{SIAM J.\ Comput.} {\bf 8} (1979), 499--507.

\bibitem{KlRe23}
{\sc K.-M.\ Klein and J.\ Reuter.}
Simple lattice basis computation---the generalization of the Euclidean algorithm.
\texttt{arXiv:2311.15902} \texttt{[cs.DS]}, 2023.

\bibitem{KHM20}
{\sc A.S.\ Krishnapriyan, M.\ Haranczyk and D.\ Morozov.}
Topological descriptors help predict guest adsorption in nanoporous materials.
\emph {J.\ Phys.\ Chem.\ C} {\bf 124} (2020), 9360--9368.

\bibitem{Kru56}
{\sc J.B.\ Kruskal.}
On the shortest spanning subtree of a graph and the traveling salesman problem. 
\emph{Proc.\ Amer.\ Math.\ Soc.} {\bf 7} (1956), 48--50.

\bibitem{Mad00}
{\sc A.\ Mader.}
\emph{Almost Completely Decomposable Groups.}
Gordon and Breach, Amsterdam, Netherlands, 2000.


\bibitem{MBW13}
{\sc D.\ Morozov, K.\ Beketayev and G.\ Weber.}
Interleaving distance between merge trees.
\emph{Discrete Comput.\ Geom.} {\bf 49} (2013), 22--45.

\bibitem{OnRo22}
{\sc A.\ Onus and V.\ Robins.}
Quantifying the homology of periodic cell complexes.
\texttt{arXiv:2208.09223} \texttt{[math.AT]}, 2022.

\bibitem{PeCu19}
{\sc G.\ Peyr\'{e} and M.\ Cuturi.}
\emph{Computational Optimal Transport: With Applications to Data Science.}
Foundations and Trends in Machine Learning {\bf 11}, Now Publisher, Boston--Delft, 2019.


\bibitem{Sch98}
{\sc A.\ Schrijver.}
\emph{Theory of Linear and Integer Programming.}
Wiley-Interscience, Chichester, England, 1998.

\bibitem{SkTu20}
{\sc P.\ Skraba and K.\ Turner.}
Wasserstein stability for persistence diagrams.
\texttt{arXiv:2006.16824} \texttt{[math.AT]}, 2020.

\bibitem{SmMo20}
{\sc D.\ Smirnov and D.\ Morozov.}
Triplet merge trees.
In \emph{Topological Methods in Data Analysis and Visualization V} (TopoInVis'17), 2020.

\bibitem{Sto00}
{\sc A.\ Storjohann.}
\emph{Algorithms for Matrix Canonical Forms.}
Ph.D.\ thesis ETH No.\ 13922, Swiss Federal Institute of Technology, Zurich, Switzerland, 2000.

\bibitem{TASMH23}
{\sc S.S.\ Thygesen, A.I.\ Abrikosov, P.\ Steneteg, T.\ Bin Masood and I.\ Hotz.}
Level of detail visual analysis of structures in solid-state materials.
\emph{In} ``Proc.\ 25th EG Conf.\ Visualization, 2023'', 55--59.

\bibitem{Wei11}
{\sc S.\ Weinberger.}
What is \ldots persistent homology?
\emph{Notices Amer.\ Math.\ Soc.} {\bf 58} (2011),36--39.

\bibitem{Zhi15}
{\sc B.\ Zhilinskii.}
\emph{Introduction to Louis Michel's Lattice Geometry Through Group Action.}
EDP Science, CNRS edition, Paris, France, 2015.

\bibitem{CGAL}
{\sc The CGAL Project.}
\emph{CGAL, Computational Geometry Algorithms Library.}
\texttt{https://www.cgal.org}.

}
\end{thebibliography}
\end{document}